\documentclass[10pt,reqno]{amsart}

\usepackage{amssymb}
\usepackage{graphicx}
\usepackage{amsfonts}
\usepackage{amsmath}
\usepackage{amsthm}
\usepackage{fancyhdr}
\usepackage{indentfirst}
\usepackage{hyperref}
\usepackage{comment}
\usepackage{color}
\usepackage{subcaption}
\usepackage{epsfig}
\usepackage{pst-grad} 
\usepackage{pst-plot} 
\usepackage[space]{grffile} 
\usepackage{etoolbox} 
\usepackage{mathrsfs} 
\usepackage{mathtools} 

\newtheorem{theorem}{Theorem}[section]
\newtheorem{lemma}[theorem]{Lemma}

\theoremstyle{definition}

\newtheorem{proposition}[theorem]{Proposition}

\theoremstyle{remark}
\newtheorem{remark}[theorem]{Remark}

\numberwithin{equation}{section}

\mathtoolsset{showonlyrefs=true}



\newcommand{\ep}{\varepsilon}

\newcommand{\R}{\mathbb{R}}
\newcommand{\N}{\mathbb{N}}

\newcommand{\Sph}{\mathbb{S}}


\newcommand{\Hcal}{\mathcal{H}}


\newcommand{\Gr}{\mathbf{G}}

\newcommand{\bV}{\mathbf{V}}

\newcommand{\bIV}{\mathbf{IV}}





\newcommand{\lc}{%
  \,\raisebox{-.127ex}{\reflectbox{\rotatebox[origin=br]{-90}{$\lnot$}}}\,%
}

\newcommand{\mres}{\mathbin{\vrule height 1.6ex depth 0pt width
0.13ex\vrule height 0.13ex depth 0pt width 1.3ex}}




\newcommand{\spt}{\operatorname{spt}}
\newcommand{\dist}{\operatorname{dist}}
\newcommand{\Div}{\operatorname{div}}

\newcommand{\inj}{\operatorname{inj}}
\newcommand{\Hess}{\operatorname{Hess}}

\begin{document}

\title[Boundary behavior of limit-Interfaces]{Boundary behavior of limit-Interfaces for the Allen-Cahn equation on Riemannian manifolds with Neumann boundary condition}

\author[Martin Li]{Martin Man-chun Li}
\address{Department of Mathematics, The Chinese University of Hong Kong, Shatin, N.T., Hong Kong}
\email{martinli@math.cuhk.edu.hk}

\author[Davide Parise]{Davide Parise}
\address{Department of Mathematics, University of California San Diego, 9500 Gilman Drive \#0112, La Jolla, CA 92093-0112, USA}
\email{dparise@ucsd.edu}

\author[Lorenzo Sarnataro]{Lorenzo Sarnataro}
\address{Department of Mathematics, Princeton University, Fine Hall, Washington Road, Princeton, NJ 08544-1000 USA}
\email{lorenzos@princeton.edu}

\begin{abstract}
We study the boundary behavior of any limit-interface arising from a sequence of general critical points of the Allen-Cahn energy functionals on a smooth bounded domain. Given any such sequence with uniform energy bounds, we prove that the limit-interface is a free boundary varifold which is integer rectifiable up to the boundary. This extends earlier work of Hutchinson and Tonegawa on the interior regularity of such limit-interface. A key novelty in our result is that no convexity assumption of the boundary is required and it is valid even when the limit-interface clusters near the boundary. Moreover, our arguments are local and thus works in the Riemannian setting. This work provides the first step towards the regularity theory for the Allen-Cahn min-max theory for free boundary minimal hypersurfaces, which was developed in the Almgren-Pitts setting by the first-named author and Zhou.
\end{abstract}

\maketitle

\section{Introduction}
\label{S:intro}

Let $\Omega \subset \R^n$, $n \geq 2$, be a smooth bounded domain. For each small parameter $\ep >0$, consider the \emph{Allen-Cahn energy functional} arising from the van der Waals-Cahn-Hilliard theory of phase transition \cite{Cahn-Hilliard58}:
\begin{equation}
\label{E:CH-energy}
E_\ep(u):=\int_\Omega \frac{\ep|\nabla u|^2}{2} + \frac{W(u)}{\ep} 
\end{equation}
where $u: \Omega \to \R$ belongs to the Sobolev space $H^1(\Omega)=\{u \in L^2(\Omega):\nabla u \in L^2(\Omega)\}$. Here, $W: \R \to \R$ is a nonnegative double-well potential with strict minima at $\pm 1$ with $W(\pm 1)=0$. If $u \in H^1(\Omega)$ is a critical point of $E_\ep$, by standard elliptic theory $u \in C^3(\overline{\Omega})$ and satisfies the Euler-Lagrange equation:
\begin{equation}
\label{E:Allen-Cahn}
\left\{      
\begin{array}{cl}
-\ep^2 \Delta u + W'(u)=0 & \text{in $\Omega$}\\
\partial u / \partial \nu=0 & \text{on $\partial \Omega$}
\end{array}
\right. 
\end{equation}
where $\nu$ is the outward (with respect to $\Omega$) unit normal of $\partial \Omega$. In the literature, (\ref{E:Allen-Cahn}) is often known as the \emph{Allen-Cahn equation}. This equation and its parabolic counterpart (see e.g. \cite{Rubinstein1989}) account for important models in phase transitions and evolution of antiphase boundaries \cite{Allen-Cahn79}.

There is an extensive literature on the semi-linear elliptic equation \eqref{E:Allen-Cahn}. Much of the development was centered around a celebrated conjecture of De Giorgi concerning entire (i.e. $\Omega = \mathbb{R}^n$) solutions. We refer the interested reader to an excellent recent survey \cite{Chan2018} about this conjecture and the recent developments.

The Allen-Cahn equation is closely related to the theory of minimal hypersurfaces. This connection is particularly important when one considers the asymptotic behavior of solutions to the Allen-Cahn equation as $\ep \to 0^+$. Heristically, the level sets of solutions $u_\ep$ to the Allen-Cahn equation will converge in some sense to a sharp limit interface which is an embedded minimal hypersurface (which may possess singularity). For minimizing solutions, this was made precise using De Giorgi's $\Gamma$-convergence theory by Modica \cite{Modica1987a} and Kohn-Sternberg \cite{Kohn1989}. They further used this idea to show the existence of stable solutions for \eqref{E:Allen-Cahn} in two-dimensional non-convex domains such as a dumbbell. The existence of unstable solutions in two-dimensions was later studied by Kowalczyk \cite{Kowalczyk2005}. Sternberg and Zumbrun in \cite{Sternberg1998} proved the connectivity of the phase boundaries in strictly convex domains. 

The interior regularity of \eqref{E:Allen-Cahn} has been treated in a series of important papers by Padilla-Tonegawa \cite{Padilla1998}, Hutchinson-Tonegawa \cite{Hutchinson2000}, Tonegawa \cite{Tonegawa2005a} and Tonegawa-Wickramasekera \cite{Tonegawa2012}. We remark that these results are foundational in a recent beautiful work of Guaraco \cite{Guaraco2018} on the min-max theory for minimal hypersurfaces in closed Riemannian manifolds using the Allen-Cahn approach. This approach is especially successful when the ambient space has dimension two and three, as seen in the beautiful work of Chodosh and Mantoulidis \cite{Mantoulidis2021,Chodosh2023,Chodosh2020}. 

Although we have by now a rather satisfactory interior theory, much less is known about the boundary behavior of \eqref{E:Allen-Cahn} as $\ep \to 0^+$. One major obstacle is to understand the phenomenon of boundary concentration. When $\Omega$ is the unit ball, Malchiodi, Ni and Wei \cite{Malchiodi2007a} constructed radially symmetric solutions $u_\ep$ to \eqref{E:Allen-Cahn} whose interfaces clustered towards $\partial \Omega$. Malchiodi and Wei \cite{Malchiodi2007} removed the symmetry assumption and showed that such boundary clustering can occur for any mean-convex domain $\Omega \subset \mathbb{R}^n$. These results show that the limit-interface can stick to the boundary even when $\Omega$ is \emph{convex}.

Our goal is to fill in this gap in the literature and, in subsequent works, to generalize the Allen-Cahn min-max theory to the free boundary setting. More precisely, in this paper we study the boundary behavior of the limit-interface arising from general critical points of the Allen-Cahn energy functional $E_\ep$ in \eqref{E:CH-energy}, i.e. solutions satisfying \eqref{E:Allen-Cahn}, subject only to a uniform energy bound. In particular, we do not assume that $u_\ep$ is \emph{minimizing} or \emph{stable}. We show that the limit-interface is \emph{integer rectifiable} up to the boundary (i.e. when regarded as an $(n-1)$-varifold \emph{in $\R^n$}). Moreover, the limit varifold is \emph{stationary with free boundary}, hence a critical point of the area functional with respect to variations preserving the boundary $\partial \Omega$ as a set (we note that a different Dirichlet boundary value problem, related to Plateau's problem, was recently studied by Guaraco and Lynch \cite{Guaraco})  This provides the optimal regularity of the limit-interface up-to-the-boundary, extending the interior regularity result of Hutchinson-Tonegawa \cite{Hutchinson2000} to the free boundary setting.



We now explain in more details what the key difficulties in extending the interior regularity theory to the boundary are and how we overcome these new difficulties. One of the main ingredients in the regularity theory for general critical points is an almost monotonicity formula. The interior monotonicity-type formula was obtained in \cite[Proposition 3.4]{Hutchinson2000}. Using a maximum principle argument and a reflection process as in \cite{Grueter1986}, Tonegawa \cite[Theorem 3.7]{Tonegawa2003} proved a \emph{global} monotonicity formula up to the boundary provided that $\Omega$ is \emph{convex}. One of our main results is a boundary monotonicity-type formula, Proposition \ref{P:almost-monotonicity}, which differs from that of \cite{Tonegawa2003} in two respects. First, our result is local and does not require the convexity of $\partial \Omega$. Without the convexity assumption, we can only established a weaker pointwise upper bound (c.f. Proposition \ref{P:xi-bound}) for the discrepancy function compared to the interior case \cite[Proposition 3.3]{Hutchinson2000} and the global case in convex domains \cite[Proposition 3.1]{Tonegawa2003}. Nonetheless, our weaker bound, when combined with suitable integral estimates (c.f. Proposition \ref{P:xi-integral-bound}), is sufficient to establish the desired local monotonicity formula up to the boundary. We would like to point out that a similar upper bound for the discrepancy function in the interior was also observed with variable chemical potential lying in certain Sobolev spaces \cite{Tonegawa2005}. Second, the monotonicity formula in \cite{Tonegawa2003} contains terms integrated on balls reflected across the boundary as in \cite{Grueter1986}, which sometimes make it more difficult to apply. Our monotonicity formula does not involve any reflection and thus can be more readily used in applications. Note that a similar monotonicity formula for free boundary stationary varifolds was obtained by the first-named author with Guang and Zhou in \cite{Guang2020}. It would be possible---though we chose not to pursue it here---to use a reflection technique similar to the one adopted in \cite{Grueter1986} and \cite{Tonegawa2003}, and obtain a monotonicity formula involving the energy density on balls reflected across the boundary, resembling that of \cite{Tonegawa2003}.

Without considering reflected balls, another technical difficulty is that our monotonicity type formula only applies to balls that are either disjoint from the boundary or centered exactly at the boundary. This makes the arguments much more delicate when applying the monotonicity formula. For example, in contrast to the interior theory \cite{Hutchinson2000}, we can only establish uniform density ratio upper bound up to the boundary but not the lower bound (c.f. Proposition \ref{P:mu-density-bound}). This is related to the boundary clustering phenomena which is a major obstacle in our setting. Nonetheless, we can still prove the vanishing of the discrepancy measure (c.f. Theorem \ref{T:xi-vanish}) which allows us to obtain control on the first variation of the limit interface. Given the weaker control on the discrepancy function and the potential boundary clustering phenomena, we carefully modify the arguments in \cite{Hutchinson2000} to establish the up-to-the-boundary integer-rectifiability of the limit varifold.


The paper is organized as follows. In Section \ref{S:prelim}, we introduce the notations and some preliminary results that will be used throughout the rest of the paper. We then state our assumptions and the main result of this paper (Theorem \ref{T:main}) in Section \ref{S:main}. In Section \ref{S:monotonicity}, we derive our key monotonicity-type formula and several of its consequences. Much of the work is devoted to proving pointwise and integral estimates on the discrepancy function. Using our monotonicity formula, we then derive at the diffuse level (i.e. $\ep>0$) uniform bounds on the energy density ratio for balls that are either centered at a boundary point or completely disjoint from the boundary. In Section \ref{S:rectifiable}, we establish bounds on the density ratio for the limit measure and prove the vanishing of discrepancy measure, which implies the rectifiability of the limit interface. The last section, Section \ref{S:integrality}, shows further that the limit interface is in fact an integral varifold up to the boundary.

\vspace{1ex}

\noindent \textbf{Acknowledgement.} 
M. Li would like to express his deep gratitude to Prof. Richard Schoen for his constant support and encouragement. He would also like to thank Xin Zhou and Qi Ding for their interest of this work. The author is substantially supported by research grants from the Excellent Young Scientist Fund from the National Natural Science Foundation of China [Project No.: 12022120] and the Research Grants Council of the Hong Kong Special Administrative Region, China [Project No.: CUHK 14301319 and CUHK 14304121].
D. Parise would like to thank L. Spolaor for helpful discussions. L. Sarnataro is grateful to Fernando Codá Marques for his guidance. He would also like to thank Douglas Stryker, Otis Chodosh, and Daniel Stern for helpful discussions.

\section{Notations and preliminaries}
\label{S:prelim}

In this section, we introduce the notations and preliminary concepts that will be used throughout the rest of this paper.

\subsection{Ambient space}
\label{SS:ambient}

Let $(\widetilde{M},g)$ be a smooth complete $n$-dimensional Riemannian manifold \emph{without boundary}. We use $\dist_g$ and $\nabla$ to denote respectively the intrinsic distance and Levi-Civita connection on $\widetilde{M}$ induced by $g$. For any $x \in \widetilde{M}$, we denote by $\exp_x$ the exponential map and $\textrm{inj}_g(x)$ the injectivity radius of $(\widetilde{M},g)$ at $x$. We use $B_r(x)$ to denote the open geodesic ball of radius $r$ centered at $x \in \widetilde{M}$. For a given subset $A \subset \widetilde{M}$, we denote by $\mathbf{1}_A$ its indicator function; by $\overline{A}$ and $A^\circ$ respectively the closure and interior of $A$ with respect to the topology on $(\widetilde{M},g)$. Moreover, we set $\inj_g(A):=\inf_{x \in A} \inj_g(x)$ and $U_r(A):=\cup_{x \in A} B_r(x)$. For any linear transformations $A,B :T_x \widetilde{M} \to T_x \widetilde{M}$, we define the Frobenius (or Hilbert-Schmidt) inner product and norm by
\[ \langle A, B \rangle_g:= \sum_{i,j=1}^n A_{ij} B_{ij} \quad \text{ and } \quad |A|_g:=\sqrt{\langle A, A \rangle}\]
where $A_{ij}, B_{ij}$ are matrix components of $A, B$ under an orthonormal basis of $T_x \widetilde{M}$ with respect to $g$. Moreover, $\|A\|_g$ denotes the operator norm. We shall sometimes use $\cdot$ to denote the inner product $g$ on the tangent spaces of $\widetilde{M}$ or omit the subscript $g$ when there is no ambiguity. Vectors and co-vectors are identified via the metric $g$ wherever necessary.

Throughout this paper (see Section \ref{SS:assumption}), we assume $(\widetilde{M},g)$ has positive injectivity radius $\inj_g(\widetilde{M})\geq i_0>0$ and bounded sectional curvature, i.e. $|\sec_{(\widetilde{M},g)}| \leq k_0$, for some constant $k_0 \geq 0$. The classical Hessian comparison theorem implies (see \cite[Lemma 7.1]{Colding2011}) that the distance function $r(\cdot)=\dist_g(\cdot,x)$ from any $x \in \widetilde{M}$ satisfies
\begin{equation}
\label{E:Hessian-comparison}
\left| \Hess r (X,X) - \frac{1}{r} |X- (X \cdot \nabla r) \nabla r|^2 \right| \leq \sqrt{k_0}
\end{equation} 
provided that $|X| =1$ and $r < \min\{i_0,\frac{1}{\sqrt{k_0}}\}$. Taking the trace of \eqref{E:Hessian-comparison} along directions orthogonal to $\nabla r$ yields
\begin{equation}
\label{E:Laplacian-comparison}
\left| \Delta r - \frac{n-1}{r} \right| \leq (n-1) \sqrt{k_0}.
\end{equation}
Moreover, the Bochner formula \cite[Proposition 1.47]{Colding2011} implies that for any smooth function $f$ on subsets of $\widetilde{M}$,
\begin{equation}
\label{E:Bochner}
\left| \frac{1}{2} \Delta |\nabla f|^2 - |\Hess f|^2- \nabla f \cdot \nabla (\Delta f) \right| \leq (n-1) k_0 |\nabla f|^2.
\end{equation}
On the other hand, by Rauch comparison theorem, there exists a constant $r_0>0$ depending only on $n$ and $k_0$ such that for all $r<\min\{r_0,i_0\}$, we have
\begin{equation}
\label{E:ball-comparison}
\frac{1}{2} \omega_n r^n \leq  \textrm{Vol}(B_r(x)) \leq 2 \omega_n r^n,
\end{equation}
\begin{equation}
\label{E:sphere-comparison}
\frac{1}{2} n\omega_n r^{n-1} \leq  \textrm{Area}(\partial B_r(x)) \leq 2 n \omega_n r^{n-1}.
\end{equation}
Here, $\omega_k$ is the volume of the $k$-dimensional unit ball in $\mathbb{R}^k$ with respect to the Euclidean metric. 

Let $\Gr_x(n,k)$ be the space of unoriented $k$-dimensional subspaces of $T_x \widetilde{M}$. The orthogonal complement of $S \in \Gr_x(n,k)$ is denoted by $S^\perp \in \Gr_x(n,n-k)$. For $a \in T_x \widetilde{M}$, $a \otimes a :T_x \widetilde{M} \to T_x \widetilde{M} $ is the matrix with components $a_i a_j$ with respect to an orthonormal basis. For $S \in \Gr_x(n,k)$, we identify $S$ with the corresponding orthogonal projection of $T_x \widetilde{M}$ onto $S$. In the case of $k=n-1$, we also identify $S \in \Gr_x(n,n-1)$ with the unit vector $\pm N \in T_x \widetilde{M}$ which is perpendicular to $S$. Note that we may express the relation by $S=I- N \otimes N$ where $I$ is the identity map on $T_x \widetilde{M}$. The correspondence is a homeomorphism with respect to the naturally endowed topologies on $\Gr_x(n,n-1)$ and $\Sph^{n-1}/\{\pm 1\}$.

\subsection{Boundary geometry}
\label{SS:extrinsic-geom}

Let $\Omega \subset \widetilde{M}$ be an open and connected domain with smooth non-empty boundary $\partial \Omega$, whose outward unit normal is denoted by $\nu$. As in \cite{Brendle2016}, we define the local interior ball curvature at $x \in \partial \Omega$ by 
\[ \overline{Z}_\Omega (x):= \sup_{y \in \partial \Omega, 0 < \dist_g(x,y) \leq \frac{1}{2} \inj_g(\widetilde{M})} \left( - \frac{ 2 \exp^{-1}_x(y) \cdot \nu(x)}{\dist_g(x,y)^2} \right)\]
where $\exp^{-1}_x(y)$ is regarded as a vector in $T_x \widetilde{M}$ emanating from the origin. One can similarly define the local exterior ball curvature $\underline{Z}_\Omega(x)$ by considering the infimum instead. When $(\widetilde{M},g)$ is $\mathbb{R}^n$, this agrees with the notion of (global) interior and exterior ball curvature as given in \cite{Andrews2013} (c.f. \cite{Hirsch2020}). By considering $y \to x$, it is clear that $\underline{Z}_\Omega(x) \leq \lambda_1(x) \leq \cdots \leq \lambda_{n-1}(x) \leq \overline{Z}_\Omega (x)$ where $\lambda_i(x)$ are the principal curvatures of $\partial \Omega$ at $x$ with respect to the normal $\nu$. Note that $\overline{Z}_\Omega=\underline{Z}_\Omega=0$ when $\Omega$ is a half space in $\mathbb{R}^n$.

We will always assume that $\Omega$ has bounded ball curvatures, i.e. $\max\{|\overline{Z}_\Omega|, |\underline{Z}_\Omega|\} \leq \kappa_0$ for some constant $\kappa_0 \geq 0$. Therefore, the signed distance function $d(\cdot)=\dist_g(\cdot, \partial \Omega)$, with $d>0$ in $\Omega$, is smooth in the tubular neighborhood $U_{\frac{1}{2\kappa_0}}(\partial \Omega)$ satisfying the following \footnote{See \cite[Appendix 14.6]{Gilbarg2001} for the calculations in $\mathbb{R}^n$. The Riemannian case follows from standard Jacobi field estimates.} :
\begin{equation}
\label{E:dist-estimates}
|\nabla d| =1, \quad \Hess d(\nabla d, \cdot)=0 \quad \text{ and } \quad \| \Hess d\| \leq C_1
\end{equation}
where $C_1>0$ is a constant depending only on $n$, $k_0$ and $\kappa_0$. Moreover, for any $x \in \partial \Omega$, the radial distance function $r(\cdot)=\dist_g(\cdot,x)$ restricted to $\partial \Omega$ satisfies
\begin{equation}
\label{E:allard-estimate}
|r \nabla r \cdot \nu | \leq \frac{\kappa_0}{2} r^2
\end{equation}
provided that $r<\frac{1}{2} \inj_g(\widetilde{M})$. This follows from the definition and the observation that $\exp^{-1}_x y$ is the parallel transport of $r \nabla r$ at $y$ back to $x$ along a radial geodesic. It is also easily seen from the definition that there exists a constant $r_1>0$ depending only on $n$, $k_0$ and $\kappa_0$ such that for any $x \in \partial \Omega$, $r<\min\{r_1,i_0\}$, we have 
\begin{equation}
\label{E:half-ball-comparison}
\frac{1}{3} \omega_n r^n \leq  \textrm{Vol}(B_r(x) \cap \Omega) \leq \frac{2}{3} \omega_n r^n,
\end{equation}
\begin{equation}
\label{E:half-sphere-comparison}
\frac{1}{3} n \omega_n r^{n-1} \leq  \textrm{Area}(\partial B_r(x) \cap \Omega) \leq  \frac{2}{3} n \omega_n r^{n-1}.
\end{equation}

\subsection{Functions, vector fields and measures}
\label{SS:notation}

Let $U \subset \widetilde{M}$ be an open subset. We use $C_c(U)$ to denote the space of real-valued continuous functions on $U$ with compact support; $L^p(U)$ to denote the space of functions on $U$ which are in $L^p$ with respect to the volume measure $dv_g$ on $(\widetilde{M},g)$. For any subset $A \subset \widetilde{M}$, we let $C^k(A)$ be the space of all functions defined on $A$ which can be extended to a $C^k$ function on an open subset $U \subset \widetilde{M}$ containing $A$. We will use $f^+:=\max\{f,0\}$ and $f^-:=\max\{-f,0\}$ to denote respectively the positive and negative part of a real-valued function $f$. 

Let $\mathfrak{X}_c(U)$ be the space of smooth, compactly supported vector fields on $U$; and $\mathfrak{X}^{tan}_c(U)$ be the subspace consisting of all $X \in \mathfrak{X}_c(U)$ such that $X(x) \in T_x \partial \Omega$ for all $x \in \partial \Omega \cap U$. Note that $\mathfrak{X}^{tan}_c(U)=\mathfrak{X}_c(U)$ if $\partial \Omega \cap U = \emptyset$. A function $u \in L^1(U)$ is said be in $BV(U)$ (see \cite{Evans2015}, and also \cite{Miranda2007} in the Riemannian setting) if 
\[ \sup \left\{ \int_U u \Div_g X \; dv_g \; : \; X \in \mathfrak{X}_c(U), \; |X| \leq 1\right\} < +\infty \]
where $\Div_g$ is the divergence operator with respect to $g$. For any $u \in BV(U)$, we use $Du$ to denote its distributional derivative which is a vector-valued Radon measure on $U$. The variation measure is then denoted by $\|Du\|$. A subset $A \subset U$ is said to have finite perimeter in $U$ if $\mathbf{1}_A \in BV(U)$, and we call $\|\partial A\|:=\|D \mathbf{1}_A\|$ the perimeter measure and $\|\partial A\|(U):=\|D \mathbf{1}_A\|(U)$ the perimeter of $A$ in $U$. We use $\partial^* A$ to denote the reduced boundary and $\nu_{\partial^* A}$ the generalized outward normal (c.f. \cite[Section 5.7]{Evans2015}). These concepts can be defined locally, as in \cite[Definition 5.2]{Evans2015}.

For any $0 \leq k \leq n$, the $k$-dimensional Hausdorff measure on $(\widetilde{M},g)$ is denoted by $\mathcal{H}^k$. Note that $\mathcal{H}^n$ agrees with the volume measure $dv_g$ on $(\widetilde{M},g)$. Given a subset $A \subset U$ and a measure $\mu$ on $U$, the restriction of $\mu$ to $A$ is denoted by $\mu \mres A$. For any Radon measure $\mu$ on $U$ and $\phi \in C_c(U)$ we often write $\mu(\phi)$ for $\int \phi d\mu$. A sequence $\{\mu_i\}_{i \in \mathbb{N}}$ of Radon measures on $U$ is said to converge (in the weak-$*$ topology) to a Radon measure $\mu$ on $U$ if $\mu_i(\phi) \to \mu(\phi)$ for all $\phi \in C_c(U)$. We will denote this weak convergence by $\mu_i \to \mu$. We omit $d\mu$ if $\mu$ is the volume measure $dv_g$. We write $\spt \mu$ for the support of $\mu$, which is the smallest relatively closed subset $V \subset U$ such that $\mu(U\setminus V)=0$. Thus $x \in \spt \mu$ if and only if $\mu(B_r(x)) >0$ for all $r>0$. For any $x \in U$, we denote the $k$-dimensional density of $\mu$ at $x$ as
\begin{equation}
\label{E:density}
\Theta^{k}(\mu,x):=\lim_{r \to 0} \frac{\mu(B_r(x))}{\omega_k r^k} 
\end{equation} 
provided that the limit exists. Clearly $\Theta^k(\mu,x)=0$ for all $x \notin \spt \mu$. Therefore, we often regard $\Theta^k(\mu,\cdot)$ as a function defined on $\spt \mu$.

\subsection{Varifolds}
\label{SS:varifolds}

We recall some definitions from geometric measure theory and refer the readers to the standard references \cite{Allard1972,Simon1983} for more details. A nice description of varifolds in Riemannian manifolds without using isometric embedding can be found in \cite{Scharrer2022}. We shall only consider codimension one varifolds in this paper. 

For any open set $U \subset \widetilde{M}$, let $G(U):=\cup_{p \in U} \Gr_p(n,n-1)$. A general varifold in $U$ is a Radon measure on $G(U)$. We denote the set of all general varifolds in $U$ by $\bV_{n-1}(U)$, equipped with the weak topology. For $V \in \bV_{n-1}(U)$, let $\|V\|$ be the weight of $V$, which is the Radon measure on $U$ defined by
\[ \|V\| (\phi) :=\int_{G(U)} \phi(x) \; dV(x,S), \qquad \forall \phi \in C_c(U).\]
We say $V \in \bV_{n-1}(U)$ is rectifiable if there exists an $\Hcal^{n-1}$-measurable, countably $(n-1)$-rectifiable subset $\Sigma \subset U$ and a locally $\Hcal^{n-1}$-integrable function $\Theta$ defined on $\Sigma$ such that 
\[ V(\phi)=\int_\Sigma \phi(x,T_x \Sigma) \; \Theta(x)  d\Hcal^{n-1}(x) \qquad \forall \phi \in C_c(G(U)).\]
Here $T_x \Sigma \in \Gr_x(n,n-1)$ is the unique approximate tangent space of $\Sigma$ at $x$ which exists $\Hcal^{n-1}$-a.e. on $\Sigma$. Note that $\Theta^{n-1}(\|V\|,x)=\Theta(x)$ for any such $x$. A rectifiable varifold $V$ is uniquely determined by its weight $\|V\|=\Theta \Hcal^{n-1} \mres \Sigma$ through the formula above. For this reason, we naturally say a Radon measure $\mu$ on $U$ is rectifiable when one can associate a rectifiable varifold $V$ such that $\|V\|=\mu$. If $\Theta(x) \in \N$ for $\Hcal^{n-1}$-a.e. $x \in \Sigma$, we say $V$ is integer rectifiable or an integral varifold. The set of all integral varifolds in $U$ is denoted by $\bIV_{n-1}(U)$. We say that $V$ is a unit density varifold if $\Theta(x)=1$ for $\|V\|$-a.e. $x$.

For $V \in \bV_{n-1}(U)$ let $\delta V$ be the first variation of $V$, namely,
\begin{equation}
\label{E:1st-variation}
\delta V(X):=\int_{G(U)} \langle \nabla X(x),S \rangle_g \; dV(x,S) \qquad \forall X \in \mathfrak{X}_c(U).
\end{equation}
We also denote the total variation of $\delta V$ by $\|\delta V\|$. If $\|\delta V\|$ is a Radon measure on $U$ (i.e. $V$ has locally bounded first variation on $U$), we may apply the Radon-Nikodym theorem to $\delta V$ with respect to $\|V\|$. Writing the singular part of $\|\delta V\|$ with respect to $\|V\|$ as $\sigma_V$, we have a $\|V\|$-measurable vector field $H$, a $\|\delta V\|$-measurable vector field $\eta$ with $|\eta|=1$ $\| \delta V\|$-a.e., and a Borel set $Z \subset U$ such that $\|V\|(Z)=0$ and
\begin{equation}
\label{E:Radon-Nikodym} 
\delta V(X)=-\int_U X(x) \cdot H(x) \; d\|V\|(x) +\int_Z X(x) \cdot \eta(x) \; d\sigma_V(x)
\end{equation}
for all $X \in \mathfrak{X}_c(U)$. We call $H$ the generalized mean curvature vector, $\eta$ the generalized outward co-normal, $\sigma_V$ the generalized boundary measure, and $Z$ the generalized boundary of $V$.

To study the boundary behavior, we shall also need the notion of \emph{free-boundary varifolds} introduced in \cite[Section 3]{Edelen2020} (see also \cite[Definition 2.4]{Masi2020}). Given a smoothly embedded hypersurface $S \subset U$ oriented by the normal $\nu_S$, a varifold $V \in \bV_{n-1}(U)$ is said to have free boundary in $S$ if there exists a $\|V\|$-measurable vector field $H^T$ which is locally integrable with respect to $\|V\|$ and $H^T(x) \cdot \nu_S (x) =0$ for $\|V\|$-a.e. $x \in S$, satisfying
\begin{equation}
\label{E:free-boundary-varifold}
\delta V(X)= - \int_U X(x) \cdot H^T(x) \; d\|V\|(x)
\end{equation}
for all $X \in \mathfrak{X}_c(U)$ with $X \cdot \nu_S=0$ on $S$. Note that we use the notation $H^T$ to distinguish it from the generalized mean curvature vector $H$ defined in \eqref{E:Radon-Nikodym}. In fact, if $H$ exists, then $H^T$ is the tangential part of $H$ along $S$ (see \cite[Definition 3.0.3]{Edelen2020}). It was shown in \cite[Corollary 4.7]{Masi2020} \footnote{Although the statement was presented for varifolds in Euclidean space, the arguments there are local and hence the proof carries over to the Riemannian setting.} that any such free-boundary varifold with $H^T \in L^1_{loc}$ with respect to $\|V\|$ has locally bounded first variation in $U$. By \cite[Proposition 3.2]{Edelen2020}, an integral varifold $V \in \bIV_{n-1}(U)$ has free boundary in $S$ if and only if $V$ has locally bounded first variation in $U$, and the generalized boundary measure $\sigma_V$ is supported in $S$, and $\eta=\nu_S$ at $\sigma_V$-a.e. $x \in Z$. For our case at hand, we will mostly take $S=\partial \Omega$.

\subsection{Double-well potential}
\label{SS:double-well}

We fix a double-well potential $W:\mathbb{R} \to \mathbb{R}$ which is a non-negative smooth function satisfying the following:
\begin{itemize}
\item $W(\pm1)=W'(\pm1)=0$,
\item $W$ has a unique local non-degenerate maxima at $\gamma \in (-1,1)$,
\item there exist some $\alpha \in (0,1)$ and $\kappa \in (0,1)$ such that $W''(t) \geq \kappa$ for all $|t| \geq \alpha$.
\end{itemize}
We also define the energy constant:
\begin{equation}
\label{E:sigma}
h_0:=\int_{-1}^1 \sqrt{2 W(s)} \; ds.
\end{equation}

\begin{remark}
A common choice of the double-well potential is given by $W(s)=\frac{1}{4}(1-s^2)^2$ with the unique local maxima at $\gamma=0$. Note that our definition of the constant $h_0$ differs from \cite{Hutchinson2000,Guaraco2018} (denoted by $\sigma$ there) by a factor of $2$ but agrees with that in \cite{Kagaya2018}. Some of our results established in this paper actually hold for more general potentials with multiple wells, such as those treated in \cite{Tonegawa2003}.  
\end{remark}

\subsection{Constants and cutoffs}
\label{SS:cutoff}

We shall use the convention that $C(\alpha,\beta,\gamma,\dots)$ denotes a positive constant (which may change from line to line) that depends only on the parameters $\alpha, \beta,\gamma,\dots$.
Throughout this paper, we fix a smooth cutoff function $\chi:[0,\infty) \to [0,\infty)$ such that
\begin{itemize}
\item $\chi(s)=s$ for $s \in [0,1]$,
\item $\chi(s)=2$ for $s \geq 4$,
\item $0 \leq \chi'(s) \leq 1$ and $0 \leq -\chi''(s) \leq 1/2$ for all $s$.
\end{itemize}
For any constant $a>0$, we denote $\chi_a:[0,\infty) \to [0,\infty)$ to be the rescaled cutoff function $\chi_a(s):=a \chi(a^{-1} s)$. Note that $0 \leq \chi_a' \leq 1$ and $0 \leq -\chi''_a \leq a^{-1}/2$ everywhere.

\section{Assumptions and main results}
\label{S:main}

\subsection{Assumptions}
\label{SS:assumption}

From now on, we will assume the following for the rest of the paper unless stated otherwise. Let $(\widetilde{M},g)$ be a smooth complete Riemannian manifold with
\begin{itemize}
\item injectivity radius lower bound $\inj_g(\widetilde{M})\geq i_0>0$ and
\item sectional curvature bound $|\sec_{(\widetilde{M},g)}| \leq k_0$ for some $k_0 \geq 0$;
\end{itemize}
and $\Omega \subset \widetilde{M}$ be an open domain with smooth non-empty boundary $\partial \Omega$ such that
\begin{itemize}
\item $\Omega$ has bounded ball curvatures $\max\{|\overline{Z}_\Omega|, |\underline{Z}_\Omega|\} \leq \kappa_0$ for some $\kappa_0 \geq 0$.
\end{itemize}
The outward unit normal of $\partial \Omega$ is denoted by $\nu$. We also fix a bounded open set $U \subset \widetilde{M}$ with Lipschitz boundary. Suppose we have 
\begin{itemize}
\item a sequence $\{\ep_i\}_{i=1}^\infty$ of positive real numbers $\ep_i \to 0$;
\item a sequence $\{\lambda_i\}_{i=1}^\infty$ of real numbers such that $|\lambda_i| \leq \Lambda_0$ for all $i$;
\item a sequence $\{u_i\}_{i=1}^\infty$ of smooth functions on $\overline{\Omega} \cap U$ with $\|u_i\|_{L^\infty} \leq C_0$ and $E_{\ep_i}(u_i) \leq E_0$ satisfying the (volume-constrained) Allen-Cahn equation
 \begin{equation}
\label{A:1a}
-\ep_i \Delta u_i + \frac{W'(u_i)}{\ep_i}=\lambda_i \quad  \text{in $\Omega \cap U$}
\end{equation}
together with the homogeneous Neumann boundary condition
\begin{equation}
\label{A:1b}
\frac{\partial u_i}{\partial \nu}=0 \quad \text{on $\partial \Omega \cap U$}.
\end{equation}
\end{itemize}

\begin{remark}
\label{R:1}
We can relax the a-priori regularity of $u_i$ to lie only inside certain Sobolev spaces. The interior regularity for weak solutions to (\ref{A:1a}) follows from standard elliptic PDE theory \cite{Gilbarg2001} while the boundary regularity along $\partial \Omega$ for weak solutions to \eqref{A:1a} and \eqref{A:1b} follows from \cite{Agmon1959}.
\end{remark}

As in \cite{Hutchinson2000}, after passing to a subsequence, $u_i$ converges a.e. and in $L^1$ to some $u \in BV(\Omega \cap U)$ such that $u = \pm 1$ a.e.. Hence, $\{u=1\}$ and $\{u=-1\}$ are sets of finite perimeter in $U \cap \Omega$ and we write $M:=\partial^* \{u=1\}$ for the reduced boundary of $\{u=1\}$ in $\Omega$ with outward-pointing unit normal $\nu_M$ (see \cite[Section 5.7]{Evans2015}). Moreover, we have
\begin{equation}
\label{E:u-BV}
\|\partial \{u=1\}\|(\Omega \cap U) = \frac{1}{2} \int_{\Omega \cap U} |Du| \leq \frac{E_0}{h_0}.
\end{equation}

For each $i$, we define a Radon measure $\mu_i$ on $U$ by
\begin{equation}
\label{E:mu0}
\mu_i:= \frac{1}{h_0} \left( \frac{\ep_i |\nabla u_i|^2}{2} +\frac{W(u_i)}{\ep_i} \right) \; dv_g \mres {\Omega \cap U},
\end{equation}
and the associated varifold $V_i \in \bV_{n-1}(U)$ by
\begin{equation}
\label{E:asso-varifold}
V_i(\phi):=\int_{\{|\nabla u_i| \neq 0\}} \phi \left(x,I-\frac{\nabla u_i}{|\nabla u_i|} \otimes \frac{\nabla u_i}{|\nabla u_i|} \right)\; d\mu_i
\end{equation}
for $\phi \in C_c(G(U))$. Note that as measures on $U$, both $\mu_i$ and $V_i$ are supported in $U \cap \overline{\Omega}$ and $\|V_i\|=\mu_i \mres {\{| \nabla u_i| \neq 0\}}$. Moreover, the uniform energy bound $E_{\ep_i}(u_i) \leq E_0$ implies the uniform bound $\|V_i\|(U) \leq E_0/h_0$ and hence (after passing to a subsequence) $V_i$ converges as varifolds to some $V \in \bV_{n-1}(U)$. The first variation of $V_i$ is given by (see \cite{Padilla1998})
\begin{equation}
\label{E:Vi-first-variation}
\delta V_i(X)= \int_{\{|\nabla u_i| \neq 0\}} \left\langle \nabla X, I-\frac{\nabla u_i}{|\nabla u_i|} \otimes \frac{\nabla u_i}{|\nabla u_i|} \right\rangle_g \; d\mu_i 
\end{equation}
for every $X \in \mathfrak{X}_c(U)$.

\subsection{Boundary behavior}
\label{SS:boundary}

As in \cite{Kagaya2018}, we can also consider the behavior of $u_i|_{\partial \Omega \cap U}$. First of all, we prove a useful lemma which gives a local bound for the Allen-Cahn energy restricted to the boundary. This can be viewed as a localized version of \cite[Lemma 4.2]{Kagaya2018}.

\begin{lemma}
\label{L:local-boundary-energy}
For any $\tilde{U} \Subset U$, there exists a constant $C_2>0$ depending only on $n$, $k_0$, $\kappa_0$, $\dist_g(\tilde{U},\partial U)$ such that
\[ \begin{aligned}
\int_{\partial \Omega \cap \tilde{U}} \frac{\ep_i |\nabla u_i|^2}{2} + \frac{W(u_i)}{\ep_i} \; d\mathcal{H}^{n-1} \leq &  C_2 \int_{\Omega \cap U} \frac{\ep_i |\nabla u_i|^2}{2} + \frac{W(u_i)}{\ep_i} \\
& +\Lambda_0 C_0(C_2 dv_g(\Omega \cap U) + \mathcal{H}^{n-1}(\partial \Omega \cap U)).
\end{aligned}
\]
\end{lemma}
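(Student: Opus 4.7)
The plan is to reduce the boundary integral to an interior one via the divergence theorem applied to the Allen-Cahn stress-energy tensor. Introduce
\[
T_{ab}(u_i) := \ep_i \, \nabla_a u_i \, \nabla_b u_i - g_{ab}\, e_i, \qquad e_i := \frac{\ep_i |\nabla u_i|^2}{2} + \frac{W(u_i)}{\ep_i},
\]
where $a,b$ are tensor indices and $i$ is the sequence index. A direct Riemannian calculation using \eqref{A:1a} and the symmetry of $\Hess u_i$ (no curvature term enters because $T$ contracts only through the metric) gives the pointwise identity $\nabla^b T_{ab} = -\lambda_i \nabla_a u_i$ in $\Omega \cap U$. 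The reason $T$ is the right object near the boundary is that the Neumann condition \eqref{A:1b} forces $\nabla u_i$ to be tangent to $\partial \Omega$, so for any vector field $X$ one has $T(X, \nu)|_{\partial \Omega} = -(X \cdot \nu)\, e_i$.

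Next, I would construct a smooth vector field $X$ on $U$ with $X|_{\partial \Omega \cap U} = \eta^2 \nu$, compactly supported in $U$, and with $C^1$ norm controlled by a constant depending only on $n$, $k_0$, $\kappa_0$, and $\dist_g(\tilde U, \partial U)$. Fix a cutoff $\eta \in C^\infty_c(U)$ with $\eta \equiv 1$ on $\tilde U$ and $\|\nabla \eta\|_\infty \leq C/\dist_g(\tilde U, \partial U)$, then set $X := -\eta^2 \psi(d) \nabla d$, where $d(\cdot) = \dist_g(\cdot, \partial \Omega)$ is the signed distance function (smooth with $\|\Hess d\| \leq C_1$ in the tubular neighborhood $U_{1/(2\kappa_0)}(\partial \Omega)$ by \eqref{E:dist-estimates}) and $\psi$ is a fixed smooth cutoff with $\psi(0) = 1$ supported sufficiently close to $0$. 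Since $\nabla d|_{\partial \Omega} = -\nu$, one indeed has $X|_{\partial \Omega \cap U} = \eta^2 \nu$. Applying the divergence theorem to $T_{ab}(u_i) X^a$ on $\Omega \cap U$ (with no contribution from $\Omega \cap \partial U$ thanks to the compact support of $X$) and combining with the two observations above yields the master identity
\[
\int_{\partial \Omega \cap U} \eta^2 e_i \, d\mathcal{H}^{n-1} = -\int_{\Omega \cap U} T_{ab}(u_i) \nabla^b X^a \, dv_g + \lambda_i \int_{\Omega \cap U} X \cdot \nabla u_i \, dv_g.
\]

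It remains to bound the two right-hand terms. The pointwise bound $|T(u_i)|_g \leq 3 e_i$ (using $\ep_i |\nabla u_i|^2 \leq 2 e_i$) controls the first integral by $C\|\nabla X\|_\infty \int_{\Omega \cap U} e_i$, giving the first term of the desired estimate. For the $\lambda_i$-term, since no $\ep_i$-uniform $L^1$ bound on $\nabla u_i$ is available, the idea is to do one further integration by parts, trading the derivative on $u_i$ for the $L^\infty$ bound on $u_i$ itself:
\[
\lambda_i \int_{\Omega \cap U} X \cdot \nabla u_i \, dv_g = \lambda_i \int_{\partial \Omega \cap U} u_i (X \cdot \nu) \, d\mathcal{H}^{n-1} - \lambda_i \int_{\Omega \cap U} u_i \, \Div X \, dv_g,
\]
which by $\|u_i\|_\infty \leq C_0$ and $|\lambda_i| \leq \Lambda_0$ is bounded by $\Lambda_0 C_0 \bigl(\mathcal{H}^{n-1}(\partial \Omega \cap U) + C\, dv_g(\Omega \cap U)\bigr)$. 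Since $\eta \equiv 1$ on $\tilde U$, the left-hand side dominates $\int_{\partial \Omega \cap \tilde U} e_i\, d\mathcal{H}^{n-1}$, and absorbing all geometric constants into a single $C_2 = C_2(n, k_0, \kappa_0, \dist_g(\tilde U, \partial U))$ yields the claimed inequality. The main (though mild) technical point is verifying the stress-energy divergence identity $\nabla^b T_{ab} = -\lambda_i \nabla_a u_i$ in the Riemannian setting; everything else is a routine application of the divergence theorem together with the Neumann condition and the uniform $L^\infty$ bound on $u_i$.
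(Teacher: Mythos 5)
Your proof is correct and, once unwound, is essentially the paper's argument. The paper invokes the Kagaya–Tonegawa first-variation identity \cite[Lemma 4.1]{Kagaya2018} (their equation \eqref{E:local-boundary-energy-1}), which is precisely the integrated divergence identity for the stress-energy tensor $T_{ab}$ that you derive from scratch after rewriting $h_0\,\delta V_i(X)$ in terms of $T$; the test vector field is the same up to renaming ($\zeta \leftrightarrow \eta^2$, $\chi_a'(d) \leftrightarrow \psi(d)$, both cutoffs equal to $1$ at $d=0$ and supported in the tubular neighborhood where $d$ is smooth); and the final integration by parts that trades $\nabla u_i$ for $u_i$ in the $\lambda_i$-term is identical. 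Your version is marginally more self-contained, bypassing the varifold language and consolidating the discrepancy term, the $\{|\nabla u_i|=0\}$ term, and the first-variation term of \eqref{E:local-boundary-energy-1} into the single bound $\bigl|\int T_{ab}\nabla^b X^a\bigr| \le C(n)\|\nabla X\|_\infty\int e_i$. Two minor points: the parenthetical ``no curvature term enters because $T$ contracts only through the metric'' is not quite the right explanation --- the correct reason is that $\nabla_b\nabla_a u = \nabla_a\nabla_b u$ for any function irrespective of curvature, so the commutation producing the cancellation is free; and $|T|_g \le 3e_i$ holds for the operator norm while the Frobenius norm gives $(2+\sqrt n)\,e_i$, but either way this is absorbed into the $n$-dependent constant $C_2$ and is harmless.
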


\begin{proof}
Let $X \in \mathfrak{X}_c(U)$. By the calculations in \cite[Lemma 4.1]{Kagaya2018}, we have
\begin{equation}
\label{E:local-boundary-energy-1}
\begin{aligned}
h_0 \delta V_i(X) = & \int_{\Omega \cap \{|\nabla u_i| \neq 0\}} \nabla X \cdot \left( \frac{\nabla u_i}{|\nabla u_i|} \otimes \frac{\nabla u_i}{|\nabla u_i|} \right) \left( \frac{\ep_i |\nabla u_i|^2}{2} - \frac{W(u_i)}{\ep_i}  \right)  \\
& - \int_{\Omega \cap \{|\nabla u_i| =0\}} (\nabla X \cdot I) \frac{W(u_i)}{\ep_i}  + \lambda_i \int_{\Omega} u_i \Div_g X \\
& + \int_{\partial \Omega} \left( \frac{\ep_i |\nabla u_i|^2}{2} + \frac{W(u_i)}{\ep_i} -\lambda_i u_i \right) (X \cdot \nu) d \mathcal{H}^{n-1}.  
\end{aligned}
\end{equation}
We choose a vector field $X=-\zeta \nabla (\chi_a \circ d)$ where $d(\cdot):=\dist_g(\cdot, \partial \Omega)$, $a=(8 \kappa_0)^{-1}$ and $\zeta \in C_c(U)$ is a smooth cutoff function such that $\zeta=1$ on $\tilde{U}$; $0 \leq \zeta \leq 1$ and $|\nabla \zeta| \leq \frac{2}{\dist_g(\tilde{U},\partial U)}$. Note that $X=\zeta \nu$ along $\partial \Omega \cap U$ and $\|\nabla X\|_{L^\infty} \leq C(n,k_0,\kappa_0,\dist_g(\tilde{U},\partial U))$ by \eqref{E:dist-estimates}. Moreover, by \eqref{E:Vi-first-variation}, it is easily seen that
\[ |h_0 \delta V_i(X)| \leq \|\nabla X\|_{L^\infty}  \int_{\Omega \cap U} \frac{\ep_i |\nabla u_i|^2}{2} + \frac{W(u_i)}{\ep_i}. \]
Combining this and by examining each of the terms on the right hand side of \eqref{E:local-boundary-energy-1}, we have the desired inequality using the that $\|u_i\|_{L^\infty} \leq C_0$ and $|\lambda_i| \leq \Lambda_0$.
\end{proof}

By the same arguments as in \cite{Hutchinson2000} (c.f. \cite[Theorem 3.1]{Kagaya2018}), using Lemma \ref{L:local-boundary-energy}, after passing to a subsequence, $u_i \mres {\partial \Omega \cap U}$ converges a.e. and in $L^1_{loc}$ with respect to $\mathcal{H}^{n-1}$ to some $\tilde{u} \in BV_{loc}(\partial \Omega \cap U)$ such that $\tilde{u} = \pm 1$ $\mathcal{H}^{n-1}$-a.e.. Hence, $\{\tilde{u}=1\}$ and $\{\tilde{u}=-1\}$ are sets of locally finite perimeter in $\partial \Omega \cap U$. However, $\tilde{u}$ may not coincide with the trace of $u$ on $\partial \Omega \cap U$ (see \cite{Kagaya2018} and the discussions therein).

\subsection{Main result}
\label{SS:main}

We are now ready to state the main result of this paper.

\begin{theorem}
\label{T:main}
Under the assumptions and notations given in Section \ref{SS:assumption}, after possibly passing to a subsequence, we have
\begin{itemize}
\item $\lambda_i \to \lambda$, 
\item $u_i \to u$ a.e. and in $L^1(\Omega \cap U)$ with respect to $dv_g$,
\item $u_i \mres {\partial \Omega \cap U} \to \tilde{u}$ a.e. and in $L^1_{loc}(\partial \Omega \cap U)$ with respect to $\mathcal{H}^{n-1}$,
\item $V_i \to V$ as varifolds in $U$,
\end{itemize}
where $\lambda \in \mathbb{R}$, $u \in BV(\Omega \cap U)$, $\tilde{u} \in BV_{loc}(\partial \Omega \cap U)$ and $V \in \bV_{n-1}(U)$.
Moreover, the following statements are true:
\begin{itemize}
\item[(1)] (Equipartition of energy) For each $\phi \in C_c(U)$,
\begin{equation}
\begin{aligned}
\frac{1}{2}\|V\|(\phi)&=\lim_{i \to \infty} \frac{1}{h_0} \int_{\overline{\Omega} \cap U} \frac{\ep_i |\nabla u_i|^2}{2} \phi = \lim_{i \to \infty} \frac{1}{h_0} \int_{\overline{\Omega} \cap U} \frac{W(u_i)}{\ep_i} \phi.
\end{aligned}
\end{equation}
\item[(2)] (Uniform convergence away from limit interface) $M=\partial^* \{u=1\} \subset \spt \|V\| \cap \Omega$ and $u_i \to \pm 1$ locally uniformly on $(\Omega \cap U) \setminus \spt \|V\|$ as $i \to \infty$. For each $b \in (0,1)$, the subsets $\{|u_i| \leq 1-b\}$ locally converges in the Hausdorff sense to $\spt \|V\|$ inside $\Omega \cap U$ as $i \to \infty$.
\item[(3)] (Regularity of limit interface) $V \in \bIV_{n-1}(U)$ with density 
\[ \Theta(x)= \left\{ \begin{array}{cl}
\text{odd} & \text{for $\mathcal{H}^{n-1}$-a.e. $x \in M$},\\
\text{even} & \text{for $\mathcal{H}^{n-1}$-a.e. $x \in (\spt \|V\| \cap \Omega) \setminus M$}.
\end{array} \right.
\]
In other words, $V$ is integer rectifiable up to the boundary $\partial \Omega$ even when $\|V\|(\partial \Omega)>0$.
\item[(4)] (Variational property of limit interface) $V$ has free boundary in $\partial \Omega \cap U$ with
\[ H^T(x)= \left\{ \begin{array}{cl}
\frac{2 \lambda}{h_0 \Theta(x)} \nu_M(x)  & \text{for $\mathcal{H}^{n-1}$-a.e. $x \in M$},\\
0 & \text{for $\mathcal{H}^{n-1}$-a.e. $x \in \spt \|V\|  \setminus M$}.
\end{array} \right.
\]
Moreover, $\spt \sigma_V \subset \partial \Omega \cap U$ and $\eta(x) =\nu(x)$ for $\sigma_V$-a.e. $x$ on the generalized boundary of $V$.
\end{itemize}
\end{theorem}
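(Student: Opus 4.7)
The plan is to proceed in stages that follow the four conclusions of Theorem \ref{T:main}, broadly mirroring the interior scheme of Hutchinson--Tonegawa \cite{Hutchinson2000} but introducing at each stage the boundary modifications outlined in the introduction. The preliminary convergences of $\lambda_i$, $u_i$, $u_i|_{\partial\Omega\cap U}$ and $V_i$ follow by standard compactness: the hypothesis $|\lambda_i|\leq\Lambda_0$ gives a subsequential limit $\lambda$; the BV estimate \eqref{E:u-BV} together with the $L^\infty$ bound and Rellich yield $u\in BV(\Omega\cap U)$ with $u=\pm 1$ a.e.; Lemma \ref{L:local-boundary-energy} combined with the trace arguments from \cite{Hutchinson2000,Kagaya2018} produces the limit $\tilde u$; and $\|V_i\|(U)\leq E_0/h_0$ gives a subsequential varifold limit $V$.

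The central analytic tool to develop is a local boundary monotonicity formula for $r\mapsto \mu_i(B_r(x))/r^{n-1}$ valid for balls either disjoint from $\partial\Omega$ or centered on $\partial\Omega$. The first ingredient is a pointwise one-sided upper bound on the discrepancy $\xi_i:=\tfrac{\ep_i|\nabla u_i|^2}{2}-\tfrac{W(u_i)}{\ep_i}$, obtained by applying the maximum principle to $\xi_i^+$: the Bochner identity \eqref{E:Bochner}, the Allen--Cahn equation \eqref{A:1a}, and the bounds \eqref{E:dist-estimates} on $d(\cdot)$ show that $\xi_i^+$ satisfies a suitable differential inequality, while the Neumann condition \eqref{A:1b} controls its conormal derivative on $\partial\Omega$ purely in terms of $\Hess d$ (no convexity assumption entering). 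This one-sided control, combined with an integral estimate on $\xi_i^+$ obtained by multiplying \eqref{A:1a} by a cutoff of $d(\cdot)$ and integrating, is enough to close the computation and deliver the almost-monotonicity. From there one reads off an upper density bound $\Theta^{n-1}(\|V\|,x)\leq C$ throughout $\overline{\Omega}\cap U$, and a matching lower bound in $\Omega\cap U$; the failure of the lower bound at $\partial\Omega$ is consistent with the boundary clustering examples of Malchiodi--Wei.

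Equipartition of energy (1) follows from the vanishing of the limiting discrepancy measure, which is obtained by integrating the one-sided pointwise bound against cutoffs and using the almost-monotonicity. This vanishing allows the first variation \eqref{E:Vi-first-variation} to pass to the limit with controlled errors, yielding that $V$ has locally bounded first variation; integrating by parts in \eqref{E:Vi-first-variation} using \eqref{A:1a} and \eqref{A:1b} and testing against $X\in\mathfrak{X}^{tan}_c(U)$ kills the normal flux and produces in the limit the free-boundary identity \eqref{E:free-boundary-varifold} with $H^T$ given by the formula in (4), the factor $1/\Theta$ accounting for multiplicity; the odd/even parity statement in (3) reflects the fact that $M$ is precisely the jump set of the limiting $BV$ function $u$. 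Statement (2) follows from an ODE comparison using \eqref{A:1a}: wherever the local energy vanishes, $u_i$ is forced near a zero of $W$, yielding locally uniform convergence and Hausdorff convergence of level sets. Rectifiability of $V$ in (3) then follows by invoking an Allard-type rectifiability criterion for the free-boundary varifold, using the density bounds and the $\mathcal{H}^{n-1}$-lower bound on $M$ coming from \eqref{E:u-BV}.

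The deepest step, and the main anticipated obstacle, is establishing integrality in (3) \emph{up to the boundary}. At interior points one follows the Hutchinson--Tonegawa blow-up analysis: the tangent varifold is a finite union of hyperplanes whose multiplicities come from one-dimensional Allen--Cahn profiles and thus sum to an integer. At a boundary point $x_0\in\partial\Omega\cap\spt\|V\|$ the blow-up lives in a Euclidean half-space and, owing to possible clustering, the tangent varifold could contain both hyperplanes perpendicular to $\partial\Omega$ (interior interfaces touching the boundary) and hyperplanes parallel to $\partial\Omega$ (clustered sheets collapsing onto the boundary). The absence of a two-sided discrepancy bound prevents a direct application of the interior blow-up, and the parallel clustering rules out a simple reflection argument; instead one must combine the one-sided pointwise bound on $\xi_i^+$, the integral estimates on $\xi_i^+$, and the boundary-centered monotonicity to control each type of tangent plane and to track which phase each sheet separates, thereby obtaining integer multiplicities together with the claimed parity. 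This boundary blow-up and accompanying rigidity analysis on the Riemannian half-space is where the bulk of the new work lies.
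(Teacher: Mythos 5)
The architectural outline you give for the preliminary convergences, the Pohozaev-based monotonicity, and the role of the one-sided discrepancy bound is aligned with the paper, and you correctly identify that the lower density bound should not be expected at $\partial\Omega$. However, two load-bearing parts of the argument are missing or misdescribed.

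First, the passage from ``$V$ has free boundary with bounded $H^T$'' to ``Allard rectifiability applies'' requires a locally uniform \emph{lower} density bound on all of $\spt\|V\|$, including on $\partial\Omega$ where Proposition~\ref{P:mu-density-bound} supplies no lower bound. You gesture at the $\mathcal{H}^{n-1}$-lower bound on $M$ coming from \eqref{E:u-BV}, but $M=\partial^*\{u=1\}$ lives in the open set $\Omega\cap U$ and gives you nothing at boundary points, which is exactly where the clustering scenario makes the question delicate. The paper resolves this by exploiting the free boundary structure with $H^T\in L^\infty$: the \emph{reflected} density is upper semicontinuous (Gr\"uter--Jost, \cite[Corollary 3.2]{Grueter1986}), so the interior density lower bound from Proposition~\ref{P:mu-density-bound} propagates to $\spt\|V\|\cap\partial\Omega$. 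Without this, the Allard criterion cannot be invoked up to the boundary.

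Second, and more seriously, your description of the integrality step does not contain the key mechanism. You anticipate that the tangent varifold at a boundary point ``could contain both hyperplanes perpendicular to $\partial\Omega$ \dots and hyperplanes parallel to $\partial\Omega$,'' but after rectifiability the blow-up at $\|V\|$-a.e.\ point is a \emph{single} plane $T$ with multiplicity $\Theta(x)$, and at $x\in\partial\Omega$ the stationarity with free boundary together with $\spt\tilde V\subset\mathbb{R}^n_+$ forces $T=\{x_n=0\}$. The genuine difficulty is not a dichotomy of plane types but how to extract integer multiplicity for sheets that may be collapsing onto $\partial\Omega$. The paper's crucial insight (Proposition~\ref{P:sheet-separation}) is that small discrepancy and tilt-excess force $u$ in the transition region to be $\ep$-scale $C^1$-close to the one-dimensional heteroclinic profile $q(x_n)$, hence $\partial_{x_n}u\neq 0$ there; but the Neumann condition \eqref{A:1b} kills $\partial_{\nu}u$ on $\partial\Omega$, so the transition set is in fact \emph{forced to stay a definite $\ep$-scale distance away from $\partial\mathbb{R}^n_+$}. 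It is precisely this automatic separation of the sheets from the boundary that reduces the boundary integrality question to the interior stacking argument of Hutchinson--Tonegawa. Your proposal correctly flags that ``the bulk of the new work lies'' here, but does not supply this mechanism; without it, the boundary integrality step has no proof.

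A minor further remark: your integral bound on $\xi_\ep^+$ is not obtained by multiplying \eqref{A:1a} by a cutoff of $d(\cdot)$. The paper decomposes $B_r(x)\cap\Omega$ into a thin collar $\mathcal{A}$, a transition zone $\mathcal{B}$ controlled via the density lower bound and a Vitali covering, and a far region $\mathcal{C}$ where the gradient of the PDE is tested against $\phi^2\nabla u$ and the convexity of $W$ near $\pm1$ yields a Caccioppoli-type gain; the factor $r^{-7/8}(I_{\ep,u}(r,x)+1)$ in Proposition~\ref{P:xi-integral-bound} is essential to close the Gronwall estimates in the monotonicity argument, and a naive cutoff-in-$d$ argument would not produce it.
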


\begin{remark}
When $U \subset \Omega$, Theorem \ref{T:main} reduces to the interior regularity result in \cite{Hutchinson2000} (see also \cite{Guaraco2018,Mantoulidis2021} for the adaption to the Riemannian setting).
\end{remark}

\begin{remark}
One key conclusion of Theorem \ref{T:main} is that the limit interface $V$ is integer-rectifiable \emph{up to the boundary} $\partial \Omega$. To the best of the authors' knowledge, the boundary behavior is only partially treated under further restrictive assumptions. For example, the first three conclusions in Theorem \ref{T:main} were obtained in \cite{Tonegawa2003} in the case of convex Euclidean domains. In their recent work \cite{Kagaya2018}, Kagaya-Tonegawa considered the more general Allen-Cahn functional with boundary contact energy and studied the boundary behavior of the limit interface \emph{under the assumption} of vanishing discrepancy measure. In this paper, we prove, in the case of zero boundary contact energy, the vanishing of the discrepancy measure without any convexity assumption on $\partial \Omega$, even when boundary concentration of $\|V\|$ occurs. We will address the more general case with boundary contact energy in a future work.
\end{remark}

\begin{remark}
As in \cite{Tonegawa2002,Tonegawa2005}, one can consider variable chemical potentials lying in suitable Sobolev spaces instead of constants on the right hand side of \eqref{A:1a}. We will treat this in a forthcoming work and study the validity of the Gibbs-Thomson law as in \cite{Roeger2008}.
\end{remark}

\begin{remark}
By \cite[Theorem 1.3]{Masi2020}, we can conclude that the $(n-2)$-dimensional part of the generalized measure $\sigma_V \lc \{x|\Theta^{n-2}_*(\sigma_V,x)>0\}$ is $(n-2)$-rectifiable. It would be interesting to study more refined regularity and singular structure of the full generalized measure $\sigma_V$, which is highly non-trivial due to the lack of a suitable monotonicity formula (c.f. \cite[Section 4]{Tonegawa2003}). We will address this in future work.

\end{remark}

\section{Monotonicity formula and consequences}
\label{S:monotonicity}

Let $(\widetilde{M},g)$ and $\Omega, U \subset \widetilde{M}$ be as given in Section \ref{SS:assumption}. Throughout this section, we assume $u \in C^\infty(\overline{\Omega} \cap U)$ is a solution to 
 \begin{equation}
\label{E:1a}
-\ep \Delta u + \frac{W'(u)}{\ep}=\lambda \quad  \text{in $\Omega \cap U$,}
\end{equation}
\begin{equation}
\label{E:1b}
\frac{\partial u}{\partial \nu}=0 \quad \text{on $\partial \Omega \cap U$},
\end{equation}
where $\epsilon \in (0,1)$, $|\lambda| \leq \Lambda_0$, $\|u\|_{L^\infty} \leq C_0$ and $E_\ep(u) \leq E_0$. We define the energy density $e_\epsilon(u)$ and discrepancy function $\xi_\ep(u)$ respectively by
\begin{equation}
\label{E:energy-density}
e_\ep(u):= \frac{\ep |\nabla u|^2}{2} + \frac{W(u)}{\ep}
\end{equation}
and 
\begin{equation}
\label{E:discrepancy}
\xi_\ep(u):= \frac{\ep |\nabla u|^2}{2} - \frac{W(u)}{\ep}.
\end{equation}

Our first goal is to prove an almost-monotonicity inequality (c.f. \cite[Appendix B]{Guaraco2018}) of the form:
\[ \frac{d}{d\rho} \left( e^{c\rho} \rho^{-(n-1)} \int_{B_\rho(x) \cap \Omega} e_\ep(u) \right) \geq  e^{c\rho} \rho^{-(n-1)} \int_{B_\rho(x) \cap \Omega}  -(\xi_\ep(u))^+ \]
for some $c>0$ and all sufficiently small $\rho$. Next, we establish up-to-the-boundary estimates, in both pointwise and integral forms, for $u$ and $\xi_\ep(u)$. Together with the almost-monotonicity inequality, these estimates allow us to control the energy density ratio
\begin{equation}
\label{E:energy-ratio}
I_{\ep,u}(r,x):=\frac{1}{\omega_{n-1} r^{n-1}} \int_{B_r(x) \cap \Omega} e_\ep(u) 
\end{equation}
which will be instrumental in deriving the regularity of the limit interface in later sections.

\subsection{Almost monotonicity formula}
\label{SS:monotonicity}

Recall that the monotonicity formula for minimal surfaces is obtained by applying the first variation formula for the area functional with (suitably truncated) radial test vector fields. In the Allen-Cahn setting, the almost monotonicity formula (which can be viewed as an $\ep$-version of the monotonicity formula for minimal surfaces) can be obtained by similar arguments. The following \emph{Pohozaev identity} plays the role of the first variation formula in this regard.

\begin{lemma}[Pohozaev identity]
\label{L:Pohozaev} 
Let $u \in C^2(\overline{\Omega} \cap U)$ be a solution to the semilinear equation $\Delta u = F(u)$ in $\Omega \cap U$ where $F=G'$ for some smooth function $G:\mathbb{R} \to \mathbb{R}$. For any $X \in \mathfrak{X}_c(U)$, we have
\begin{equation}
\begin{aligned}
\label{E:Pohozaev} 
\int_{\Omega \cap U} \left[ \left(\frac{|\nabla u|^2}{2} + G(u)  \right) \Div_g X - \nabla X (\nabla u, \nabla u) \right] \; dv_g  & \\
=\int_{\partial \Omega \cap U}  \left[ \left(\frac{|\nabla u|^2}{2} + G(u)  \right) (X \cdot \nu)  -  (\nabla u \cdot X) \frac{\partial u}{\partial \nu} \right] & \; d\mathcal{H}^{n-1}.
\end{aligned}
\end{equation}
\end{lemma}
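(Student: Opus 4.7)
The plan is to obtain the identity by the standard device of multiplying the equation $\Delta u = F(u)$ by the scalar $X \cdot \nabla u$, integrating over $\Omega \cap U$, and integrating by parts twice. This is essentially the computation $\Div_g T(X,\cdot) = 0$ for the stress-energy tensor
\[
T_{ij} = \nabla_i u \, \nabla_j u - \left(\tfrac{1}{2}|\nabla u|^2 + G(u)\right) g_{ij},
\]
written out explicitly; the Riemannian setting introduces no new terms because the Hessian of a scalar function is symmetric and $X$ has compact support inside $U$, so there is no boundary contribution along $\partial U$.

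First I would treat the right-hand side: by the chain rule $(X \cdot \nabla u) F(u) = X \cdot \nabla (G(u))$, so one integration by parts gives
\[
\int_{\Omega \cap U} (X \cdot \nabla u)\, F(u)\, dv_g = -\int_{\Omega \cap U} G(u)\, \Div_g X \, dv_g + \int_{\partial \Omega \cap U} G(u)\,(X \cdot \nu)\, d\mathcal{H}^{n-1}.
\]
Next, for the left-hand side I would use $(X \cdot \nabla u)\Delta u = \Div_g\bigl((X \cdot \nabla u)\nabla u\bigr) - \nabla(X \cdot \nabla u)\cdot \nabla u$. The divergence term produces the boundary integral $\int_{\partial \Omega \cap U}(X \cdot \nabla u)\,\partial u/\partial \nu \, d\mathcal{H}^{n-1}$. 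Expanding $\nabla_j(X^i \nabla_i u) = (\nabla_j X^i)(\nabla_i u) + X^i \nabla_j \nabla_i u$ and using symmetry of the Hessian on scalars, $\nabla_j \nabla_i u = \nabla_i \nabla_j u$, the second piece becomes $X \cdot \nabla(\tfrac{1}{2}|\nabla u|^2)$. A second integration by parts yields
\[
\int_{\Omega \cap U} X \cdot \nabla\!\left(\tfrac{1}{2}|\nabla u|^2\right) dv_g = -\int_{\Omega \cap U} \tfrac{1}{2}|\nabla u|^2 \, \Div_g X \, dv_g + \int_{\partial \Omega \cap U} \tfrac{1}{2}|\nabla u|^2 (X \cdot \nu)\, d\mathcal{H}^{n-1}.
\]

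Equating the two resulting expressions for $\int (X \cdot \nabla u)\Delta u$ and $\int (X \cdot \nabla u) F(u)$ and collecting interior and boundary terms directly yields \eqref{E:Pohozaev}. There is no real obstacle in this argument; the only items to watch are (i) commutativity of covariant derivatives on a scalar, which holds on any Riemannian manifold and prevents curvature terms from appearing, and (ii) the compact support of $X$ inside $U$, which kills all boundary contributions from $\partial U$ and leaves only the physical free-boundary piece on $\partial \Omega \cap U$. Note moreover that the Neumann condition \eqref{E:1b} is \emph{not} used here --- this is a general identity for arbitrary $X \in \mathfrak{X}_c(U)$; the cancellation of the term $(\nabla u \cdot X)\,\partial u/\partial \nu$ will come later, when \eqref{E:Pohozaev} is applied to tangential $X \in \mathfrak{X}^{tan}_c(U)$ or when Neumann data is invoked.
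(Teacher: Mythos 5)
Your proposal is correct and follows precisely the route the paper indicates: multiply $\Delta u = F(u)$ by $X\cdot\nabla u$ and integrate by parts twice, using the chain rule $(X\cdot\nabla u)F(u)=X\cdot\nabla(G(u))$ on the right and symmetry of the Hessian on the left so that the term $X^i\nabla_j\nabla_i u\,\nabla^j u$ collapses to $X\cdot\nabla(\tfrac12|\nabla u|^2)$. The paper gives this as a one-line proof (citing Xiong for a similar computation); you have simply spelled out the same calculation, and your remarks about the absence of curvature terms and the irrelevance of the Neumann condition at this stage are both accurate.
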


\begin{proof}
Multiply both sides of the equation $\Delta u = F(u)$ by $X \cdot \nabla u$ and then integrate by part twice (see \cite[Lemma 9]{Xiong2018} for a similar calculation). 
\end{proof}

For the Allen-Cahn equation \eqref{E:1a}, we choose $G(t)=\ep^{-2} \widetilde{W}(t)$ where
\[ \widetilde{W}(t):=W(t)- \ep \lambda t + \ep \Lambda_0 C_0.\]
Note that the constant term $\ep \Lambda_0 C_0$ is there simply to ensure that $\widetilde{W}(u) \geq 0$ under our assumptions. For this choice of $G$ and using the Neumann boundary condition \eqref{E:1b}, the Pohozaev identity \eqref{E:Pohozaev} reads
\begin{equation}
\begin{aligned}
\label{E:Pohozaev-1} 
\int_{\Omega \cap U} \left[ \left(\frac{\ep |\nabla u|^2}{2} + \frac{\widetilde{W}(u)}{\ep} \right) \Div_g X - \ep \nabla X (\nabla u, \nabla u) \right] \; dv_g &  \\
=\int_{\partial \Omega \cap U} \left(\frac{\ep |\nabla u|^2}{2} + \frac{\widetilde{W}(u)}{\ep}\right) (X \cdot \nu) \; d\mathcal{H}^{n-1} &.
\end{aligned}
\end{equation}

We define $\widetilde{e}_\ep(u)$, $\widetilde{\xi}_\ep(u)$ and $\widetilde{I}_{\ep,u}(r,x)$ as in \eqref{E:energy-density}, \eqref{E:discrepancy} and \eqref{E:energy-ratio} similarly with $W$ replaced by $\widetilde{W}$. We will first establish the almost monotonicity formula for $\widetilde{I}_{\ep,u}$ from which the corresponding almost monotonicity formula for $I_{\ep,u}$ follows.

For the purpose of making the test vector fields compactly supported, we let $\varphi: \mathbb{R} \to \mathbb{R}$ be any smooth decreasing cutoff function such that $\varphi(t)=1$ for $t \leq \frac{1}{2}$ and $\varphi(t) =0$ for $t \geq 1$. For any $x \in \widetilde{M}$ and $0<\rho < \inj_g(x)$, the function $\varphi \left( \frac{\dist_g(x,\cdot)}{\rho} \right)$ is a smooth function which is compactly supported inside $B_\rho(x)$.

From now on, we fix $x \in U \cap \overline{\Omega}$ and denote $r(\cdot):=\dist_g(x,\cdot)$. For any $0 < \rho < \min\{\dist_g(x,\partial U),i_0\}$, we consider the test vector field $X \in \mathfrak{X}_c(U)$ defined by
\[ X:= \varphi \left( \frac{r}{\rho} \right) r \nabla r \]
which is a truncated radial vector field centered at $x$ supported inside $B_\rho(x)$. When $\rho < \frac{1}{\sqrt{k_0}}$, a direct computation together with \eqref{E:Hessian-comparison} and \eqref{E:Laplacian-comparison} gives 
\[ \left| \Div_g X(x) -  \left(n \varphi  +\frac{r}{\rho} \varphi' \right) \right| \leq (n-1) \sqrt{k_0} r \varphi \] 
and
\[ \left| \nabla X(\nabla u, \nabla u) - \left( |\nabla u|^2 \varphi + \frac{r}{\rho} (\nabla u \cdot \nabla r)^2 \varphi' \right) \right| \leq \sqrt{k_0} r |\nabla u|^2 \varphi \]
where $\varphi$ and $\varphi'$ are both evaluated at $\frac{r}{\rho}$ by abuse of notation. Observing the relation
\[ \rho \frac{d}{d\rho} \left[  \varphi\left( \frac{r}{\rho} \right) \right] = - \frac{r}{\rho} \varphi' \left( \frac{r}{\rho} \right).\]
and plugging the above estimates into the left hand side of the Pohozaev identity \eqref{E:Pohozaev-1}, we obtain the inequality
\begin{equation}
\label{E:Pohozaev-2}
\begin{aligned}
\frac{d}{d\rho} & \left[ \rho^{-(n-1)} \int_{\Omega \cap U} \widetilde{e}_\ep(u) \varphi \right] -\rho^{-(n-1)} \frac{d}{d\rho} \left[ \int_{\Omega \cap U} \ep (\nabla u \cdot \nabla r)^2 \varphi \right] + \rho^{-n} \int_{\Omega \cap U} \widetilde{\xi}_\ep(u) \varphi \\
\geq  & - (n+1) \sqrt{k_0} \left[  \rho^{-(n-1)} \int_{\Omega \cap U} \widetilde{e}_\ep(u) \varphi \right] - \rho^{-(n-1)} \int_{\partial \Omega \cap U} \widetilde{e}_\ep(u) |\nabla r \cdot \nu| \varphi \; d\mathcal{H}^{n-1}.
\end{aligned}
\end{equation} 
Note that we have used the fact that $r\leq \rho$ in the support of $\varphi$ and $\ep |\nabla u|^2 \leq 2 \widetilde{e}_\ep(u)$. We would like to consider two separate cases:
\begin{itemize}
\item \underline{Case 1:} $x \in \Omega \cap U$ and $B_\rho(x) \Subset \Omega \cap U$;
\item \underline{Case 2:} $x \in \partial \Omega \cap U$ $B_\rho(x) \Subset U$.
\end{itemize}

In the first case, there is no boundary term as $B_\rho(x) \cap \partial \Omega = \emptyset$. We obtain the following interior almost-monotonicity inequality (c.f. \cite[Appendix B]{Guaraco2018}), which reduces to \cite[Lemma 3.1]{Hutchinson2000} when $k_0=0$.
\begin{lemma}[Interior almost-monotonicity inequality]
\label{L:interior-mono-1}
For any $x \in \Omega \cap U$ and $0<\rho<  \min\{i_0,\frac{1}{\sqrt{k_0}}\}$ such that $B_\rho(x) \Subset \Omega \cap U$, we have
\begin{equation}
\label{E:interior-mono-1}
\frac{d}{d\rho} \left[ e^{(n+1)\sqrt{k_0} \rho} \rho^{-(n-1)} \int_{B_\rho(x)} \widetilde{e}_\ep(u) \right]  \geq e^{(n+1)\sqrt{k_0} \rho} \rho^{-n} \int_{B_\rho(x)} -\widetilde{\xi}_\ep(u) .
\end{equation}
\end{lemma}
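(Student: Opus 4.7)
The plan is to specialize inequality (4.6) to Case 1, in which $B_\rho(x) \cap \partial\Omega = \emptyset$, and then dispose of the radial-gradient term by a sign observation. Since the boundary integral in (4.6) vanishes identically in this case, what remains is
\begin{align*}
\frac{d}{d\rho}\!\left[\rho^{-(n-1)}\!\int \widetilde{e}_\ep(u)\,\varphi\right] &- \rho^{-(n-1)}\frac{d}{d\rho}\!\left[\int \ep(\nabla u\cdot\nabla r)^2\,\varphi\right] + \rho^{-n}\!\int \widetilde{\xi}_\ep(u)\,\varphi \\
&\geq -(n+1)\sqrt{k_0}\,\rho^{-(n-1)}\!\int \widetilde{e}_\ep(u)\,\varphi,
\end{align*}
where $\varphi=\varphi(r/\rho)$ is the radial cutoff fixed just before (4.6), and all integrals are over $\Omega\cap U$.

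The key geometric observation is that the radial-gradient term is monotone non-decreasing in $\rho$: since $\varphi$ is a \emph{decreasing} smooth cutoff, for each fixed $r$ the value $\varphi(r/\rho)$ is non-decreasing in $\rho$, and the integrand $\ep(\nabla u\cdot\nabla r)^2$ is non-negative. Consequently $\frac{d}{d\rho}\!\int \ep(\nabla u\cdot\nabla r)^2\,\varphi(r/\rho)\geq 0$, so this term may be discarded from the left-hand side to yield a stronger inequality. Rearranging,
\[
\frac{d}{d\rho}\!\left[\rho^{-(n-1)}\!\int \widetilde{e}_\ep(u)\,\varphi\right] + (n+1)\sqrt{k_0}\,\rho^{-(n-1)}\!\int \widetilde{e}_\ep(u)\,\varphi \;\geq\; \rho^{-n}\!\int \bigl(-\widetilde{\xi}_\ep(u)\bigr)\,\varphi,
\]
and the left-hand side is exactly the derivative of the integrating-factor quantity $e^{(n+1)\sqrt{k_0}\rho}\rho^{-(n-1)}\!\int \widetilde{e}_\ep(u)\,\varphi$ divided by $e^{(n+1)\sqrt{k_0}\rho}$. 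This produces the claimed estimate with $\varphi$ in place of $\mathbf{1}_{B_\rho(x)}$.

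It remains to remove the cutoff $\varphi$. The plan is to take a monotone sequence $\{\varphi_k\}$ of smooth decreasing cutoffs with $\varphi_k \nearrow \mathbf{1}_{[0,1)}$ pointwise, apply the inequality above to each $\varphi_k$, and pass to the limit. To sidestep the interchange of limit and $\tfrac{d}{d\rho}$, I would first integrate the smooth inequality on an arbitrary subinterval $[\rho_1,\rho_2] \Subset (0,\min\{i_0,1/\sqrt{k_0}\})$, apply monotone convergence on both sides (the left-hand side uses $\widetilde{W}(u)\geq 0$ so $\widetilde{e}_\ep(u)\geq 0$; the right-hand side uses the integrability of $\widetilde{\xi}_\ep(u)$ on $B_{\rho_2}(x)$), and then recover the differential form by noting that the resulting integrated function of $\rho$ is absolutely continuous on compact subintervals. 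The main bookkeeping subtlety — rather than a genuine obstacle — is ensuring that all the error terms from the Hessian/Laplacian comparison estimates \eqref{E:Hessian-comparison}--\eqref{E:Laplacian-comparison} are absorbed into the factor $(n+1)\sqrt{k_0}$, which is already tracked in the statement of (4.6). The boundary-case version and the analogous estimate for $I_{\ep,u}$ (i.e., $W$ in place of $\widetilde{W}$) will then follow from the same argument, the former by retaining the boundary integral and estimating it via \eqref{E:allard-estimate}.
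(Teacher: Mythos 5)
Your proof is correct and follows the same route as the paper's (one-line) proof of Lemma \ref{L:interior-mono-1}: drop the non-negative term $\rho^{-(n-1)}\frac{d}{d\rho}\int \ep(\nabla u\cdot\nabla r)^2\varphi$ and the boundary term from \eqref{E:Pohozaev-2}, apply the integrating factor, and let $\varphi\to\mathbf{1}_{(-\infty,1)}$. You have usefully spelled out the sign observation (monotonicity of $\varphi(r/\rho)$ in $\rho$) that the paper leaves implicit, and added a careful justification of the cutoff limit; these are refinements of the same argument, not a different approach.
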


\begin{proof}
It follows directly from \eqref{E:Pohozaev-2} by dropping the non-negative term and the boundary term. Finally, we let $\varphi \to \mathbf{1}_{(-\infty,1)}$.
\end{proof}

For the second case, we need to handle the extra boundary term in \eqref{E:Pohozaev-2}. The following lemma allows us to rewrite the boundary term in terms of an interior integral (and its derivative). 

\begin{lemma}
\label{L:boundary-estimate}
For any $x \in \partial \Omega \cap U$ and $0<\rho<  \min\{i_0,\frac{1}{2\kappa_0}\}$ such that $B_\rho(x) \Subset U$, we have
\begin{equation}
\label{E:boundary-estimate}
\int_{\partial \Omega \cap B_\rho(x)} \widetilde{e}_\ep(u) \; d \mathcal{H}^{n-1} \leq (n+1)C_2 \int_{\Omega \cap B_\rho(x)} \widetilde{e}_\ep(u) + 6 \frac{d}{d\rho} \left[ \int_{\Omega \cap B_\rho(x)} \widetilde{e}_\ep(u)  \right].
\end{equation}
\end{lemma}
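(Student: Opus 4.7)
The plan is to apply the Pohozaev identity \eqref{E:Pohozaev-1} to a compactly-supported test vector field engineered so that, on $\partial \Omega$, it reproduces $\mathbf{1}_{B_\rho(x)} \nu$ in the limit. Setting $r(\cdot) = \dist_g(\cdot, x)$ and $d(\cdot) = \dist_g(\cdot, \partial \Omega)$, I would take
\[
X := -\varphi(r/\rho)\, \nabla d,
\]
where $\varphi: [0,\infty) \to [0,1]$ is a smooth decreasing cutoff approximating $\mathbf{1}_{[0,1)}$. The hypotheses $x \in \partial \Omega$ and $\rho < \min\{i_0, 1/(2\kappa_0)\}$ guarantee $B_\rho(x) \cap \overline{\Omega} \subset U_{1/(2\kappa_0)}(\partial \Omega)$, so $d$ is smooth there and satisfies \eqref{E:dist-estimates}; multiplying $\nabla d$ by a cutoff supported in this tubular neighborhood (for instance $\chi_a'(d)$ with $a$ slightly larger than $\rho$) produces a valid $X \in \mathfrak{X}_c(U)$. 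Since $\nabla d = -\nu$ on $\partial \Omega$, one has $X \cdot \nu = \varphi(r/\rho)$, so as $\varphi \to \mathbf{1}_{[0,1)}$ the right-hand side of \eqref{E:Pohozaev-1} tends to exactly $\int_{\partial \Omega \cap B_\rho(x)} \widetilde{e}_\ep(u)\, d\mathcal{H}^{n-1}$.

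A direct calculation yields
\[
\Div_g X = -\tfrac{1}{\rho} \varphi'(r/\rho)\, (\nabla r \cdot \nabla d) - \varphi(r/\rho)\, \Delta d,
\]
\[
\nabla X(\nabla u, \nabla u) = -\tfrac{1}{\rho} \varphi'(r/\rho)\, (\nabla r \cdot \nabla u)(\nabla d \cdot \nabla u) - \varphi(r/\rho)\, \Hess d(\nabla u, \nabla u).
\]
Using $|\nabla r| = |\nabla d| = 1$, together with $\|\Hess d\| \leq C_1$ and $|\Delta d| \leq (n-1) C_1$ from \eqref{E:dist-estimates}, and the elementary bound $\ep |\nabla u|^2 \leq 2 \widetilde{e}_\ep(u)$, I would obtain the pointwise estimate
\[
\bigl| \widetilde{e}_\ep(u) \Div_g X - \ep \nabla X(\nabla u, \nabla u) \bigr| \leq \tfrac{3}{\rho} |\varphi'(r/\rho)|\, \widetilde{e}_\ep(u) + (n+1) C_2\, \varphi(r/\rho)\, \widetilde{e}_\ep(u),
\]
valid for some constant $C_2 = C_2(n, k_0, \kappa_0)$, recalling that $\varphi' \leq 0$.

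Letting $\varphi$ approximate $\mathbf{1}_{[0,1)}$ through a family $\varphi_\delta$ linear on $[1, 1+\delta]$, the $\varphi(r/\rho)$-term converges to $(n+1)C_2 \int_{\Omega \cap B_\rho(x)} \widetilde{e}_\ep(u)$ by monotone convergence, while the coarea formula converts the $|\varphi'(r/\rho)|$-term into the difference quotient $\tfrac{1}{\rho\delta}(F(\rho(1+\delta)) - F(\rho))$ where $F(\rho) := \int_{\Omega \cap B_\rho(x)} \widetilde{e}_\ep(u)$; this tends to $F'(\rho) = \tfrac{d}{d\rho}\int_{\Omega \cap B_\rho(x)} \widetilde{e}_\ep(u)$ wherever the monotone function $F$ is differentiable. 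Absorbing the numerical constant $3$ into a slightly larger $6$ to accommodate the approximation then produces the stated inequality. The principal technical subtlety is rigorously justifying the passage to the limit at every admissible $\rho$ (not merely almost every), which is handled by working with the right Dini derivative of the monotone function $F$ throughout the argument and observing that the inequality persists in the limit.
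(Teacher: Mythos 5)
Your proposal is correct and follows essentially the same route as the paper: apply the Pohozaev identity \eqref{E:Pohozaev-1} to $X = -\varphi(r/\rho)\nabla d$, estimate $\Div_g X$ and $\nabla X(\nabla u,\nabla u)$ via \eqref{E:dist-estimates}, and send $\varphi \to \mathbf{1}_{[0,1)}$. Two small remarks: your cutoff $\varphi_\delta$ (linear on $[1,1+\delta]$) actually produces the sharper constant $3$ rather than $6$ --- the paper's $6 = 3\cdot 2$ comes from its cutoff having $\varphi'$ supported on $[1/2,1]$, so $\rho/r \leq 2$ there, not from an approximation loss; and the concern about Dini derivatives is unnecessary, since $F(\rho) = \int_{\Omega \cap B_\rho(x)} \widetilde{e}_\ep(u)$ is a smooth function of $\rho$ ($\widetilde{e}_\ep(u)$ being continuous and bounded on the relevant compact set), with $F'(\rho) = \int_{\Omega \cap \partial B_\rho(x)} \widetilde{e}_\ep(u)\, d\mathcal{H}^{n-1}$ at every admissible $\rho$.
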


\begin{proof}
Let $\varphi$ be the radial cutoff function centered at $x$ as defined before. For any $0 < \rho < \min\{\dist_g(x,\partial U),\frac{1}{2\kappa_0}\}$, we can consider the test vector field $X \in \mathfrak{X}_c(U)$ defined by 
\[ X=-\varphi \left( \frac{r}{\rho} \right) \nabla d\]
where $d$ is the signed distance function from $\partial \Omega$ as defined in section \ref{SS:extrinsic-geom}. Note that $X$ is smooth and compactly supported in $B_\rho(x)$ with $X \cdot \nu=\varphi$ along $\partial \Omega$. By \eqref{E:dist-estimates}, we have the following estimates:
\[ \Div_g X \leq  (n-1) C_1 \varphi   - \frac{1}{\rho} \varphi' \]
\and
\[ -\ep \nabla X(\nabla u, \nabla u) \leq \left( C_1 \varphi - \frac{1}{\rho} \varphi' \right) \ep | \nabla u|^2.\]
Putting all these into the Pohozaev identity \eqref{E:Pohozaev-1}, noting that $r \geq \frac{\rho}{2}$ in the support of $\varphi'$ and $\ep |\nabla u|^2 \leq 2 \widetilde{e}_\ep(u)$, and then letting $\varphi \to \mathbf{1}_{(-\infty,1)}$, we obtain the desired inequality.
\end{proof}

We are now ready to establish the almost-monotonicity inequality centered at a boundary point $x \in \partial \Omega$.

\begin{lemma}[Boundary almost-monotonicity inequality]
\label{L:boundary-mono-1}
There exists a constant 
\[c_1=c_1(n,k_0,\kappa_0)\geq 0\]
so that the following holds: for any $x \in \partial \Omega \cap U$ and $0<\rho<  \frac{1}{2} \min\{i_0,\frac{1}{\kappa_0}\}$ such that $B_\rho(x) \Subset U$, we have
\begin{equation}
\label{E:boundary-mono-1}
\frac{d}{d\rho} \left[ e^{c_1 \rho} \rho^{-(n-1)} \int_{\Omega \cap B_\rho(x)} \widetilde{e}_\ep(u) \right]  \geq \frac{e^{c_1 \rho}}{1+3 \kappa_0 \rho} \rho^{-n} \int_{\Omega \cap B_\rho(x)} -\widetilde{\xi}_\ep(u) .
\end{equation}
\end{lemma}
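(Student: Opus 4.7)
The plan is to follow the strategy of Lemma \ref{L:interior-mono-1}, starting again from \eqref{E:Pohozaev-2}, but now to absorb the extra boundary integral
\[ B(\rho) := \rho^{-(n-1)} \int_{\partial\Omega \cap B_\rho(x)} \widetilde{e}_\ep(u)\, |\nabla r \cdot \nu| \; d\mathcal{H}^{n-1}, \]
which is present precisely because the center $x$ lies on $\partial \Omega$. The crucial observation is that the Allard-type estimate \eqref{E:allard-estimate} gives $|\nabla r \cdot \nu| \leq (\kappa_0/2) r \leq (\kappa_0/2) \rho$ on $\partial\Omega \cap B_\rho(x)$, so the boundary integrand is small by an extra factor of $\rho$ compared to the interior contribution. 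No sign assumption on $\partial \Omega$ (hence no convexity) is used: the one-sided estimate \eqref{E:allard-estimate} alone suffices.

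Setting $E(\rho) := \int_{\Omega \cap B_\rho(x)} \widetilde{e}_\ep(u)$ and $F(\rho) := \rho^{-(n-1)} E(\rho)$, I would pass $\varphi \to \mathbf{1}_{(-\infty,1)}$ in \eqref{E:Pohozaev-2} and drop the nonpositive term $-\rho^{-(n-1)} \tfrac{d}{d\rho}\!\left[\int \ep(\nabla u \cdot \nabla r)^2\right]$ (since this quantity is the negative of a nondecreasing function of $\rho$, removing it from the left-hand side only strengthens the inequality) to arrive, for a.e.\ $\rho$, at
\[ F'(\rho) + (n+1)\sqrt{k_0}\, F(\rho) \; \geq \; -\,\rho^{-n} \int_{\Omega \cap B_\rho(x)} \widetilde{\xi}_\ep(u) \; - \; B(\rho). \]

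Next, combining the pointwise bound $|\nabla r \cdot \nu| \leq (\kappa_0/2)\rho$ with Lemma \ref{L:boundary-estimate} gives
\[ B(\rho) \;\leq\; \frac{(n+1) C_2\, \kappa_0 \rho}{2}\, F(\rho) \; + \; 3\kappa_0\rho \cdot \rho^{-(n-1)} E'(\rho). \]
Using the identity $\rho^{-(n-1)} E'(\rho) = F'(\rho) + (n-1)\rho^{-1} F(\rho)$ and moving the resulting $3\kappa_0\rho\, F'(\rho)$ term onto the left-hand side, one reaches
\[ (1 + 3\kappa_0\rho)\, F'(\rho) \; + \; \tilde{c}_1\, F(\rho) \; \geq \; -\,\rho^{-n} \int_{\Omega \cap B_\rho(x)} \widetilde{\xi}_\ep(u), \]
where the lower-order coefficient $\tilde{c}_1 = \tilde{c}_1(n, k_0, \kappa_0)$ collects $(n+1)\sqrt{k_0}$, $3(n-1)\kappa_0$, and the uniformly bounded $\tfrac{(n+1)C_2\kappa_0 \rho}{2}$ term (the hypothesis $\kappa_0 \rho \leq 1/2$ makes this last coefficient uniform).

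To conclude, I would divide through by $(1 + 3\kappa_0\rho)$, use $F \geq 0$ together with $(1+3\kappa_0\rho)^{-1} \leq 1$ to bound the coefficient of $F$ on the left by the larger constant $\tilde{c}_1$, and then apply the integrating factor $e^{c_1 \rho}$ with $c_1 := \tilde{c}_1$, which recognizes the left-hand side as $\tfrac{d}{d\rho}\bigl[e^{c_1 \rho} F(\rho)\bigr]$ and yields \eqref{E:boundary-mono-1}. I expect the main (minor) technical subtlety to be the a.e.\ differentiability of the monotone function $E(\rho)$: to make the above ODE manipulations rigorous one should keep the smooth cutoff $\varphi$ in place until the last step and interpret the resulting distributional inequality through integration, or alternatively work pointwise at those $\rho$ where $E$ is differentiable and integrate against the exponential factor.
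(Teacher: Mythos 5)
Your proof is correct and follows essentially the same route as the paper: both bound the boundary integral in \eqref{E:Pohozaev-2} using the Allard-type estimate \eqref{E:allard-estimate} together with Lemma \ref{L:boundary-estimate}, absorb the resulting $3\kappa_0\rho\,F'(\rho)$ term into the left-hand side to produce the $(1+3\kappa_0\rho)$ factor, and use $\kappa_0\rho<\tfrac12$ to get a uniform constant $c_1$. As a small aside, $E(\rho)$ is in fact smooth in $\rho$ (by the coarea formula with a continuous integrand), so the regularity subtlety you flag at the end does not actually arise; and your $\tilde{c}_1$ correctly retains the $(n+1)\sqrt{k_0}$ contribution from \eqref{E:Pohozaev-2}, which the paper's displayed formula for $c_1$ appears to omit (a harmless typo, since only the existence of some $c_1(n,k_0,\kappa_0)$ is claimed).
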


\begin{proof}
First of all, note that by \eqref{E:allard-estimate}, we have the following estimate on the boundary term of \eqref{E:Pohozaev-2} for any $\rho<\frac{1}{2} i_0$, 
\[ \rho^{-(n-1)} \int_{\partial \Omega \cap B_\rho(x)} \widetilde{e}_\ep(u) |\nabla r \cdot \nu| \; d\mathcal{H}^{n-1} \leq \frac{\kappa_0}{2} \rho^{-(n-2)} \int_{\partial \Omega \cap B_\rho(x)} \widetilde{e}_\ep(u) \; d\mathcal{H}^{n-1}.  \]
On the other hand, Lemma \ref{L:boundary-estimate} implies that when $\rho < \min\{i_0,\frac{1}{2\kappa_0}\}$, we have the inequality
\begin{equation*}
\begin{aligned}
6 \rho \frac{d}{d \rho} \left[ \rho^{-(n-1)} \int_{\Omega \cap B_\rho(x)} \widetilde{e}_\ep(u) \right] \geq &  \rho^{-(n-2)} \int_{\partial \Omega \cap B_\rho(x)} \widetilde{e}_\ep(u) \; d\mathcal{H}^{n-1} \\
& - [(n+1)C_1 \rho + 6(n-1)] \rho^{-(n-1)} \int_{\Omega \cap B_\rho(x)} \widetilde{e}_\ep (u) . 
\end{aligned}
\end{equation*}
Putting it back to \eqref{E:Pohozaev-2}, using $\rho \kappa_0 < \frac{1}{2}$, after letting $\varphi \to \mathbf{1}_{(-\infty,1)}$ and rearranging terms gives the desired inequality with $c_1=\frac{n+1}{4} C_1 + 3(n-1) \kappa_0$. 
\end{proof}

\begin{remark}
    As noted in the Introduction, it would be possible to mimic the approach of \cite{Grueter1986} and \cite{Tonegawa2003}, by reflecting across the boundary $\partial\Omega$, and prove an almost-monotonicity inequality for the sum of the energy density ratio on a ball centered in $\overline{\Omega}$ and its reflection. We chose, for the sake of simplicity, to avoid reflection across the boundary entirely, and consider balls that are either disjoint from $\partial\Omega$, or centered at points lying on $\partial\Omega$, as in \cite{Guang2020}.
\end{remark}

\subsection{Auxiliary estimates}
\label{SS:auxiliary}

We first derive some a-priori estimates for any solution $u$ to \eqref{E:1a} and \eqref{E:1b}. These estimates hold locally uniformly in space \emph{up to the boundary} $\partial \Omega$ and uniformly in $\ep$. We fix throughout this subsection an open subset $\tilde{U} \Subset U$.

We begin with the $C^0$-estimate. Our proof uses a maximum principle argument similar to that in \cite[Proposition 3.2]{Hutchinson2000}, except that we have to include an extra barrier term to prevent the extrema from occurring along the boundary.

\begin{lemma}[Uniform $C^0$ estimate]
\label{L:u-C0-bound}
There exist constants 
\[ \ep_2=\ep_2(\Lambda_0, C_0, W, n, i_0, k_0, \kappa_0, \dist_g(\tilde{U},\partial U))>0,\]
\[ c_2=c_2(\Lambda_0, C_0, W, n, i_0, k_0, \kappa_0, \dist_g(\tilde{U},\partial U))\geq 1\]
such that for all $0 < \ep < \ep_2$, we have 
\begin{equation}
\label{E:u-C0-bound}
 \|u\|_{C^0(\overline{\Omega} \cap \tilde{U})} \leq 1 +c_2 \ep  <2
\end{equation}
\end{lemma}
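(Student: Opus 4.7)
The plan is to adapt the interior maximum-principle argument of Hutchinson-Tonegawa \cite[Proposition 3.2]{Hutchinson2000} with a single modification: insert a barrier to rule out extrema on the inner cutoff boundary $\partial \tilde{U}$, while exploiting the Neumann condition \eqref{E:1b} to treat extrema on $\partial \Omega$ by exactly the same mechanism as interior extrema. The key observation is that, thanks to Neumann, a maximum of $u$ at $x_0 \in \partial \Omega$ still yields $\Delta u(x_0) \leq 0$. Indeed, in Fermi coordinates $(y,t)$ centered at $x_0$ (so that $g = dt^2 + g_t(y)$ and the inward normal is $\partial_t$), the condition $\partial u/\partial \nu = 0$ reads $\partial_t u(y_0,0) = 0$; a second-order Taylor expansion of $t \mapsto u(y_0,t)$ on $\{t \geq 0\}$ then forces $\partial_t^2 u(y_0,0) \leq 0$, and since $y_0$ maximizes $u|_{\partial \Omega}$ one has $\Delta^{\partial \Omega}_{g_0} u(y_0) \leq 0$. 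The Fermi splitting $\Delta_g = \partial_t^2 + H\,\partial_t + \Delta_{g_t}$ together with the vanishing of $\partial_t u$ at $x_0$ then gives $\Delta u(x_0) \leq 0$.

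With this in hand, at any maximum $x_0 \in \overline{\Omega} \cap \tilde{U}$ that avoids $\partial \tilde{U}$, equation \eqref{E:1a} yields
\[
\frac{W'(u(x_0))}{\ep} = \lambda + \ep\,\Delta u(x_0) \leq \Lambda_0.
\]
The hypothesis $W''(t) \geq \kappa$ for $|t| \geq \alpha$ combined with $W'(1) = 0$ gives $W'(t) \geq \kappa(t-1)$ for every $t \geq 1$, so $u(x_0) \leq 1 + \Lambda_0 \ep / \kappa$. Choosing $c_2 > \Lambda_0 / \kappa$ thus excludes $u(x_0) > 1 + c_2 \ep$ at any such $x_0$. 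Applying the same argument to $-u$, with the reflected potential $\widetilde{W}(s) := W(-s)$ and parameter $-\lambda$, symmetrically excludes $u(x_0) < -(1 + c_2 \ep)$.

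It remains to rule out extrema on $\overline{\Omega} \cap \partial \tilde{U}$, where only the crude a priori bound $|u| \leq C_0$ is available; this is precisely where a barrier enters. Fix an intermediate open set $\tilde{U} \Subset \tilde{U}' \Subset U$ with $\dist_g(\tilde{U}, \partial \tilde{U}')$ comparable to $\dist_g(\tilde{U}, \partial U)$, and construct $\phi \in C^2(\overline{\Omega} \cap \overline{\tilde{U}'})$ satisfying $\phi \geq 0$, $\phi \equiv 0$ on $\tilde{U}$, $\phi \geq C_0 + 2$ on $\overline{\Omega} \cap \partial \tilde{U}'$, and $\partial \phi / \partial \nu \equiv 0$ on $\partial \Omega \cap \tilde{U}'$. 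The last property is enforced by making $\phi$ constant along the normal fibers of $\partial \Omega$ in a tubular neighborhood (admissible thanks to the ball curvature bound $\kappa_0$), and $\|\phi\|_{C^2}$ is controlled in terms of $C_0$, $\dist_g(\tilde{U}, \partial U)$, $k_0$, $\kappa_0$. Setting $\psi := 1 + c_2 \ep + \phi$, the convexity estimate $W'(\psi) \geq \kappa(c_2 \ep + \phi)$ makes $\psi$ a strict super-solution of \eqref{E:1a} on $\Omega \cap \tilde{U}'$ provided $c_2 \kappa > \Lambda_0 + 1$ and $\ep \|\Delta \phi\|_{L^\infty} < 1$. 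Applying the Neumann-adapted maximum-principle argument to $v := u - \psi$ (which, by construction of $\phi$, still has vanishing normal derivative on $\partial \Omega$), any positive maximum of $v$ inside $\Omega \cap \tilde{U}'$ or on $\partial \Omega \cap \tilde{U}'$ is ruled out by linearizing $W'(u) - W'(\psi) = W''(\theta) v$ with $\theta \geq 1$ (so $W''(\theta) \geq \kappa$), whereas on $\overline{\Omega} \cap \partial \tilde{U}'$ one has $v < 0$ by construction. Hence $v \leq 0$ on $\overline{\Omega} \cap \tilde{U}' \supset \tilde{U}$, which yields $u \leq 1 + c_2 \ep$ on $\tilde{U}$; requiring in addition $\ep < 1/c_2$ secures the stated bound $1 + c_2 \ep < 2$.

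The main obstacle is the clean construction of the barrier $\phi$ with all three listed properties simultaneously and with a $C^2$ norm controlled purely in terms of the stated geometric data; once that is in place, the remainder of the argument is a careful but essentially routine adaptation of the Hutchinson-Tonegawa interior estimate.
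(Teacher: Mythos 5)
Your proof is correct but takes a genuinely different route from the paper's at the boundary $\partial\Omega$. The paper localizes: for each $x\in\overline{\Omega}\cap\tilde{U}$ it builds an auxiliary function on a small ball $B_{3\rho}(x)$, using a radial barrier $\zeta$ to keep the maximum off $\partial B_{3\rho}(x)$, and then adds $\frac{4C_0}{\rho}\,\chi_a\circ d$ (a truncated distance-to-$\partial\Omega$) precisely so that $\partial f/\partial\nu \leq -2C_0\rho^{-1}<0$ along $\partial\Omega$, mechanically pushing the maximum off $\partial\Omega$ into $\Omega$. You instead observe that the Neumann condition already renders $\partial\Omega$ ``invisible'' to the maximum principle: if $v$ has $\partial_\nu v=0$ and attains a maximum at $x_0\in\partial\Omega$, the Fermi-coordinate decomposition $\Delta_g = \partial_t^2 + H\,\partial_t + \Delta_{g_t}$ together with $\partial_t v(x_0)=0$, $\partial_t^2 v(x_0)\leq 0$, and $\Delta_{g_0}(v|_{\partial\Omega})(x_0)\leq 0$ gives $\Delta v(x_0)\leq 0$, exactly as for an interior maximum; no $\partial\Omega$-barrier is needed. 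This is cleaner and conceptually sharper (it is the reflection principle for Neumann boundaries) and unifies the interior and boundary cases, but it transfers the technical burden to your cutoff $\phi$: you must now build $\phi$ vanishing on $\tilde{U}$, dominating $C_0+2$ on $\partial\tilde{U}'$, \emph{and} with $\partial_\nu\phi=0$ on $\partial\Omega$---the last condition being essential because your Fermi-coordinate argument must be applied to $v=u-\psi$, not to $u$ alone. You correctly flag this as the main obstacle: the construction is achievable (make $\phi$ constant along normal fibers in a thin collar $\{d<\tau\}$ with $\tau\ll\dist_g(\tilde{U},\partial\tilde{U}')$, glue to an ordinary cutoff away from the collar, and insist $\phi\equiv C_0+2$ on a full collar-width neighborhood of $\partial\tilde{U}'$ to control the corner $\partial\tilde{U}'\cap\partial\Omega$), but it is more delicate than the paper's explicit local barriers whose $C^2$ bounds follow immediately from \eqref{E:dist-estimates} and the radial structure. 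In short, you buy a simpler boundary maximum-principle step at the price of a more intricate cutoff.
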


\begin{proof}
Let $x \in \overline{\Omega} \cap \tilde{U}$ and $\rho< \frac{1}{6} \min\{i_0, \frac{1}{\sqrt{k_0}}, \frac{1}{\kappa_0}, \dist_g(x,\partial U)\}$. Denote $r(\cdot):=\dist_g(x,\cdot)$. Inside the ball $B_{3 \rho}(x) \Subset U$, we fix a smooth radial function $\zeta=\zeta(r)$ such that 
\begin{itemize}
\item $\zeta = 1+\frac{c_2 \ep}{2}$ inside $B_\rho(x)$;
\item $\zeta = 1+C_0$ on the annulus $B_{3\rho} (x) \setminus B_{2\rho} (x)$; and
\item inside the ball $B_{3 \rho}(x)$, we have by \eqref{E:Laplacian-comparison}
\[ 1+\frac{c_2 \ep}{2} \leq \zeta \leq 1+C_0, \quad |\nabla \zeta| \leq 2C_0 \rho^{-1}, \quad |\Delta \zeta| \leq C(n) C_0 \rho^{-2}\]
\end{itemize}
where $c_2$ will be chosen later and then $\ep$ is sufficiently small so that $c_2 \ep < \min\{1,C_0\}$. 

We first claim that $u \leq 1+c_2 \ep$ on $B_\rho(x) \cap \Omega$ for all sufficiently small $\rho$ by a contradiction argument. Suppose we have $\sup_{B_\rho(x) \cap \Omega} u > 1+c_2 \ep$. We divide into two cases as before:
\begin{itemize}
\item \underline{Case 1:} $x \in \Omega \cap \tilde{U}$ and $B_{3\rho}(x) \Subset \Omega \cap U$;
\item \underline{Case 2:} $x \in \partial \Omega \cap \tilde{U}$ and $B_{3\rho}(x) \Subset U$.
\end{itemize}
For the first case, we consider as in \cite[Proposition 3.2]{Hutchinson2000} the auxiliary function $f:=u-\zeta$ which is smooth inside $B_{3 \rho}(x)$. One can verify
\begin{itemize}
\item $f \leq -1$ along the boundary $\partial B_{3 \rho}(x)$, and
\item $\sup_{B_{3\rho}(x)} f > \frac{c_2 \ep}{2}>0$.
\end{itemize}
Therefore, $f(x_0)=\sup_{B_{3\rho}(x)} f> \frac{c_2 \ep}{2}$ at an interior maxima $x_0 \in B_{3\rho}(x)$. By maximum principle, at $x_0$ we have
\begin{equation}
\begin{aligned}
0 \geq  \ep \Delta f = & \ep^{-1} W'(u) -\lambda -\ep \Delta \zeta \\
= & \ep^{-1} \left. W'(tu+(1-t)\zeta) \right|_{t=0}^{t=1} + \ep^{-1} W'(\zeta) - \lambda -\ep \Delta \zeta   \\
\geq & \ep^{-1} f \int_0^1  W''(tu+(1-t) \zeta)  \; dt - \lambda -\ep \Delta \zeta \\
\geq &\frac{c_2 \kappa}{2} - \lambda - \ep \Delta \zeta ,
\end{aligned}
\end{equation}
Note that we have used that $u > \zeta \geq 1$ at $x_0$ and the properties of $W$ in section \ref{SS:double-well}. If $c_2 \geq 1$ is sufficiently large, depending only on $\Lambda_0$, $C_0$, $W$, $n$, $i_0$, $k_0$, $\dist_g(\tilde{U},\partial U)$ and $\dist_g(x,\partial \Omega)$, we arrive at a contradiction.

For the second case, we consider another auxiliary function
\[ f:=u -\zeta + \frac{4 C_0}{\rho} \chi_a \circ d \]
where $d(\cdot)=\dist_g(\cdot, \partial \Omega)$ and $\chi$ is the cutoff function defined in section \ref{SS:cutoff} with $a=\frac{\ep \rho}{16 C_0}$. One can again verify
\begin{itemize}
\item $f \leq -1+\frac{\ep}{2}< -\frac{1}{2}$ along the boundary $\partial B_{3 \rho}(x) \cap \Omega$;
\item $\sup_{B_{3\rho}(x) \cap \Omega} f > \frac{c_2 \ep}{2}>0$; and 
\item $\frac{\partial f}{\partial \nu}\leq -2 C_0 \rho^{-1}<0$ along $B_{3\rho}(x) \cap \partial \Omega$.
\end{itemize}
Hence, $\sup_{B_{3\rho}(x) \cap \Omega} f$ is again achieved by an interior maxima $x_0 \in B_{3\rho}(x) \cap \Omega$. Applying the maximum principle at $x_0$, a similar calculation as before (note that we still have $u\geq 1$ and $\zeta>1$ by our choice of $a$) gives
\[ \frac{c_2 \kappa}{2} \leq \lambda+ \ep \Delta \zeta + \frac{1}{2} \max_{t \in [1,1+C_0]} W''(t) -\frac{4 \ep C_0}{\rho} \Delta( \chi_a \circ d).\]
By the definition of $\chi_a$ in section \ref{SS:cutoff} and \eqref{E:dist-estimates}, we will again obtain a contradiction when $c_2$ is sufficiently large, depending only on $\Lambda_0$, $C_0$, $W$, $n$, $i_0$, $k_0$, $\kappa_0$ and $\dist_g(\tilde{U},\partial U)$. This proves the desired upper bound for $u$. The lower bound can be obtained similarly by considering the auxiliary function $f=u+\zeta$ for Case 1 and $f=u+\zeta-\frac{4 C_0}{\rho} \chi_a \circ d$ for Case 2. 
\end{proof}

Using the $C^0$-estimate in Lemma \ref{L:u-C0-bound}, one obtains up-to-the-boundary gradient estimates by standard PDE theory of semi-linear elliptic equations.

\begin{lemma}[Uniform gradient estimates]
\label{L:apriori-u-bound}
There exists a constant
\[ c_3=c_3(\Lambda_0, C_0, W, n, i_0, k_0, \kappa_0, \dist_g(\tilde{U},\partial U))\geq 1\]
such that for all $0 < \ep < \ep_2$, we have the gradient estimate 
\begin{equation}
\label{E:apriori-du-bound}
\| \ep \nabla u\|_{C^0(\overline{\Omega} \cap \tilde{U})} \leq c_3.
\end{equation}
\end{lemma}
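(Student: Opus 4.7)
The plan is to combine the uniform $L^\infty$ bound from Lemma \ref{L:u-C0-bound} with a standard rescaling argument at scale $\ep$. Given $x_0 \in \overline{\Omega} \cap \tilde{U}$, I would work in geodesic normal coordinates centered at $x_0$ if $x_0 \in \Omega$, and in Fermi (boundary normal) coordinates adapted to $\partial \Omega$ if $x_0 \in \partial \Omega$. Such a chart $\Phi$ is available on a ball of radius $\rho_0 := \tfrac{1}{4} \min\{i_0, 1/\sqrt{k_0}, 1/\kappa_0, \dist_g(\tilde{U}, \partial U)\}$. Define the rescaled function $v(y) := u(\Phi(\ep y))$, defined on (the $\Omega$-portion of) the ball of radius $\rho_0/\ep$ about the origin. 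The rescaled metric $g_\ep := \ep^{-2}\, \Phi^\ast g(\ep\, \cdot)$ converges smoothly to the Euclidean metric on any fixed compact set with $C^k$ bounds depending only on $k_0$, and in the boundary case the rescaled $\partial \Omega$ converges to a hyperplane with second fundamental form of order $O(\ep \kappa_0)$.

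In these coordinates $v$ satisfies the semilinear equation
\[ -\Delta_{g_\ep} v + W'(v) = \ep \lambda \]
together with the Neumann condition $\partial v / \partial \nu_{g_\ep} = 0$ on the rescaled $\partial \Omega$ in the boundary case. By Lemma \ref{L:u-C0-bound}, $\|v\|_{L^\infty} \leq 2$ uniformly in $\ep$, so the inhomogeneous term $W'(v) - \ep \lambda$ is pointwise bounded by a constant depending only on $W$ and $\Lambda_0$. Regarding $W'(v) - \ep \lambda$ as a bounded right-hand side, interior Schauder theory (when $x_0$ is away from $\partial \Omega$) or boundary Schauder theory for Neumann problems on smooth domains (when $x_0$ lies on or near $\partial \Omega$) yields a uniform estimate $\|v\|_{C^{1,\alpha}(B_1(0) \cap \Phi^{-1}(\Omega))} \leq C$, with $C$ depending only on $\Lambda_0, C_0, W, n, i_0, k_0, \kappa_0$ and $\dist_g(\tilde{U}, \partial U)$. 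Evaluating at the origin and using the chain rule $\nabla v(0) = \ep\, \nabla u(x_0) \cdot d\Phi_0$ together with the uniform bound on $d\Phi_0$ gives the claim.

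The main subtlety is that the boundary Schauder estimate must be uniform in $\ep$. This reduces to checking three uniform bounds: (i) the coefficients of $-\Delta_{g_\ep}$ have $C^{0,\alpha}$-norms controlled independently of $\ep$, which follows from Jacobi field comparisons using $k_0$; (ii) the rescaled $\partial \Omega$ is uniformly $C^{1,\alpha}$-close on unit balls to a hyperplane, which follows from the bound $\kappa_0$ on the second fundamental form; (iii) the Neumann datum is identically zero, so no boundary source enters the estimate. An alternative, more self-contained route would be a Bernstein-type argument applied to $\ep^2 |\nabla u|^2$ (or a suitable sum $\ep^2 |\nabla u|^2 + (u-c)^2$) using the Bochner formula \eqref{E:Bochner} and the maximum principle; this bypasses Schauder theory but forces one to cope carefully with boundary terms obtained by differentiating the Neumann condition, which bring in the second fundamental form of $\partial \Omega$ and thus require an extra barrier term analogous to the one in the proof of Lemma \ref{L:u-C0-bound}.
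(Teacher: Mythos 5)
Your argument is correct, and it supplies the content that the paper only gestures at: the paper's proof of Lemma \ref{L:apriori-u-bound} is a one-liner citing \cite[Proposition 2.19]{Han2011} for the interior estimate and \cite[Theorem 1.1]{Xu2017} for the boundary Neumann estimate, with no details given. Your rescaling-to-scale-$\ep$ argument is exactly the standard mechanism underlying those citations, and the three uniformity checks you list (uniform $C^{0,\alpha}$ control of the rescaled metric coefficients from $k_0$, uniform $C^{1,\alpha}$-flatness of the rescaled boundary from $\kappa_0$, and the homogeneous Neumann datum) are the correct points one must verify in the Riemannian half-space setting.

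One small imprecision worth flagging: the first regularity step, going from $\|v\|_{L^\infty}\le 2$ and a bounded right-hand side $W'(v)-\ep\lambda$ to a $C^{1,\alpha}$ bound on the unit ball, is a Calder\'on--Zygmund $W^{2,p}$ estimate followed by Sobolev embedding, not strictly a Schauder estimate (Schauder would require an \emph{a priori} $C^{0,\alpha}$ bound on the right-hand side, which you only obtain after bootstrapping once $v\in C^{1,\alpha}$). This does not affect the validity of the conclusion, since the $L^p$ theory and boundary Agmon--Douglis--Nirenberg estimates for the Neumann problem give exactly what you need; it is only a question of which elliptic toolbox to invoke at the first step. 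Your remark about the Bernstein/Bochner alternative with an extra barrier term, in analogy with the proof of Lemma \ref{L:u-C0-bound}, is also a viable route and in fact more in the spirit of the maximum-principle arguments used elsewhere in Section \ref{S:monotonicity}, but the rescaling approach is cleaner and matches what the cited references do.
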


\begin{proof}
The interior gradient estimate can be found e.g. in \cite[Proposition 2.19]{Han2011}. The boundary gradient estimates along $\partial \Omega$ follows e.g. from \cite[Theorem 1.1]{Xu2017} (c.f. \cite{Xu2017a} for the extension to the Riemannian setting). 
\end{proof}


We can actually obtain better estimates away from the transition region $\{|u| \leq \alpha \}$ which imply that $\frac{W(u)}{\ep}$ is of order $\mathcal{O}(\ep^{2\eta-1})$ for any $\eta \in (0,1)$.

\begin{lemma}[Refined uniform $C^0$ estimates away from the transition region]
\label{L:refine-estimate}
For any $\eta \in (0,1)$, there exist constants 
\[ \ep_4=\ep_4(\eta,\Lambda_0, C_0, W, n, i_0, k_0, \kappa_0, \dist_g(\tilde{U},\partial U))>0,\]
\[ c_4=c_4(\Lambda_0, C_0, W, n, i_0, k_0, \kappa_0, \dist_g(\tilde{U},\partial U))\geq 1\]
such that for all $0 < \ep < \ep_4$, $x \in \overline{\Omega} \cap \tilde{U}$ and $\ep^{1-\eta} \leq r \leq \frac{1}{6} \min\{i_0, \frac{1}{\sqrt{k_0}}, \frac{1}{\kappa_0}, \dist_g(x,\partial U)\}$, we have
\begin{itemize}
\item[(i)] if $u \geq \alpha$ on $B_{3r}(x) \cap \Omega$, then $1-c_4 \ep^\eta \leq u \leq 1 +c_4 \ep$ on $B_r(x) \cap \Omega$;
\item[(ii)] if $u \leq -\alpha$ on $B_{3r}(x) \cap \Omega$, then $-1-c_4 \ep \leq u \leq -1 +c_4 \ep^\eta$ on $B_r(x) \cap \Omega$.
\end{itemize}

\end{lemma}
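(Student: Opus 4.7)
The strategy is a maximum-principle comparison with an exponentially decaying barrier, after linearizing \eqref{E:1a} around the well $u = 1$. The upper bound $u \leq 1 + c_4 \ep$ in (i) is already contained in Lemma \ref{L:u-C0-bound} upon taking $c_4 \geq c_2$, and case (ii) reduces to case (i) by applying the argument to $\tilde u := -u$, which solves \eqref{E:1a} with $\lambda$ replaced by $-\lambda$. So I focus on the lower bound $u \geq 1 - c_4 \ep^\eta$ of (i).

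Setting $v := 1 - u$ and using $W'(1) = 0$ together with the mean value theorem, $W'(u) = -W''(\xi)\,v$ for some $\xi$ between $u$ and $1$. The hypothesis $u \geq \alpha$ on $B_{3r}(x)\cap\Omega$ combined with $u \leq 1+c_2\ep$ from Lemma \ref{L:u-C0-bound} forces $\xi \in [\alpha, 1+c_2\ep]$, so $W''(\xi) \geq \kappa$ by the structural assumption on $W$. Substituting into \eqref{E:1a} yields
\[
\ep^2 \Delta v \;=\; a(y)\,v + \ep\lambda \quad \text{in } B_{3r}(x)\cap\Omega, \qquad a(y) := W''(\xi(y)) \geq \kappa,
\]
together with $\partial v/\partial\nu = 0$ on $\partial\Omega \cap B_{3r}(x)$ from \eqref{E:1b} and the a-priori bound $|v| \leq 1-\alpha+c_2\ep$ on $B_{3r}(x)\cap\overline{\Omega}$.

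The core of the argument is constructing, for each $y_0 \in B_r(x)\cap\overline{\Omega}$, a barrier $\Phi$ on $B_{3r}(x)\cap\overline{\Omega}$ enjoying: (a) $\ep^2\Delta\Phi - \kappa\Phi \leq \ep\lambda$ in the interior; (b) $\Phi \geq v$ on the spherical piece $\partial B_{3r}(x)\cap\overline{\Omega}$; (c) $\partial\Phi/\partial\nu \geq 0$ on $\partial\Omega \cap B_{3r}(x)$; and (d) $\Phi(y_0) \leq c_4\ep^\eta$. My candidate is
\[
\Phi(y) \;:=\; \frac{\Lambda_0\,\ep}{\kappa} \;+\; 2\exp\!\left(\mu\,\frac{\rho(y)-3r}{\ep}\right) \;+\; \frac{C_5\,\ep}{r}\,\chi_a\!\bigl(\dist_g(y,\partial\Omega)\bigr),
\]
where $\rho$ is a smoothing of $\dist_g(\cdot,x)$ (e.g.\ $\rho := \sqrt{\dist_g(\cdot,x)^2 + \ep^2}$) to sidestep the $C^2$-singularity at $x$, and $\mu \in (0,\sqrt\kappa)$, $C_5$, $a$ are parameters depending only on $n, k_0, \kappa_0, W$. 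The middle exponential supplies (d) and satisfies (a) by the Laplacian comparison \eqref{E:Laplacian-comparison}; the last term---modeled on Case 2 of the proof of Lemma \ref{L:u-C0-bound}---enforces (c) via \eqref{E:dist-estimates} and the cutoff properties of Section \ref{SS:cutoff}; the first term is a constant shift absorbing the inhomogeneity $\ep\lambda$; and (b) holds because the middle term at $\rho \approx 3r$ is of order $2$, dominating $1-\alpha+c_2\ep$. With $\Phi$ in hand, $w := v-\Phi$ satisfies $\ep^2\Delta w \geq \kappa w$ interiorly, $w \leq 0$ on the spherical boundary, and $\partial w/\partial\nu \leq 0$ on the Neumann portion. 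The strong maximum principle with Hopf's lemma (valid because the zeroth-order coefficient $-\kappa$ is non-positive) excludes a strictly positive maximum of $w$ on $\partial\Omega \cap B_{3r}(x)$, forcing $w \leq 0$ throughout $B_{3r}(x)\cap\overline{\Omega}$. Evaluating at $y_0$, where $\rho(y_0) \leq r + \ep$ and $r \geq \ep^{1-\eta}$, the middle term of $\Phi(y_0)$ is at most $2\exp\!\bigl(-2\mu\,\ep^{-\eta}(1+o(1))\bigr)$, which is smaller than $\ep^\eta$ for $\ep < \ep_4(\eta,\dots)$, while the other two terms are $\mathcal{O}(\ep)$ and $\mathcal{O}(\ep/r) = \mathcal{O}(\ep^\eta)$; this gives $v(y_0) \leq c_4\ep^\eta$ as desired.

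The main obstacle is the simultaneous verification of (a) and (c) in the Riemannian, possibly non-convex boundary setting. The exponential piece is engineered for pointwise decay and satisfies (a) once $\mu$ is chosen small enough to beat the curvature error in $\Delta\rho$, but does not automatically control the outward normal derivative on $\partial\Omega$; the Neumann-correction $\chi_a\circ d$ fixes (c) but introduces an $\mathcal{O}(C_5\ep/a^2)$ contribution to $\ep^2\Delta\Phi$ via $\chi_a''$ that must be absorbed into $\kappa\Phi$. Balancing $\mu$, $C_5$, and $a$, and checking that the smoothing $\rho$ does not spoil the supersolution inequality near $x$, is the crux of the proof, amounting to an exponentially sharpened version of the barrier argument already deployed in Case 2 of Lemma \ref{L:u-C0-bound}.
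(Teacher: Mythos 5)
Your strategy---linearize around $u=1$, set $v := 1-u$, and compare to an explicit exponential supersolution---is a genuinely different route from the paper's, which runs a direct contradiction argument using a radial barrier $\zeta$ that interpolates between heights $O(1)$ apart over an annulus of width $\sim r$ (so $|\nabla\zeta| \lesssim 1/r$), together with a Neumann correction of normal derivative $\sim 4/r$, and never linearizes the equation. Your barrier would, if it closed, give the sharper $u\geq 1-C\ep$ essentially for free (the paper's remark after the lemma acknowledges this is attainable). However, there is a genuine gap in the verification of (c), and it is structural, not merely a deferred computation.

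Writing $r_0 := \dist_g(\cdot,x)$ and $\rho := \sqrt{r_0^2+\ep^2}$, one computes
\[
\frac{\partial \Phi_2}{\partial \nu} \;=\; \Phi_2\cdot\frac{\mu}{\ep}\cdot\frac{r_0}{\rho}\,\bigl(\nabla r_0\cdot\nu\bigr).
\]
Near the corner $\partial B_{3r}(x)\cap\partial\Omega$ one has $\Phi_2\approx 2$, $r_0/\rho\approx 1$, and (for $x\in\partial\Omega$) the estimate \eqref{E:allard-estimate} gives $|\nabla r_0\cdot\nu|\leq\frac{\kappa_0}{2}r_0 \sim \kappa_0 r$, so $\bigl|\partial\Phi_2/\partial\nu\bigr| \sim \mu\kappa_0 r/\ep$, and on a concave stretch of $\partial\Omega$ this is a large \emph{negative} contribution. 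Meanwhile $\nabla d=-\nu$ on $\partial\Omega$, so $\partial(\chi_a\circ d)/\partial\nu = -\chi_a'(0) = -1$; with your $+$ sign this gives $\partial\Phi_3/\partial\nu = -C_5\ep/r<0$, which \emph{worsens} (c) rather than enforcing it---the correction must be $-\frac{C_5\ep}{r}\chi_a\circ d$. Even after fixing the sign, enforcing $\partial\Phi/\partial\nu\geq 0$ forces $\frac{C_5\ep}{r}\gtrsim\frac{\mu\kappa_0 r}{\ep}$, i.e.\ $C_5\gtrsim\mu\kappa_0(r/\ep)^2\geq\mu\kappa_0\,\ep^{-2\eta}$, which is incompatible with $C_5$ depending only on $n,k_0,\kappa_0,W$. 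The root cause: the exponential piece has gradient $\sim 1/\ep$ near $\partial B_{3r}(x)$, whereas the Neumann correction you propose has normal derivative only $\sim\ep/r$; the paper's linear $\zeta$ works precisely because its gradient is $\sim 1/r$, which a correction of the same order $4/r$ can dominate uniformly. The ``balancing of $\mu$, $C_5$, $a$'' you defer as the crux is exactly where the argument cannot close.
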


\begin{proof}
We only prove (i) as the arguments are the same for (ii). Let $x \in \overline{\Omega} \cap \tilde{U}$ and $r<\frac{1}{6} \min\{i_0, \frac{1}{\sqrt{k_0}}, \frac{1}{\kappa_0}, \dist_g(x,\partial U)\}$. Suppose $u \geq \alpha$ on $B_{3r}(x) \cap \Omega$. By Lemma \ref{L:u-C0-bound}, we have $u \leq 1+c_2 \ep$ provided that $\ep < \ep_2$. It remains to prove that $u \geq 1- c_4 \ep^\eta$ on $B_r(x) \cap \Omega$ for some suitable choice of $c_4$, range of $r$ and sufficiently small $\ep$. 

We will argue by contradiction as in the proof of Lemma \ref{L:u-C0-bound}. Suppose, on the contrary, that $\alpha \leq \inf_{B_r(x) \cap \Omega} u < 1-c_4 \ep^\eta$. We fix a smooth radial function $\zeta$ such that 
\begin{itemize}
\item $\zeta = 1-\frac{c_4 \kappa \ep^\eta}{4C}$ inside $B_r(x)$;
\item $\zeta = \alpha +\frac{c_4 \kappa \ep^\eta}{4C}$ on the annulus $B_{3r} (x) \setminus B_{2r} (x)$; and
\item inside the ball $B_{3 r}(x)$, we have by \eqref{E:Laplacian-comparison}
\[ \alpha +\frac{c_4 \kappa \ep^\eta}{4C} \leq \zeta \leq 1-\frac{c_4 \kappa \ep^\eta}{4C}, \quad |\nabla \zeta| \leq 2r^{-1}, \quad |\Delta \zeta| \leq C(n) r^{-2}\]
\end{itemize}
where $C=C(W):=\max\{1,\max_{s \in [\alpha,2]} |W''(s)|\}$, $c_4$ will be chosen later and then $\ep$ is sufficiently small so that $c_4 \ep^\eta < \min\{1,\frac{2C(1-\alpha)}{\kappa}\}$. 

We now consider the auxiliary function
\[ f:=u -\zeta + \frac{4}{r} \chi_a \circ d \]
where $d(\cdot)=\dist_g(\cdot, \partial \Omega)$ and $\chi$ is the cutoff function defined in section \ref{SS:cutoff} with $a=\frac{c_4 \kappa r \ep^\eta }{16 C}$. One can verify (note that $C \geq \kappa$)
\begin{itemize}
\item $f \geq -\frac{3}{4} \frac{c_4 \kappa \ep^\eta}{C} \geq -\frac{3}{4} c_4 \ep^\eta$ along the boundary $\partial B_{3r}(x) \cap \Omega$;
\item $\inf_{B_{3r}(x) \cap \Omega} f < -\frac{3}{4} c_4 \ep^\eta$; and 
\item $\frac{\partial f}{\partial \nu}\geq \frac{2}{r}>0$ along $B_{3r}(x) \cap \partial \Omega$, if non-empty.
\end{itemize}
Hence, no matter whether $x \in \Omega$ or $x \in \partial \Omega$, $\inf_{B_{3r}(x) \cap \Omega} f$ is achieved by an interior minima $x_0 \in B_{3r}(x) \cap \Omega$. Applying the maximum principle at $x_0$, by our choice of $a$ and a similar calculation as before (note that we still have $\alpha \leq u(x_0) < 2$ and $\alpha \leq \zeta(x_0) \leq 1$) gives
\[ \frac{3 c_4 \kappa}{4} \ep^{-1+\eta} \leq -\lambda- \ep \Delta \zeta  -\frac{4 \ep}{r} \Delta( \chi_a \circ d).\]
By the definition of $\chi_a$ in section \ref{SS:cutoff} and \eqref{E:dist-estimates}, this implies
\[ \frac{3 \kappa}{4} c_4 \leq \Lambda_0 \ep^{1-\eta} +C(n) \ep^{2-\eta} r^{-2} + 4(n-1)C_2 \ep^{2-\eta} r^{-1} + \frac{32C}{c_4 \kappa} \ep^{2-2\eta} r^{-2}.  \] 
For $r \geq \ep^{1-\eta}$, this yields a contradiction when $c_4$ is sufficiently large, depending only on $\Lambda_0$, $W$, $n$, $i_0$, $k_0$, $\kappa_0$ and $\dist_g(\tilde{U},\partial U)$.
\end{proof}

\begin{remark}
In case $B_{3r}(x) \Subset \Omega$, we do not need to include the term involving $\chi_a$ in the auxiliary function $f$ and we have the stronger conclusion that $u \geq 1-c_4 \ep$ (i.e. with $\eta=1$), c.f. \cite[Proposition 4.1]{Hutchinson2000}. With a different choice of auxiliary function $\zeta$, we can also establish the same estimates with $\eta=1$.
However, unlike in \cite{Hutchinson2000} (see the remark after Proposition 3.2), apparently one cannot obtain better estimates even when $\lambda=0$ by iterating the arguments above.
\end{remark}

\subsection{Estimates for the discrepancy function}
\label{SS:discrepancy}

Note that in order for the almost-monotonicity formula in Lemma \ref{L:interior-mono-1} and \ref{L:boundary-mono-1} to be effective, we need some control on the right hand side of the inequalities involving the discrepancy function $\xi_\ep$ defined in \eqref{E:discrepancy}. 

We give in this subsection a crude pointwise upper bound for the discrepancy function. We want to point out that our bound is weaker than the interior bound obtained in \cite[Proposition 3.3]{Hutchinson2000} but holds up to the boundary $\partial \Omega$. Note that we have chosen to work at the original scale, rather than rescaling by $\ep$ as in \cite{Hutchinson2000}. This does not affect our subsequent analysis, but certain expressions will change by appropriate multiples of $\ep$.

\begin{proposition}[Pointwise upper bound on discrepancy function]
\label{P:xi-bound}
There exists a constant
\[ \ep_5=\ep_5(\Lambda_0, C_0, W, n, i_0, k_0, \kappa_0, \dist_g(\tilde{U},\partial U))>0,\]
such that for all $0 < \ep < \ep_5$, we have 
\begin{equation}
\label{E:xi-bound}
\sup_{\overline{\Omega} \cap \tilde{U}} \xi_{\ep} (u) \leq  \ep^{-4/5}.
\end{equation}
\end{proposition}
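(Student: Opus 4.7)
The plan is to derive a differential inequality for the discrepancy $\xi_\ep(u)$ and then apply a maximum-principle argument with an auxiliary distance-to-boundary correction that compensates for the possible non-convexity of $\partial\Omega$.

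First, I would combine the Bochner identity \eqref{E:Bochner} with the Allen-Cahn equation \eqref{E:1a} to compute $\Delta \xi_\ep(u)$. The key algebraic observation is that when one uses the Cauchy-Schwarz bound $\ep|\Hess u|^2 \geq \ep|\Hess u(\nabla u)|^2/|\nabla u|^2$ together with the identity $\ep\,\Hess u(\nabla u) = \nabla \xi_\ep(u) + (W'(u)/\ep)\nabla u$, the term $-(W'(u))^2/\ep^3$ arising from $-(W'(u)/\ep)\Delta u$ cancels exactly, yielding
\[
\Delta \xi_\ep(u) - b\cdot\nabla \xi_\ep(u) \geq \frac{|\nabla \xi_\ep(u)|^2}{\ep|\nabla u|^2} - (n-1)k_0\,\ep|\nabla u|^2 + \frac{\lambda\,W'(u)}{\ep^2},
\]
with $b := 2W'(u)\,\nabla u/(\ep^2|\nabla u|^2)$. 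Using Lemmas \ref{L:u-C0-bound}, \ref{L:apriori-u-bound} and the boundedness of $W'$, the right-hand side is bounded from below by $-C\ep^{-2}$ for some constant $C$.

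Next, I would record the Neumann derivative of $\xi_\ep(u)$ on $\partial\Omega$. Differentiating \eqref{E:1b} tangentially and using the identity $\Hess u(X,\nu) = -II_{\partial\Omega}(\nabla u,X)$ for any $X\in T\partial\Omega$, one finds
\[
\frac{\partial \xi_\ep(u)}{\partial\nu} = -\ep\,II_{\partial\Omega}(\nabla u,\nabla u),
\]
which need not be non-positive without convexity of $\partial\Omega$. To compensate, I would introduce an auxiliary barrier of the form $\Phi := \xi_\ep(u) - K\,\chi_a\!\circ d$, where $d=\dist_g(\cdot,\partial\Omega)$, $\chi_a$ is the cutoff from Section~\ref{SS:cutoff}, and $K=K(\ep)$ and $a=a(\ep)$ are chosen so that $\partial\Phi/\partial\nu \leq 0$ on $\partial\Omega$. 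Since $\ep|\nabla u|^2 \leq c_3^2/\ep$ and $|II_{\partial\Omega}|\leq \kappa_0$, the choice $K \sim \ep^{-1}$ suffices to dominate the boundary term.

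With the barrier in place, I would argue by contradiction, assuming $\sup_{\overline{\Omega}\cap\tilde U} \xi_\ep(u) > \ep^{-4/5}$. By Hopf's lemma and the sign of $\partial\Phi/\partial\nu$, the supremum of $\Phi$ cannot be attained on $\partial\Omega\cap\tilde U$; while a standard cutoff argument near $\partial\tilde U$ (using the $C^0$-estimate \eqref{E:u-C0-bound} to bound $\xi_\ep$ there by a constant times $\ep^{-1}$) handles the outer boundary. At any remaining interior maximum point $x_0\in \Omega\cap\tilde U$, one has $\nabla \xi_\ep(u)(x_0) = K \nabla(\chi_a\!\circ d)(x_0)$ and $\Delta \xi_\ep(u)(x_0) \leq K\Delta(\chi_a\!\circ d)(x_0)$. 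Substituting into the differential inequality of the first step and using $|\nabla(\chi_a\!\circ d)|\leq 1$ and $|\Delta(\chi_a\!\circ d)|\leq C$ coming from \eqref{E:dist-estimates} together with the properties of $\chi_a$ from Section \ref{SS:cutoff}, I would read off an upper bound for $\xi_\ep(u)(x_0)$ that contradicts $\xi_\ep(u)(x_0) > \ep^{-4/5}$.

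The hardest step will be the final balancing: the barrier introduces a boundary-derivative correction of order $K\sim \ep^{-1}$, and one must tune $K$ and $a$ so that the resulting interior inequality yields precisely $\ep^{-4/5}$ rather than the a priori $\ep^{-1}$. The specific exponent $-4/5$ should emerge from optimizing the trade-off between the size of $K$ needed to dominate $\ep\,II(\nabla u,\nabla u)$ on $\partial\Omega$ and the contributions from $\Delta(\chi_a\!\circ d)$ and $(n-1)k_0\,\ep|\nabla u|^2$ in the interior differential inequality.
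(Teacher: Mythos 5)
Your reduction to a differential inequality for $\xi_\ep(u)$, the formula $\partial_\nu \xi_\ep(u) = -\ep\,II_{\partial\Omega}(\nabla u,\nabla u)$ on the boundary, and the use of a $\chi_a\circ d$ barrier of size $K\sim\ep^{-1}$ to push the maximum away from $\partial\Omega$ all match the structure of the paper's proof (modulo a sign: you want $\Phi = \xi_\ep + K\,\chi_a\circ d$, not $-$, so that $\partial_\nu\Phi = -\ep\, II(\nabla u,\nabla u) - K <0$ forces the maximum off $\partial\Omega$ via Hopf). The serious problem is in the interior step: as you have set it up, the maximum-principle argument cannot close, and the exponent $-4/5$ cannot ``emerge from optimizing,'' because the right-hand side of your differential inequality contains no positive term that grows with $\xi_\ep$ itself. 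At an interior maximum $x_0$ of $\Phi$, the good term $|\nabla\xi_\ep|^2/(\ep|\nabla u|^2)$ evaluates to $|K\nabla(\chi_a\circ d)|^2/(\ep|\nabla u|^2)$, which is bounded and does not coerce $\xi_\ep(x_0)$. Concretely: the left side is at most $O(K/a) + K|b| = O(\ep^{-21/10})$ (using $|\nabla u|\gtrsim\ep^{-9/10}$ from the assumed lower bound $\xi_\ep(x_0)>\ep^{-4/5}$), while the right side is bounded below only by $-C\ep^{-2}$, so the inequality is trivially satisfied and no contradiction is reached.

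What makes the paper's argument work is the Hutchinson--Tonegawa auxiliary function $\xi_G := \xi_\ep - \ep^{-1}G(u)$ with $G = G_\delta$ built so that $G'\gtrsim\delta^2$, $-G''>0$, and so on. Carrying the extra $-\ep^{-1}G(u)$ through the Bochner computation manufactures a strictly positive term $\ep^{-1}(G')^2 + \ep^{-1}G'W' - 2\ep^{-1}G''(W+G) \gtrsim \ep^{-1}\delta^4$ on the right-hand side, independent of $\nabla\xi_G$. At the maximum of $f := \zeta\,\xi_G + \ep^{-1}R^{-1}\hat C\,\chi_a\circ d$, the left-hand side is bounded by $C\ep^{3/5}R^{-2} + C\ep^{-1/5}R^{-1} + C\ep$ (the cutoff $\zeta$ being essential to these small exponents), so balancing gives $\delta^4 \lesssim\ep^{4/5}$. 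Taking $\delta\sim\ep^{1/5}$ produces the contradiction, and since $\xi_\ep = \xi_G + \ep^{-1}G(u)$ with $G\leq 5\delta$, one finally obtains $\xi_\ep \leq \tfrac12\ep^{-4/5} + 5\ep^{-1}\delta \leq \ep^{-4/5}$. Without some analogue of this $G$-correction (and the accompanying free parameter $\delta$), your setup yields no bound better than the trivial $O(\ep^{-1})$ that follows from the gradient estimate alone.
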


\begin{proof}
We consider as in \cite{Hutchinson2000} the following auxiliary function
\begin{equation}
\label{E:xi_G}
\xi_{G}:=\frac{\ep}{2} |\nabla u|^2 -\ep^{-1} W(u)- \ep^{-1}G(u)
\end{equation}
where $G\in C^\infty(\mathbb{R})$ is to be chosen later. By Bochner's formula \eqref{E:Bochner} and a similar computation as in \cite[Lemma 3.5]{Hutchinson2000}, we have the following differential inequality at points where $|\nabla u|>0$:
\begin{equation}
\label{E:xi-inequality}
\begin{aligned}
\ep^2  & \Delta \xi_{G} - \frac{2(W'+G') \nabla u}{|\nabla u|^2}  \cdot \nabla \xi_{G}   +2(G''+(n-1)k_0 \ep^2) \xi_{G} \\
 \geq &  \ep^{-1} (G')^2 + \ep^{-1} G'W' -2 \ep^{-1} (G''+(n-1)k_0 \ep^2) (W+G) +\lambda (W'+G')
\end{aligned}
\end{equation}
where the functions $G$ and $W$ and their derivatives are evaluated at $u$. 

Let $\delta \in (0,1)$ to be fixed later and define (recall Lemma \ref{L:u-C0-bound})
\[ G=G_\delta(r):=\delta \left( 1+ \int_{-1-c_2 \ep_2}^r \exp \left(-\int_{-1-c_2 \ep_2}^t \frac{|W'(s)|+\delta}{2(W(s)+\delta)}\; ds \right) \; dt \right). \]
Noting that $\frac{W}{W'}$ is a smooth function which vanishes at $\pm 1$, one easily verifies that the following holds on $r \in [-1-c_2 \ep_2,1+c_2 \ep_2]$,
\begin{itemize}
\item $\delta \leq G \leq 5 \delta$,
\item $C^{-1} \delta^2< G' \leq \delta$,
\item $0 < -G'' =G' \frac{|W'(r)|+\delta}{2(W(r)+\delta)} \leq C$
\end{itemize}
for some constant $C=C(W)>1$. By Lemma \ref{L:u-C0-bound}, we have (the functions $G$ and $W$ and their derivatives are evaluated at $u$)
\begin{equation}
\label{E:G'-estimate}
G' W'-2G''(W+G) = G'\left( W'+\frac{|W'|+\delta}{W+\delta} (W+G) \right) \geq \delta G'
\end{equation}
provided that $\ep <\ep_2$. 

Let $x \in \overline{\Omega} \cap \tilde{U}$ and $R=\frac{1}{6} \min\{i_0, \frac{1}{\sqrt{k_0}}, \frac{1}{\kappa_0}, \dist_g(x,\partial U),1\}$. We fix a smooth radial cutoff function $\zeta$ centered at $x$ such that
\begin{itemize}
\item $\zeta = 1$ on $B_{R/8}(x)$
\item $\zeta =0$ outside $B_{R/4}(x)$,
\item $0 \leq \zeta \leq 1$, $|\nabla \zeta| \leq 8 R^{-1} \zeta^{1/2}$, $|\Delta \zeta| \leq C(n) R^{-2}$
\end{itemize}
where $C(n)>0$ is a constant depending only on $n$. Let $d(\cdot):=\dist_g(\cdot, \partial \Omega)$ and $\chi$ be the cutoff function defined in section \ref{SS:cutoff} with $a=(8\hat{C})^{-1} \ep^{1/5} R$ where $\hat{C}>0$ is a constant to be fixed later. Define a new auxiliary function
\[ f:=\zeta \xi_{G} + \ep^{-1} R^{-1} \hat{C} \chi_a \circ d .\]
We claim that for sufficiently small $\ep$, we have
\begin{equation}
\label{E:rescaled-xi-bound-cutoff}
\sup_{\Omega} f \leq \frac{1}{2} \ep^{-4/5}.
\end{equation}
If the supremum is non-positive, then we must have $\xi_G \leq 0$ and by our choice of $G$ that
\begin{equation}
\xi_\ep = \xi_{G}+\ep^{-1} G(u) \leq 5 \ep^{-1} \delta
\end{equation}
and thus $\xi_\ep(x) \leq \ep^{-4/5}$ provided that $\delta \leq \frac{1}{5} \ep^{1/5}$. So, without loss of generality, we assume that $\sup_{\Omega} f >0$ and is achieved at some point $x_0 \in B_{R/4}(x) \cap \overline{\Omega}$ (note that $f \leq \frac{1}{4} \ep^{-4/5}$ outside $B_{R/4}(x)$ by our choice of $a$). First, we claim that $x_0 \notin \partial \Omega$. By the Neumann boundary condition \eqref{E:1b} and \cite[Lemma 3.1]{Tonegawa2003}, along $\partial \Omega$ we have
\[ \frac{\partial  f}{\partial \nu} = \frac{\partial \zeta}{\partial \nu} \xi_G - \ep \zeta B(\nabla u, \nabla  u) - \ep^{-1} R^{-1} \hat{C}  \]
where $B$ is the second fundamental form of $\partial \Omega$ with respect to $\nu$ (i.e. $B(\tau,\tau):=\langle \nabla_\tau \nu, \tau \rangle$). Since $\kappa_0$ bounds the maximum absolute value of the principal curvatures of $\partial \Omega$, by Lemma \ref{L:apriori-u-bound} and our choice of $\zeta$, we have $\frac{\partial  f}{\partial \nu} <0$ along $\partial \Omega$ by taking $\hat{C}:=10 c_3^2$. Hence, we can assume without loss of generality that $x_0 \notin \partial  \Omega$. Suppose \eqref{E:rescaled-xi-bound-cutoff} does not hold, then at an interior maximum $x_0$ we have by our choice of $a$ that
\begin{equation}
\label{E:rescaled-xi-bound-cutoff-fail}
\frac{1}{2}\ep \zeta |\nabla u|^2 \geq \zeta  \xi_{G} >  \frac{1}{2} \ep^{-4/5} -  2a \hat{C} \epsilon^{-1} R^{-1}  = \frac{1}{4} \ep^{-4/5}.
\end{equation}
From this and \eqref{E:apriori-du-bound}, we have at $x_0$,
\begin{equation}
\label{E:xi-zeta-bound}
\xi_G \leq \frac{c_3^2}{2} \ep^{-1} \quad \text{ and } \quad \zeta^{-1} \leq 2 c_3^2 \ep^{-1/5}.
\end{equation}
Since $x_0$ is an interior maximum point of $f$, we have $\nabla f =0$ and $\Delta f \leq 0$ at $x_0$. The first order condition says $\zeta \nabla \xi_G = - \xi_G \nabla \zeta - \ep^{-1} R^{-1} \hat{C} \chi_a'  \nabla d$, which together with \eqref{E:xi-zeta-bound} implies 
\[ \zeta^{1/2} |\nabla \xi_G| \leq 4c_3^2 \ep^{-1} R^{-1} +  \sqrt{2} \hat{C} c_3 \ep^{-11/10} R^{-1} \leq C(c_3) \ep^{-11/10} R^{-1}.\]
On the other hand, the second order condition at $x_0$ says
\[ \zeta \Delta \xi_G \leq -\xi_G \Delta \zeta -2 \nabla \zeta \cdot \nabla \xi_G - \ep^{-1} R^{-1} \hat{C} \chi_a' \Delta d - \ep^{-1} R^{-1} \hat{C} \chi_a'' |\nabla d|^2.\]
Using the first order condition, our choice of $\zeta$, together with \eqref{E:dist-estimates} and \eqref{E:xi-zeta-bound}, we have
\[ \begin{aligned} 
\Delta \xi_G & \leq  \Big( - \xi_G \zeta^{-1} \Delta \zeta +2 \xi_G \zeta^{-2} |\nabla \zeta|^2  \Big) + 2 \zeta^{-2} \ep^{-1} R^{-1}\hat{C} \chi_a' (\nabla d \cdot \nabla \zeta) \\
& \hspace{2cm}  - \ep^{-1} R^{-1} \hat{C} \zeta^{-1} \chi_a' \Delta d - \ep^{-1} R^{-1} \hat{C} \zeta^{-1} \chi_a'' |\nabla d|^2 \\
& \leq C(n,c_3) \ep^{-6/5} R^{-2} + C(c_3) \ep^{-13/10} R^{-2} \\
& \hspace{2cm} + C(C_1,c_3,n) \ep^{-6/5} R^{-1} + C(c_3) \ep^{-7/5} R^{-2} \\
& \leq C(C_1,c_3,n) \ep^{-7/5} R^{-2}. \\
\end{aligned} \]

Combining the estimates above, for sufficiently small $\ep$ depending only on $C_1$, $n$, $\hat{C}$ and $c_3$, we have at $x_0$
\begin{equation}
\textrm{(L.H.S.) of \eqref{E:xi-inequality}} \leq  C(C_1,c_3,n) \ep^{3/5}R^{-2} + C(W,c_3) \ep^{-1/5} R^{-1}+ C(n,k_0,c_3) \ep  
\end{equation}
On the other hand, by our choice of $G$, together with \eqref{E:G'-estimate}
\begin{equation}
\begin{aligned}
\textrm{(R.H.S.) of \eqref{E:xi-inequality}} & \geq  \ep^{-1} (G')^2 + \ep^{-1} \delta G' -C(n,k_0,W) \ep - C(W) \Lambda_0  \\
&\geq \ep^{-1}(C^{-2} \delta^4 + C^{-1} \delta^3) -C(n,k_0,W) \ep - C(W) \Lambda_0 \\
&\geq 2 C^{-2} \ep^{-1} \delta^4 -C(n,k_0,W) \ep - C(W) \Lambda_0.
\end{aligned}
\end{equation}
Combining these we arrive at the inequality
\[ \delta^4 \leq C'' \ep^{4/5}\]
for some constant $C''>0$ depending only on $\Lambda_0$, $C_0$, $W$, $n$, $i_0$, $k_0$, $\kappa_0$ and $\dist_g(\tilde{U},\partial U)$. Therefore, we arrive at a contradiction by taking $\delta= ( 2C'')^{1/4} \ep^{1/5} <1$ for sufficiently small $\ep$. Therefore, \eqref{E:rescaled-xi-bound-cutoff} holds and Proposition \ref{P:xi-bound} follows.
\end{proof}

\begin{remark}
    With a different choice of the auxiliary function $G$, resembling the one in \cite{Kagaya2019}, it is possible to obtain a better estimate of order $\ep^{-3/5}$ in Proposition \ref{P:xi-bound}.
\end{remark}

\subsection{Consequences of the almost monotonicity formula}
\label{SS:consequences}

We now combine the results obtained in the previous subsections to derive some consequences of the almost monotonicity formula (Lemma \ref{L:interior-mono-1} and \ref{L:boundary-mono-1}). We first establish a uniform lower bound of the energy density ratio \eqref{E:energy-ratio} on balls of radius $\ep \leq r \leq \mathcal{O} (\ep^{4/5})$ centered at a boundary point within the transition region $\{|u| \leq 1-b\}$.

\begin{lemma}[Energy density ratio lower bound I]
\label{L:density-lower-bound-1}
For any $b \in (0,1-\alpha)$, there exist constants
\[ \ep_6=\ep_6(b,\Lambda_0, C_0, W, n, i_0, k_0, \kappa_0, \dist_g(\tilde{U},\partial U))>0,\]
\[ c_6=c_6(b, \Lambda_0, C_0, W, n, i_0, k_0, \kappa_0, \dist_g(\tilde{U},\partial U)) > 0\]
such that for all $0<\ep <\ep_4$, $x \in \partial \Omega \cap \tilde{U}$ with $|u(x)| \leq 1-b$, and $\ep \leq r \leq c_6 \ep^{4/5}$, we have
\[ I_{\ep,u}(r,x) \geq  c_6. \]
In fact, the same holds for $x \in \Omega \cap \tilde{U}$ provided furthermore that $r<\dist_g(x,\partial \Omega)$. 
\end{lemma}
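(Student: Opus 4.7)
The plan is a two-scale argument: first produce a universal lower bound on the energy density at the microscopic scale $r_0 \sim \ep$ using only the gradient estimate, and then propagate it up to the macroscopic scale $r \sim \ep^{4/5}$ via the almost-monotonicity formula.

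For the seed estimate, set $r_0 := \tfrac{b}{2c_3}\,\ep$, where $c_3$ is the gradient constant from Lemma \ref{L:apriori-u-bound}. The bound $|\ep \nabla u| \leq c_3$ on $\overline{\Omega} \cap \tilde{U}$ implies, for every $y \in B_{r_0}(x) \cap \overline{\Omega}$, the estimate $|u(y) - u(x)| \leq c_3 r_0/\ep = b/2$, using a path inside $\overline{\Omega}$ of length comparable to $\dist_g(x,y)$ (available for small $r_0$ by the smoothness and bounded ball curvatures of $\partial\Omega$). Combined with $|u(x)| \leq 1-b$, this forces $|u| \leq 1 - b/2$ on $B_{r_0}(x) \cap \overline{\Omega}$, so $W(u) \geq w_0(b) := \inf_{|s| \leq 1 - b/2} W(s) > 0$ there. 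Since $|\lambda u| \leq \Lambda_0 C_0$ gives $\widetilde{W}(u) \geq W(u)$, combining with the volume comparison \eqref{E:half-ball-comparison} (for $x \in \partial\Omega$) or \eqref{E:ball-comparison} (for the interior case, using $r_0 < r < \dist_g(x,\partial\Omega)$) yields
\[
\widetilde{I}_{\ep,u}(r_0, x) \;\geq\; \frac{w_0\,|B_{r_0}(x) \cap \Omega|}{\omega_{n-1}\,\ep\,r_0^{n-1}} \;\geq\; c(b) \;>\; 0,
\]
with $c(b)>0$ depending only on the admissible parameters.

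To propagate to scale $r$, I invoke Proposition \ref{P:xi-bound} to obtain $\xi_\ep(u) \leq \ep^{-4/5}$ pointwise, hence $\widetilde{\xi}_\ep(u) \leq \ep^{-4/5} + 2\Lambda_0 C_0 \leq 2\ep^{-4/5}$ for $\ep$ small. On any ball $B_\rho(x) \cap \Omega$ of small radius the volume comparisons bound $\rho^{-n} \int_{B_\rho(x) \cap \Omega} -\widetilde{\xi}_\ep \geq -C\ep^{-4/5}$. Integrating the almost-monotonicity formula of Lemma \ref{L:boundary-mono-1} (when $x \in \partial\Omega$) or Lemma \ref{L:interior-mono-1} (when $x \in \Omega$, in which case $B_r(x) \subset \Omega$ by the extra hypothesis) from $r_0$ to $r$ then produces
\[
e^{c_1 r}\,\widetilde{I}_{\ep,u}(r,x) \;\geq\; e^{c_1 r_0}\,\widetilde{I}_{\ep,u}(r_0,x) \;-\; C\ep^{-4/5}(r - r_0).
\]
Since $r \leq c_6 \ep^{4/5}$, the loss is at most $C c_6$, and choosing $c_6$ small enough relative to $c(b)$ preserves at least half of the seed estimate: $\widetilde{I}_{\ep,u}(r,x) \geq c(b)/2$.

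Finally, the passage from $\widetilde{I}$ back to $I$ is essentially free: the identity $\widetilde{W}(u) - W(u) = \ep(\Lambda_0 C_0 - \lambda u)$ gives $|\widetilde{e}_\ep - e_\ep| \leq 2\Lambda_0 C_0$, so
\[
|\widetilde{I}_{\ep,u}(r,x) - I_{\ep,u}(r,x)| \;\leq\; \frac{2\Lambda_0 C_0 \,|B_r(x) \cap \Omega|}{\omega_{n-1}\,r^{n-1}} \;\leq\; Cr \;\leq\; C c_6 \ep^{4/5},
\]
which is negligible for small $\ep$. Setting $c_6$ to be any sufficiently small constant (at most $c(b)/4$) yields $I_{\ep,u}(r,x) \geq c_6$ as required. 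The central difficulty, and the reason the threshold $r \leq c_6 \ep^{4/5}$ appears, is precisely the weakness of the pointwise bound $\xi_\ep \leq \ep^{-4/5}$ compared with the scale-invariant quantity $\rho^{-n}\int -\widetilde{\xi}_\ep$ driving the monotonicity; this limited-range propagation is the chief new feature relative to the Hutchinson--Tonegawa interior theory, where a stronger pointwise bound permits propagation up to $O(1)$ scales.
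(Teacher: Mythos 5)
Your proposal is correct and follows essentially the same route as the paper: a seed estimate at scale $\mathcal{O}(\ep)$ via the gradient bound of Lemma \ref{L:apriori-u-bound} and the half-ball volume comparison, then propagation to scale $\mathcal{O}(\ep^{4/5})$ by integrating Lemma \ref{L:boundary-mono-1} (or \ref{L:interior-mono-1}) against the pointwise discrepancy bound of Proposition \ref{P:xi-bound}, finishing with the $\widetilde{I} \leftrightarrow I$ comparison \eqref{E:I-tilde-estimate}. The only cosmetic differences are that the paper seeds $\widetilde{I}_{\ep,u}(\ep,x)$ directly (evaluating the integral over the smaller ball $B_{b\ep/(2c_3)}(x)$) rather than at $r_0 = b\ep/(2c_3)$, and that the additive $+2\Lambda_0 C_0$ in your bound for $\widetilde{\xi}_\ep$ is superfluous since $\widetilde{\xi}_\ep \leq \xi_\ep$ pointwise.
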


\begin{proof}
We first assume $x \in \partial \Omega$. Since $|u(x)| \leq 1-b$, we have $|u| \leq 1-\frac{b}{2}$ on $B_{\frac{b \ep}{2c_3}}(x) \cap \Omega$ by \eqref{E:apriori-du-bound} provided that $\ep<\ep_2$. As a result, by \eqref{E:half-ball-comparison} and \eqref{E:energy-ratio} we have for sufficiently small $\ep$ that
\begin{equation}
\label{E:density-lower-1}
\widetilde{I}_{\ep,u}(\ep,x) \geq I_{\ep,u}(\ep,x) \geq \frac{1}{\omega_{n-1}\ep^{n-1}} \int_{B_{\frac{b \ep}{2c_3}}(p) \cap \Omega} \frac{W(u)}{\ep}  \geq 4  c'_6
\end{equation}
where $c'_6:= \frac{1}{12} \frac{\omega_n}{\omega_{n-1}} \left( \frac{b}{2c_3} \right)^n  \min_{|s| \leq 1-\frac{b}{2}} W(s)>0$.

Let $c_6'':= \frac{\omega_{n-1}}{\omega_n} c_6'$. Assuming $\ep < \ep_5$ is sufficiently small such that $c_6'' \ep^{4/5} < \frac{1}{2} \min\{\dist(\tilde{U},\partial U),i_0, \frac{1}{\kappa_0}\}$, we can apply the boundary almost monotonicity formula \eqref{E:boundary-mono-1} and the upper bound on discrepancy function \eqref{E:xi-bound} to see that for any $\ep \leq r \leq c_6'' \ep^{4/5}$ (note that $\widetilde{\xi}_\ep (u) \leq \xi_\ep(u)$ by definition)
\begin{equation}
\begin{aligned}
\widetilde{I}_{\ep,u}(r,x) & \geq e^{-c_1 (r-\ep)} \widetilde{I}_{\ep,u}(\ep,x) - \frac{2}{3} \frac{\omega_n}{\omega_{n-1}}  \ep^{-4/5} (r-\ep) \\
& \geq e^{-c_1c_6' \ep^{4/5}} \widetilde{I}_{\ep,u}(\ep,x) - \frac{2}{3} \frac{\omega_n}{\omega_{n-1}}  c_6'' > 2 c_6'
\end{aligned}
\end{equation}
for $\ep$ sufficiently small such that \eqref{E:density-lower-1} holds and $e^{-c_1c_6' \ep^{4/5}} \geq \frac{3}{4}$. Setting $c_6:=\min\{c_6',c_6''\}$, the asserted lower bound for $I_{\ep,u}(r,x)$ follows from the estimate
\begin{equation}
\label{E:I-tilde-estimate}
\widetilde{I}_{\ep,u}(r,x) \leq I_{\ep,u}(r,x) + \frac{4}{3} \frac{\omega_n}{\omega_{n-1}} \Lambda_0 C_0 r 
\end{equation}
which holds for all sufficiently small $r$ depending only on $k_0$ and $\kappa_0$. The case for interior points $x \in \Omega$ can be obtained similarly using the interior monotonicity formula \eqref{E:interior-mono-1}.
\end{proof}

In order to prove a uniform lower bound of the energy density ratio on balls of radius $r \geq \mathcal{O}(\ep^{4/5})$, the pointwise upper bound in Proposition \ref{P:xi-bound} will not be sufficient for our purpose. We need a more refined integral upper bound for the discrepancy function. It is important to note that the center $x$ in the proposition below may or may not lie in $\partial \Omega$, and does not need to be inside the transition region $\{|u| \leq 1-b\}$.

\begin{proposition}[Integral upper bound for discrepancy function]
\label{P:xi-integral-bound}
For any $b \in (0,1-\alpha)$, there exist constants
\[ r_2=r_2(n, i_0, k_0, \kappa_0, \dist_g(\tilde{U},\partial U)) \in (0,1),\]
\[ \ep_7=\ep_7(b,\Lambda_0, C_0, W, n, i_0, k_0, \kappa_0, \dist_g(\tilde{U},\partial U))>0,\]
\[ c_7=c_7(b,\Lambda_0, C_0, W, n, i_0, k_0, \kappa_0, \dist_g(\tilde{U},\partial U)) > 1\]
such that for any $0<\ep <\ep_7$, $x \in \overline{\Omega} \cap \tilde{U}$ and $c_6 \ep^{4/5} \leq r \leq  r_2$, we have
\begin{equation}
\label{E:discrepancy-integral-bound}
\frac{1}{r^n} \int_{B_r(x) \cap \Omega} \xi_{\ep}^+ (u) \leq \frac{c_7}{r^{7/8}} (I_{\ep,u}(r,x)+1).
\end{equation}
\end{proposition}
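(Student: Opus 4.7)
The plan is to combine the almost-monotonicity inequality from Lemma \ref{L:boundary-mono-1} (or its interior counterpart Lemma \ref{L:interior-mono-1}, depending on whether $x\in\partial\Omega$ or $x\in\Omega$) with the pointwise upper bound $\xi_\ep \leq \ep^{-4/5}$ from Proposition \ref{P:xi-bound}. Since $\widetilde{\xi}_\ep = \xi_\ep + \lambda u - \Lambda_0 C_0$, one has the decomposition $-\widetilde{\xi}_\ep = -\xi_\ep^+ + \xi_\ep^- + (\Lambda_0 C_0 - \lambda u)$, in which both $\xi_\ep^-$ and $\Lambda_0 C_0 - \lambda u$ are nonnegative. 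Substituting this into the monotonicity inequality and dropping those two nonnegative pieces from the right-hand side recasts the monotonicity as
\[
\frac{d}{d\rho}\left[e^{c_1\rho}\rho^{-(n-1)}\int_{B_\rho(x)\cap\Omega}\widetilde{e}_\ep(u)\right] + C\,\rho^{-n}\int_{B_\rho(x)\cap\Omega}\xi_\ep^+(u)\ \geq\ 0,
\]
valid in the $\rho$-range of Lemma \ref{L:boundary-mono-1}. The task thus reduces to extracting an $L^1$-control of $\xi_\ep^+$ from the deficit that prevents $F(\rho):= e^{c_1\rho}\rho^{-(n-1)}\int_{B_\rho\cap\Omega}\widetilde{e}_\ep(u)$ from being genuinely monotone.

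Next I would integrate the above differential inequality against a suitable weight $\rho^{\alpha}$ over an interval $[\rho_0,r]$ with $\rho_0 \in (c_6\ep^{4/5},r)$ to be chosen, and use Fubini to rewrite
\[
\int_{\rho_0}^{r}\rho^{\alpha-n}\int_{B_\rho(x)\cap\Omega}\xi_\ep^+(u)\,d\rho = \int_{B_r(x)\cap\Omega}\xi_\ep^+(u)\,K(|y-x|)\,dy
\]
for an explicit singular kernel $K$. On the inner region $B_{\rho_0}(x)\cap\Omega$ (where $K$ is large) I would apply the pointwise estimate $\xi_\ep^+\leq\ep^{-4/5}$, while on the annular region $(B_r\setminus B_{\rho_0})\cap\Omega$ (where $K$ is tame) I would use $\xi_\ep^+\leq e_\ep(u)$ combined with the crude bound $\int_{B_r\cap\Omega}e_\ep(u) = \omega_{n-1} r^{n-1} I_{\ep,u}(r,x)$. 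Choosing $\rho_0$ as an appropriate power of $r$ to balance the two contributions produces the $r^{-7/8}(I_{\ep,u}+1)$ bound, with the $+1$ term absorbing the lower-order errors generated by the Pohozaev correction $\Lambda_0 C_0-\lambda u$ and the auxiliary use of Lemma \ref{L:local-boundary-energy}.

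The main obstacle is arriving at the precise exponent $7/8$. In the interior Hutchinson-Tonegawa setting the pointwise bound $\xi_\ep\leq C\ep$ is so strong that merely integrating it gives a far better estimate than claimed here, but our weaker bound $\xi_\ep\leq \ep^{-4/5}$ actually \emph{blows up} as $\ep\to 0$; in fact, at the smallest admissible radius $r\sim c_6\ep^{4/5}$ the pointwise bound alone yields only $\ep^{-4/5}\sim r^{-1}$ on the right-hand side, which is strictly weaker than $r^{-7/8}$. The improvement from $r^{-1}$ to $r^{-7/8}$ comes from the averaging effect of the monotonicity inequality combined with the Fubini rearrangement sketched above, and optimizing over $\rho_0$. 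A secondary technical difficulty, when $x\in\partial\Omega$, is controlling the $\partial\Omega$-boundary contributions to Pohozaev's identity via \eqref{E:allard-estimate} and Lemma \ref{L:boundary-estimate}, but this is handled in the same way as in the proof of Lemma \ref{L:boundary-mono-1}.
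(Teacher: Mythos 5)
Your proposal has a fundamental sign problem that makes the whole strategy fail. The almost-monotonicity inequality (Lemma \ref{L:boundary-mono-1} or \ref{L:interior-mono-1}) after your decomposition reads
\[
\frac{d}{d\rho}\left[e^{c_1\rho}\rho^{-(n-1)}\int_{B_\rho(x)\cap\Omega}\widetilde{e}_\ep(u)\right] \geq -\,C\,\rho^{-n}\int_{B_\rho(x)\cap\Omega}\xi_\ep^+(u),
\]
equivalently $F'(\rho) + C\,\rho^{-n}\int_{B_\rho}\xi_\ep^+ \geq 0$. This says the \emph{drop} of $F$ is controlled \emph{from above} by $\int\xi_\ep^+$; it provides no upper bound on $\int\xi_\ep^+$ whatsoever. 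Integrating against any nonnegative weight and applying Fubini can only produce a \emph{lower} bound on the weighted integral of $\xi_\ep^+$ in terms of $F(r)-F(\rho_0)$, which is useless since $F$ can increase. To extract an upper bound on $\int\xi_\ep^+$ from a monotonicity statement you would need the reverse inequality $F' \leq -c\,\rho^{-n}\int\xi_\ep^+ + (\text{error})$, but this is not what the Pohozaev identity delivers after the one-sided discards used to derive \eqref{E:Pohozaev-2} and Lemma \ref{L:boundary-mono-1}.

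The paper's proof does not use the monotonicity formula to bound $\int\xi_\ep^+$ at all. Instead it splits $B_r(x)\cap\Omega$ into three pieces $\mathcal{A}, \mathcal{B}, \mathcal{C}$ and estimates each by a different mechanism: on the thin collar $\mathcal{A}$ near $\partial B_r(x)$ and near $\partial\Omega$, a volume estimate $dv_g(\mathcal{A}) \lesssim r^{n-1}\ep^{9/10}$ combined with the pointwise bound $\xi_\ep^+ \leq \ep^{-4/5}$; on the set $\mathcal{B}$ of points whose $\ep^{9/10}$-balls meet the transition region $\{|u|\leq 1-b\}$, a Vitali covering plus Lemma \ref{L:density-lower-bound-1} to bound the number of such balls (and hence $dv_g(\mathcal{B})$) in terms of $I_{\ep,u}(r,x)$; and on the set $\mathcal{C}$ of points well away from the transition, a Caccioppoli-type estimate using the Bochner formula and the convexity $W''(u)\geq\kappa$ on $\{|u|>1-b\}$ to control $\tilde\ep|\nabla u|^2$ (and hence $\xi_\ep^+$) there. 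The exponent $7/8$ emerges from $\ep^{1/10}r^{-1}\leq c_6^{-1/8}r^{-7/8}$ on $\mathcal{A}$ and $\mathcal{B}$ using $r\geq c_6\ep^{4/5}$, and $r^{-3/4}\leq r^{-7/8}$ on $\mathcal{C}$. None of this resembles your weighted-Fubini plan, and the key technical idea — the Caccioppoli estimate on $\mathcal{C}$ exploiting $W''\geq\kappa$ — is entirely absent from your proposal.
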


\begin{proof}
Let $b \in (0,1-\alpha)$. Fix $x \in \overline{\Omega} \cap \tilde{U}$ and $c_6 \ep^{4/5} \leq r \leq  r_2$ where $r_2$ is sufficiently small with $r_2 < \frac{1}{2} \{r_0,r_1,\dist_g(x,\partial U)\}$. We divide the domain of integration $B_r(x) \cap \Omega$ into a union of three disjoint subsets as follow:
\[ \mathcal{A}:=\Big((B_r(x) \cap \Omega) \setminus B_{r-2\ep^{9/10}}(x) \Big) \cup \Big((B_r(x) \cap \Omega)\cap U_{2\ep^{9/10}}(\partial \Omega)\Big), \]
\[ \mathcal{B}:=\left\{ y \in (B_r(x) \cap \Omega) \setminus \mathcal{A} \; : \; B_{\ep^{9/10}}(y) \cap \left\{ |u| \leq 1-b \right\}  \neq \emptyset \right\}, \]
\[ \mathcal{C}:=\left\{ y \in (B_r(x) \cap \Omega) \setminus \mathcal{A} \; : \; B_{\ep^{9/10}}(y) \cap \left\{ |u| \leq 1-b \right\}  = \emptyset \right\} .\]
Note that $r \geq c_6 \ep^{4/5} \geq  2\ep^{9/10}$ for all sufficiently small $\ep$ (depending on $c_6$). We will estimate the (L.H.S.) of \eqref{E:discrepancy-integral-bound} over each of the three subsets defined above.

We begin with the estimate on $\mathcal{A}$. By \eqref{E:sphere-comparison} and the bound on ball curvatures, for $r_2$ small enough (depending only on $n$, $k_0$ and $\kappa_0$) we have the following volume estimate
\[ dv_g(\mathcal{A}) \leq C(n) r^{n-1} \ep^{9/10}.\]
Combining this estimate with \eqref{E:xi-bound} and $r \geq c_6 \ep^{4/5}$, we have
\[ \frac{1}{r^n} \int_{\mathcal{A}} \xi_\ep^+(u)  \leq C(n)  \ep^{1/10} r^{-1} \leq C(n) c_6^{-1/8} r^{-7/8} \]
which is clearly bounded by the (R.H.S.) of \eqref{E:discrepancy-integral-bound} provided that $c_7 \geq C(n) c_6^{-1/8}$.

We now proceed with the estimate on $\mathcal{B}$. We first need to an estimate of the volume $dv_g(\mathcal{B})$ by considering the covering of $\mathcal{B}$ by balls $\{B_{\ep^{9/10}}(z)\}$ with centers $z \in \left\{ |u| \leq 1-b \right\} \cap (\Omega \cap B_{r-\ep^{9/10}}(x)\setminus \overline{U_{\ep^{9/10}}(\partial \Omega)})$. By Vitali covering lemma, we can extract a finite disjoint family of such balls $\{B_{\ep^{9/10}}(z_i)\}_{i=1}^N$ so that $\mathcal{B} \subset \cup_{i=1}^N B_{5\ep^{9/10}}(z_i)$. Hence, we have from \eqref{E:ball-comparison} that the volume estimate
\begin{equation}
\label{E:B-vol-estimate}
dv_g (\mathcal{B}) \leq N \cdot C(n) \ep^{9n/10}
\end{equation}
holds for all sufficiently small $\ep$ depending on $r_0$. It remains to bound $N$ from above. Applying Lemma \ref{L:density-lower-bound-1} (with $x=z_i$), since $\ep \leq \ep^{9/10} \leq c_6 \ep^{4/5}$ and $\dist(z_i,\partial \Omega) > \ep^{9/10}$, we have $I_{\ep,u}(\ep^{9/10},z_i) \geq c_6$. Summing over all $i$, noting that $\{B_{\ep^{9/10}}(z_i)\}_{i=1}^N$ are disjoint balls contained in $B_r(x) \cap \Omega$, we have 
\[ N \omega_{n-1} c_6 \ep^{9(n-1)/10} \leq \int_{B_r(x) \cap \Omega} e_\ep(u)=\omega_{n-1} r^{n-1} I_{\ep,u}(r,x).\]
This gives an upper estimate on $N$. Putting it back to \eqref{E:B-vol-estimate}, we obtain the volume estimate
\[ dv_g(\mathcal{B}) \leq  C(n,c_6) \ep^{9/10} r^{n-1} I_{\ep,u}(r,x).  \]
Using this estimate together with \eqref{E:xi-bound} and $r \geq c_6 \ep^{4/5}$, we have
\[ \frac{1}{r^n} \int_{\mathcal{B}} \xi_\ep^+(u)  \leq  C(n,c_6) \ep^{1/10} r^{-1} I_{\ep,u}(r,x) \leq C(n,c_6) r^{-7/8} I_{\ep,u}(r,x) \]
which is clearly bounded by the (R.H.S.) of \eqref{E:discrepancy-integral-bound} with $c_7$ sufficiently large depending only on $n$ and $c_6$.

Finally, it remains to prove the estimate on $\mathcal{C}$. Consider the Lipschitz function
\[ \phi(y):=\min \left\{1, \ep^{-9/10} \dist_g \left(y, (\tilde{U}\setminus (B_r(x) \cap \Omega))  \cup \left\{|u| \leq 1-b \right\} \right) \right\} \]
which clearly satisfies
\begin{itemize}
\item $\phi=0$ on $(\tilde{U} \setminus(B_r(x) \cap \Omega)) \cup \left\{|u| \leq  1-b \right\}$,
\item $\phi=1$ on $\mathcal{C}$,
\item $0 \leq \phi \leq 1$ and $|\nabla \phi| \leq \ep^{-9/10}$ everywhere.
\end{itemize}
Taking gradient on both sides of \eqref{E:1a} and then multiplying by $\phi^2 \nabla u$, we have by Bochner formula \eqref{E:Bochner} that
\[ \ep \phi^2 \left( \frac{1}{2} \Delta |\nabla u|^2 - |\Hess u|^2 +(n-1)k_0 |\nabla u|^2\right)  \geq \ep \phi^2 \langle \nabla \Delta u, \nabla u \rangle = \frac{W''(u)}{\ep} \phi^2 |\nabla u|^2. \]
Integrating the above on $B_r(x) \cap \Omega$ and integrating by part (note that $\phi=0$ on $B_r(x) \cap \partial \Omega$), we obtain
\[ \begin{aligned}
& \int_{B_r(x) \cap \Omega} \left( \ep \phi^2 |\Hess u|^2 +\frac{W''(u)}{\ep} \phi^2 |\nabla u|^2 \right) \\
 &\leq   - \ep \int_{B_r(x) \cap \Omega}  \phi \langle \nabla \phi, \nabla |\nabla u|^2 \rangle + C(n,k_0) \ep \int_{B_r(x) \cap \Omega} \phi^2 |\nabla u|^2  \\
&=  - 2\ep \int_{B_r(x) \cap \Omega}   \phi \Hess u (\nabla u, \nabla \phi) + C(n,k_0) \ep \int_{B_r(x) \cap \Omega} \phi^2 |\nabla u|^2  \\
&\leq  \frac{1}{2} \int_{B_r(x) \cap \Omega}  \ep \phi^2 |\Hess u|^2 +2 \int_{B_r(x) \cap \Omega}  \ep |\nabla u|^2 |\nabla \phi|^2  + C(n,k_0) \ep \int_{B_r(x) \cap \Omega} \phi^2 |\nabla u|^2
\end{aligned}
\]
where we have used Cauchy-Schwarz and Young's inequalities in the last inequality. Since $\spt \phi \subset \left\{|u| > 1-b \right\}$, we have $W''(u) \geq \kappa$ on $\spt \phi$. Absorbing the first term on right to the left hand side and combining this with $|\nabla \phi| \leq \ep^{-9/10}$, we obtain for $\ep$ sufficiently small (depending on $n$, $k_0$ and $\kappa$)
\[ \frac{\kappa}{2\ep}  \int_{B_r(x) \cap \Omega}  \phi^2 |\nabla u|^2 \leq 2 \ep^{-4/5}  \int_{B_r(x) \cap \Omega} |\nabla u|^2 .   \]
Since $\phi=1$ on $\mathcal{C}$ and $0 \leq \phi \leq 1$, together with $r \geq c_6 \ep^{4/5}$, we have
\[ \frac{1}{r^n} \int_{\mathcal{C}} \xi_\ep^+(u)  \leq \frac{4\omega_{n-1}}{\kappa} \ep^{1/5} r^{-1} I_{\ep,u}(r,x) \leq  C(n,W,c_6)  r^{-3/4} I_{\ep,u}(r,x)  \]
which is clearly bounded by the (R.H.S.) of \eqref{E:discrepancy-integral-bound} with $c_7$ sufficiently large depending only on $n$, $W$ and $c_6$ (note that $r^{-3/4} \leq r^{-7/8}$). 
\end{proof}

Using Proposition \ref{P:xi-integral-bound}, we can extend Lemma \ref{L:density-lower-bound-1} beyond the scale of $\mathcal{O}(\ep^{4/5})$.

\begin{proposition}[Energy density ratio lower bound II]
\label{P:density-lower-bound-3}
For any $b \in (0,1-\alpha)$, there exist constants
\[ r_3=r_3(b,\Lambda_0, C_0, W, n, i_0, k_0, \kappa_0, \dist_g(\tilde{U},\partial U)) \in (0,1),\]
\[ \ep_8=\ep_8(b,\Lambda_0, C_0, W, n, i_0, k_0, \kappa_0, \dist_g(\tilde{U},\partial U))>0,\]
\[ c_8=c_8(b,\Lambda_0, C_0, W, n, i_0, k_0, \kappa_0, \dist_g(\tilde{U},\partial U)) > 0\]
such that for all $0<\ep <\ep_8$, $x \in \partial \Omega \cap \tilde{U}$ with $|u(x)| \leq 1-b$, and $\ep \leq r \leq r_3$, we have
\[ I_{\ep,u}(r,x) \geq  c_8. \]
In fact, the same holds for $x \in \Omega \cap \tilde{U}$ provided furthermore that $r<\dist_g(x,\partial \Omega)$. 

\end{proposition}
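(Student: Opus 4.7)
The plan is to start from the scale $r_0 := c_6 \ep^{4/5}$, where Lemma \ref{L:density-lower-bound-1} gives $I_{\ep,u}(r_0, x) \geq c_6$, and propagate this lower bound to all $r \in [r_0, r_3]$ by means of a Gronwall-type argument that combines the almost-monotonicity formula with Proposition \ref{P:xi-integral-bound}. In the remaining range $\ep \leq r \leq r_0$ the conclusion is immediate from Lemma \ref{L:density-lower-bound-1}, so only $r \in [r_0, r_3]$ needs attention.

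Since $\widetilde{W}(t) - W(t) = \ep(\Lambda_0 C_0 - \lambda t) \in [0, 2\ep\Lambda_0 C_0]$, one has $\widetilde{I}_{\ep,u}(\rho, x) \geq I_{\ep,u}(\rho, x)$ and $\widetilde{I}_{\ep,u}(\rho, x) - I_{\ep,u}(\rho, x) \leq C\rho$ for small $\rho$; in particular $\widetilde{I}_{\ep,u}(r_0, x) \geq c_6$. Setting $\Phi(\rho) := e^{c_1 \rho}\, \widetilde{I}_{\ep,u}(\rho, x)$, the boundary almost-monotonicity formula \eqref{E:boundary-mono-1} (or the interior one \eqref{E:interior-mono-1} when $x \in \Omega$) gives
\[
\Phi'(\rho) \geq -\frac{e^{c_1 \rho}}{(1 + 3\kappa_0 \rho)\, \omega_{n-1}\, \rho^n} \int_{B_\rho(x) \cap \Omega} \widetilde{\xi}_\ep^+(u).
\]
Using the pointwise bound $\widetilde{\xi}_\ep^+ \leq \xi_\ep^+ + 2\Lambda_0 C_0$, the volume estimate $\textrm{Vol}(B_\rho(x) \cap \Omega) \leq C\rho^n$, Proposition \ref{P:xi-integral-bound}, and $I_{\ep,u}(\rho, x) \leq \widetilde{I}_{\ep,u}(\rho, x) = e^{-c_1\rho}\Phi(\rho)$, this turns into the linear differential inequality
\[
\Phi'(\rho) + C \rho^{-7/8}\, \Phi(\rho) \geq -C(\rho^{-7/8} + 1), \qquad \rho \in [r_0, r_3],
\]
where $C$ depends only on the parameters listed in the statement.

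Multiplying by the integrating factor $\mu(\rho) := \exp(8C \rho^{1/8})$, which satisfies $\mu'(\rho) = C \rho^{-7/8} \mu(\rho)$, and integrating from $r_0$ to $r$ yields
\[
\mu(r)\, \Phi(r) \geq \mu(r_0)\, \Phi(r_0) - C \int_{r_0}^r \mu(\rho)(\rho^{-7/8} + 1)\, d\rho.
\]
For $r_3$ small enough that $\mu(r_3) \leq e$, the remaining integral is bounded by $C r_3^{1/8}$, which leads to $\Phi(r) \geq e^{-1}\Phi(r_0) - C r_3^{1/8} \geq c_6/e - C r_3^{1/8}$. Shrinking $r_3$ further so that $C r_3^{1/8} \leq c_6/(2e)$ gives $\Phi(r) \geq c_6/(2e)$, hence $\widetilde{I}_{\ep,u}(r, x) \geq c_6 e^{-c_1 r_3}/(2e)$. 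Combined with $I_{\ep,u}(r, x) \geq \widetilde{I}_{\ep,u}(r, x) - Cr$ and a final adjustment of $r_3$, this produces the desired $I_{\ep,u}(r, x) \geq c_8$ for a suitable $c_8 > 0$.

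The main subtlety the Gronwall step is designed to resolve is the circular dependence built into Proposition \ref{P:xi-integral-bound}: the bound on the discrepancy integral at scale $\rho$ is linear in $I_{\ep,u}(\rho, x)$ itself, so the error term in the monotonicity inequality cannot be estimated by a fixed constant. A naive bound of the form $I_{\ep,u}(\rho, x) \lesssim \rho^{-(n-1)}$ coming from the total energy would yield a non-integrable contribution when multiplied by $\rho^{-7/8}$ for $n \geq 2$, so treating $\widetilde{I}_{\ep,u}(\rho, x)$ as the unknown in a linear ODE and applying an integrating factor is essential. The case $x \in \Omega \cap \tilde{U}$ with $r < \dist_g(x, \partial\Omega)$ is handled identically, with Lemma \ref{L:interior-mono-1} replacing Lemma \ref{L:boundary-mono-1} and the factors involving $c_1$ and $\kappa_0$ dropping out.
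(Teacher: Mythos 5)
Your proof is correct, and it tracks the paper's strategy closely in the reduction: handle $\ep \leq r \leq c_6\ep^{4/5}$ by Lemma~\ref{L:density-lower-bound-1}, then combine the boundary almost-monotonicity inequality \eqref{E:boundary-mono-1} with Proposition~\ref{P:xi-integral-bound} to obtain the ODE inequality \eqref{E:mono-boundary}, i.e.\ $\Phi'(\rho) \geq -C\rho^{-7/8}(\Phi(\rho)+1)$ for $\Phi(\rho)=e^{c_1\rho}\widetilde{I}_{\ep,u}(\rho,x)$. Where you genuinely depart from the paper is in the final ODE step. The paper runs a continuation (bootstrap) argument: it \emph{defines} $r_3 = r_3(\ep,x)$ as the first scale at which $\Phi$ drops to half its initial value, notes that on $[c_6\ep^{4/5},r_3]$ the a~priori lower bound $\Phi \geq \tfrac12 c_6 e^{c_1 c_6\ep^{4/5}}$ lets one absorb the constant and reduce to a logarithmic inequality $\tfrac{d}{d\rho}\log\Phi \geq -C\rho^{-7/8}$, and concludes by integration that this $r_3$ has an $\ep$- and $x$-independent positive lower bound. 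You instead fix $r_3$ in advance, multiply the linear inequality by the integrating factor $\mu(\rho)=\exp(8C\rho^{1/8})$, and integrate directly; no pointwise lower bound on $\Phi$ is needed along the way. Both arguments exploit exactly the same integrability of $\rho^{-7/8}$ and yield comparable constants. Your route is somewhat more economical in that it avoids the bootstrap and the continuity/endpoint dichotomy, whereas the paper's version makes the role of $r_3$ as a "critical dropping scale" more visible. One minor inefficiency in your write-up: you invoke $\widetilde{\xi}_\ep^+ \leq \xi_\ep^+ + 2\Lambda_0 C_0$, but since $\widetilde W \geq W$ one has $\widetilde{\xi}_\ep \leq \xi_\ep$ pointwise and hence $\widetilde{\xi}_\ep^+ \leq \xi_\ep^+$ outright (this is what the paper uses); your looser bound does no harm but is unnecessary. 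Your final remark about why a crude energy bound $I_{\ep,u}(\rho,x)\lesssim \rho^{-(n-1)}$ fails and Gronwall is essential is accurate and reflects the actual obstruction, since the uniform density-ratio upper bound of Proposition~\ref{P:density-upper-bound} is only established after this result and cannot be used here.
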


\begin{proof}
By Lemma \ref{L:density-lower-bound-1}, the proposition holds for $\ep \leq r \leq c_6 \ep^{4/5}$ with $c_8=c_6$ and $\ep_8=\ep_6$. The point is to use the almost monotonicity formula in Lemma \ref{L:interior-mono-1} and \ref{L:boundary-mono-1} together with Proposition \ref{P:xi-integral-bound} to show that $I_{\ep,u}(r,x)$ cannot further decrease too much as $r$ increases beyond the scale $c_6 \ep^{4/5}$ to some definite radius $r_3$ to be determined. 

We first deal with the case $x \in \partial \Omega$. By the boundary almost monotonicity formula Lemma \ref{L:boundary-mono-1} together with \eqref{E:discrepancy-integral-bound}, for $\ep <\ep_7$ we obtain for $c_6 \ep^{4/5} \leq \rho \leq  \frac{1}{2} \min\{r_2, i_0,\frac{1}{\kappa_0},\dist_g(x,\partial \tilde{U})\}$, 
\begin{equation}
\label{E:mono-boundary}
\frac{d}{d\rho}\left(  \omega_{n-1} e^{c_1 \rho} \widetilde{I}_{\ep,u}(\rho,x) \right) \geq - \frac{c_7}{\rho^{7/8}} e^{c_1 \rho} ( \widetilde{I}_{\ep,u}(\rho,x)+1).
\end{equation}
We define $r_3$ as the supremum of all $r' \in \left[c_6 \ep^{4/5},\frac{1}{2} \min\{r_2, i_0,\frac{1}{\kappa_0},\dist_g(x,\partial \tilde{U})\} \right]$ such that
\[ e^{c_1 \rho}  \widetilde{I}_{\ep,u}(\rho,x)  \geq \frac{1}{2} c_6 e^{c_1 c_6 \ep^{4/5}} \text{ for all $\rho \in [c_6 \ep^{4/5},r']$}. \]
Note that the left hand side is a continuous function of $\rho$. By Lemma \ref{L:density-lower-bound-1}, we have $r_3>c_6 \ep^{4/5}$. If $r_3=\frac{1}{2} \min\{r_2, i_0,\frac{1}{\kappa_0},\dist_g(x,\partial \tilde{U})\}$, then we are done by taking $\ep_8=\min\{\ep_6,\ep_7\}$ and $c_8=\frac{1}{4}  c_6 e^{-c_1 r_2}$ (we further reduce $r_3$ depending on $k_0$, $\kappa_0$, $C_0$ and $\Lambda_0$ according to the estimate \eqref{E:I-tilde-estimate}). Otherwise, by continuity we must have
\begin{equation}
\label{E:r_0-achieve}
e^{c_1 r_3}  \widetilde{I}_{\ep,u}(r_3,x)  = \frac{1}{2} c_6 e^{c_1 c_6 \ep^{4/5}}.
\end{equation}
By the definition of $r_3$ and \eqref{E:mono-boundary}, we have for $\rho \in [c_6 \ep^{4/5},r_3]$ 
\[ \frac{d}{d\rho} \log \left( e^{c_1 \rho} \widetilde{I}_{\ep,u}(\rho,x)  \right) \geq  -C(n,c_6,c_7,r_2) \rho^{-7/8}. \]
Integrating $\rho$ over $[c_6 \ep^{4/5},r_3]$, using \eqref{E:r_0-achieve} and Lemma \ref{L:density-lower-bound-1},
\[ \log \frac{1}{2} \geq \log \left( \frac{e^{c_1 r_3} \widetilde{I}_{\ep,u}(r_3,x) }{e^{c_1 c_6 \ep^{4/5}} \widetilde{I}_{\ep,u}(c_6 \ep^{4/5},x) } \right) \geq -8C(n,c_6,c_7,r_2) r_3^{1/8}. \]
This gives the desired positive lower bound for $r_3$ independent of $\ep$ and $x$. The proof for the interior case $x \in \Omega$ is similar where we use Lemma \ref{L:interior-mono-1} instead.
\end{proof} 

We now proceed to establish an upper bound for the energy density ratio in terms of the uniform energy bound $E_0$, within the same range of radii as in Proposition \ref{P:density-lower-bound-3}. Note that we do not require the center point $x$ to lie inside the transition region $\{|u| \leq 1-b\}$.

\begin{proposition}[Energy density ratio upper bound]
\label{P:density-upper-bound}
For any $b \in (0,1-\alpha)$, there exist constants
\[ \ep_9=\ep_9(b,\Lambda_0, C_0, W, n, i_0, k_0, \kappa_0, \dist_g(\tilde{U},\partial U))>0,\]
\[ c_9=c_9(b,\Lambda_0, C_0, W, n, i_0, k_0, \kappa_0, \dist_g(\tilde{U},\partial U)) > 1\]
such that for all $0<\ep <\ep_9$, $x \in \partial \Omega \cap \tilde{U}$, and $\ep \leq r \leq r_3$, we have
\[ I_{\ep,u}(r,x) \leq  c_9(E_0+1). \]
In fact, the same holds for $x \in \Omega \cap \tilde{U}$ provided furthermore that $r<\dist_g(x,\partial \Omega)$. 
\end{proposition}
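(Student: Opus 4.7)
The plan is to propagate the trivial upper bound at the largest admissible scale $r_3$ down to smaller scales using the almost monotonicity formulae (Lemmas \ref{L:interior-mono-1} and \ref{L:boundary-mono-1}) backwards. Both lemmas can be rewritten as a differential inequality of the form
\[
F'(\rho) \geq -\frac{C}{\rho^n}\int_{B_\rho(x) \cap \Omega} \widetilde{\xi}_\ep^+(u)
\]
for $F(\rho) := e^{c_1\rho} \omega_{n-1} \widetilde{I}_{\ep,u}(\rho, x)$ (with a suitable analogous constant in the interior case), controlling how fast $F$ can decrease as $\rho$ shrinks. At the largest scale, since $E_\ep(u) \leq E_0$, the definitions directly give $\widetilde{I}_{\ep,u}(r_3, x) \leq C(r_3)(E_0 + 1)$, the $+1$ absorbing the additive constant $\ep \Lambda_0 C_0$ in $\widetilde{W}$. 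The rest of the range is treated in two sub-ranges.

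In the intermediate range $c_6 \ep^{4/5} \leq \rho \leq r_3$, I use the integral discrepancy bound from Proposition \ref{P:xi-integral-bound}: together with $\widetilde{\xi}_\ep^+ \leq \xi_\ep^+ + 2\Lambda_0 C_0$ and the volume comparison \eqref{E:ball-comparison}, this gives
\[
\frac{1}{\rho^n}\int_{B_\rho(x) \cap \Omega} \widetilde{\xi}_\ep^+(u) \leq \frac{C}{\rho^{7/8}}\bigl(I_{\ep,u}(\rho,x) + 1\bigr).
\]
Setting $H(\rho) := F(\rho) + 1$ and integrating $F'$ over $[r, r_3]$, together with the comparison \eqref{E:I-tilde-estimate}, yields
\[
H(r) \leq H(r_3) + C \int_r^{r_3} \rho^{-7/8}\, H(\rho)\, d\rho,
\]
and since $\int_0^{r_3} \rho^{-7/8}\, d\rho = 8 r_3^{1/8}$ is finite, the backward Gronwall inequality delivers $H(r) \leq H(r_3)\exp(8C r_3^{1/8}) \leq C(E_0 + 1)$ for all $r$ in this range.

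In the smallest range $\ep \leq \rho < c_6 \ep^{4/5}$, Proposition \ref{P:xi-integral-bound} is not available, so I fall back on the pointwise bound $\xi_\ep(u) \leq \ep^{-4/5}$ from Proposition \ref{P:xi-bound}, which directly yields
\[
\frac{1}{\rho^n}\int_{B_\rho(x) \cap \Omega} \widetilde{\xi}_\ep^+(u) \leq C\ep^{-4/5} + C.
\]
Integrating $F'$ from $r$ up to $c_6 \ep^{4/5}$ and using the intermediate-range bound at the endpoint $\rho = c_6 \ep^{4/5}$ gives
\[
F(r) \leq F(c_6\ep^{4/5}) + C\ep^{-4/5}\bigl(c_6\ep^{4/5} - r\bigr) \leq C(E_0 + 1),
\]
and one final application of \eqref{E:I-tilde-estimate} converts this to the desired bound on $I_{\ep,u}(r,x)$.

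The main technical subtlety is the Gronwall step: the integral discrepancy bound depends linearly on $I_{\ep,u}(\rho, x)$ itself, creating a self-referential estimate; Gronwall closes the loop precisely because the power $\rho^{-7/8}$ is integrable at the origin, making the exponent $\int_0^{r_3} \rho^{-7/8}\, d\rho$ bounded uniformly in $\ep$ and $r$. The boundary case $x \in \partial\Omega \cap \tilde{U}$ uses Lemma \ref{L:boundary-mono-1}; the interior case $x \in \Omega \cap \tilde{U}$ with $r < \dist_g(x,\partial\Omega)$ is handled identically by replacing the boundary almost-monotonicity formula with Lemma \ref{L:interior-mono-1}, noting that the radii and constants involved are of the same order.
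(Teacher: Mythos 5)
Your proof is correct and follows essentially the same three-stage architecture as the paper: a trivial bound at the top scale $r_3$ from the global energy bound, propagation down through $[c_6\ep^{4/5}, r_3]$ using the integral discrepancy estimate (Proposition \ref{P:xi-integral-bound}) fed into the almost-monotonicity inequality, and propagation through $[\ep, c_6\ep^{4/5}]$ using the cruder pointwise bound (Proposition \ref{P:xi-bound}). The one place where your write-up genuinely differs is in how the self-referential intermediate-range estimate is closed: the paper first reduces without loss of generality to the regime $e^{c_1\rho}\widetilde{I}_{\ep,u}(\rho,x)\geq\delta_0\geq 1$ (which requires a small continuity/stopping-time argument to make fully rigorous), so that the inhomogeneous term $\widetilde{I}_{\ep,u}+1$ becomes comparable to $\widetilde{I}_{\ep,u}$ and one can integrate the logarithmic derivative directly; you instead set $H=F+1$ and invoke the backward Gr\"onwall inequality on $H$, which avoids the case split entirely. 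The two are equivalent — both work precisely because $\rho^{-7/8}$ is integrable near $0$ — and your Gr\"onwall formulation is arguably the cleaner bookkeeping. The small-scale range and the conversion between $I_{\ep,u}$ and $\widetilde I_{\ep,u}$ via \eqref{E:I-tilde-estimate} are handled the same way as in the paper.
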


\begin{proof}
We only give the proof for $x \in \partial \Omega$ as the interior case is similar. We first treat the case when $c_6 \ep^{4/5} \leq r \leq r_3$. Let $\delta_0:=\max\{1,e^{c_1 r_3} \widetilde{I}_{\ep,u}(r_3,x)\}>0$. Note that
\begin{equation}
\label{E:delta0}
\delta_0 \leq C(c_1,n) r_3^{-(n-1)} E_0 + C(c_1,n) \Lambda_0 C_0 \leq C(c_1,n,r_3,\Lambda_0,C_0) (E_0+1).  
\end{equation}
Note that $e^{c_1 r} \widetilde{I}_{\ep,u}(r,x) \leq \delta_0$ clearly implies the desired upper bound of $I_{\ep,u}(r,x)$. We can assume without loss of generality that $e^{c_1 \rho} \widetilde{I}_{\ep,u}(\rho,x) \geq \delta_0$ for all $\rho \in [r,r_3]$. Integrating \eqref{E:mono-boundary} over $[r,r_3]$, we obtain as in Proposition \ref{P:density-lower-bound-3} that
\[  \log \left( \frac{e^{c_1 r_3} \widetilde{I}_{\ep,u}(r_3,x)  }{e^{c_1 r} \widetilde{I}_{\ep,u}(r,x)  } \right) \geq - C(c_7,n,c_1,r_3). \]
This implies 
\[ \widetilde{I}_{\ep,u}(r,x) \leq C(c_7,n,c_1,r_3) \delta_0 \]
which is the desired upper bound by \eqref{E:delta0}.

It remains to treat the case $\ep \leq r \leq c_6 \ep^{4/5}$. Integrating the almost monotonicity formula \eqref{E:boundary-mono-1} over $[r,c_6 \ep^{4/5}]$ and using the pointwise bound \eqref{E:xi-bound}, we obtain
\[ e^{c_1 r} \widetilde{I}_{\ep,u}(r,x) \leq C(n,c_1,r_3,c_6) +e^{c_1 c_6 \ep^{4/5}} \widetilde{I}_{\ep,u}(c_6 \ep^{4/5},x) \]
which yields the desire result as $\widetilde{I}_{\ep,u}(c_6 \ep^{4/5},x) \leq c_9(E_0+1)$.
\end{proof}

We conclude this section with the following proposition which says that the energy density ratio is almost monotone modulo a term of order $r^{1/8}$ for the range of radii in $[c_6 \ep^{4/5},r_3]$. 

\begin{proposition}[Almost monotonicity of energy density ratio]
\label{P:almost-monotonicity}
For any $b \in (0,1-\alpha)$, there exist constants
\[ \ep_{10}=\ep_{10}(b,\Lambda_0, C_0, W, n, i_0, k_0, \kappa_0, \dist_g(\tilde{U},\partial U))>0,\]
\[ c_{10}=c_{10}(b,E_0,\Lambda_0, C_0, W, n, i_0, k_0, \kappa_0, \dist_g(\tilde{U},\partial U)) > 1\]
such that for any $0<\ep <\ep_{10}$, $x \in \partial \Omega \cap \tilde{U}$ and $c_6 \ep^{4/5} \leq s \leq r \leq r_3$, we have
\begin{equation}
\label{E:concentric-mono-1}
I_{\ep,u}(r,x) \geq e^{-c_{10}(r-s)} I_{\ep,u}(s,x) -c_{10} r^{1/8} ;
\end{equation}
In fact, the same holds for $x \in \Omega \cap \tilde{U}$ provided furthermore that $r<\dist(x,\partial \Omega)$. 
\end{proposition}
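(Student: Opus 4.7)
The plan is to integrate the boundary (resp. interior) almost-monotonicity inequality of Lemma \ref{L:boundary-mono-1} (resp. Lemma \ref{L:interior-mono-1}) applied to $\widetilde{I}_{\ep,u}$, then bound the resulting error term using the integral discrepancy estimate of Proposition \ref{P:xi-integral-bound} combined with the uniform energy density ratio upper bound from Proposition \ref{P:density-upper-bound}, and finally convert the statement from $\widetilde{I}_{\ep,u}$ back to $I_{\ep,u}$ via the volume comparison \eqref{E:half-ball-comparison}.

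More precisely, fix $x \in \partial \Omega \cap \tilde{U}$ (the interior case is analogous). Shrinking $r_3$ if necessary so that $r_3 \leq \min\{r_2, \tfrac{1}{2}i_0, \tfrac{1}{2\kappa_0}\}$, Lemma \ref{L:boundary-mono-1} gives, for every $\rho \in [s,r]$,
\[
\frac{d}{d\rho}\!\left( e^{c_1\rho}\,\omega_{n-1}\,\widetilde{I}_{\ep,u}(\rho,x) \right) \;\geq\; -\,e^{c_1\rho}\,\rho^{-n}\!\int_{\Omega\cap B_\rho(x)} \widetilde{\xi}_\ep^+(u).
\]
Since $\widetilde{\xi}_\ep(u) = \xi_\ep(u) + \lambda u - \Lambda_0 C_0$, one has the pointwise bound $\widetilde{\xi}_\ep^+(u) \leq \xi_\ep^+(u) + 2\Lambda_0 C_0$. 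Hence, using \eqref{E:half-ball-comparison} for the trivial part and Proposition \ref{P:xi-integral-bound} together with the bound $I_{\ep,u}(\rho,x) \leq c_9(E_0+1)$ from Proposition \ref{P:density-upper-bound} (which apply since $\rho \geq c_6 \ep^{4/5}$), one gets
\[
\rho^{-n}\!\int_{\Omega\cap B_\rho(x)} \widetilde{\xi}_\ep^+(u) \;\leq\; \frac{c_7\bigl(c_9(E_0+1)+1\bigr)}{\rho^{7/8}} + C(n)\,\Lambda_0 C_0 \;\leq\; \frac{C(E_0)}{\rho^{7/8}}
\]
for suitable $C(E_0)$ (absorbing the bounded term into the singular one, using $\rho \leq r_3 < 1$).

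Integrating from $s$ to $r$ and using $\int_s^r \rho^{-7/8}\,d\rho \leq 8\,r^{1/8}$ yields
\[
e^{c_1 r}\,\widetilde{I}_{\ep,u}(r,x) \;\geq\; e^{c_1 s}\,\widetilde{I}_{\ep,u}(s,x) \;-\; C'(E_0)\,r^{1/8}.
\]
Finally, the relation
\[
\widetilde{I}_{\ep,u}(\rho,x) \;=\; I_{\ep,u}(\rho,x) + \frac{1}{\omega_{n-1}\rho^{n-1}}\!\int_{B_\rho(x)\cap\Omega}(-\lambda u + \Lambda_0 C_0)\,dv_g
\]
together with \eqref{E:half-ball-comparison} gives $|\widetilde{I}_{\ep,u}(\rho,x) - I_{\ep,u}(\rho,x)| \leq C(n)\Lambda_0 C_0\,\rho \leq C\,r^{1/8}$ for $\rho \leq r \leq r_3 < 1$. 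Substituting and rearranging then yields
\[
I_{\ep,u}(r,x) \;\geq\; e^{-c_1(r-s)}\,I_{\ep,u}(s,x) \;-\; c_{10}\,r^{1/8},
\]
which, taking $c_{10} \geq c_1$, is the claimed inequality \eqref{E:concentric-mono-1}.

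The main technical point is not any single step but rather keeping track of all of the conversions: the discrepancy appearing on the right of the monotonicity formula is $\widetilde{\xi}_\ep$, the integral bound of Proposition \ref{P:xi-integral-bound} is phrased for $\xi_\ep^+$, and the quantity we want to monotonize is $I_{\ep,u}$; each passage introduces an additive error of order $\rho$, which must be shown to be absorbed by the dominant $r^{1/8}$ term (which works because $r \leq r_3 < 1$). The interior case $x \in \Omega$ with $r < \dist_g(x,\partial\Omega)$ is handled identically, replacing Lemma \ref{L:boundary-mono-1} by the sharper Lemma \ref{L:interior-mono-1} (and the factor $e^{c_1\rho}$ by $e^{(n+1)\sqrt{k_0}\rho}$); the same integral bound on $\xi_\ep^+$ applies verbatim and the same conclusion follows.
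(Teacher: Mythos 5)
Your proposal is correct and follows essentially the same route as the paper: integrate the boundary (resp.\ interior) almost-monotonicity inequality for $\widetilde{I}_{\ep,u}$, estimate the discrepancy integral via Proposition~\ref{P:xi-integral-bound} and the density upper bound of Proposition~\ref{P:density-upper-bound}, and convert back to $I_{\ep,u}$ via~\eqref{E:I-tilde-estimate}; the paper compresses the first two steps by directly citing the inequality~\eqref{E:mono-boundary} established in Proposition~\ref{P:density-lower-bound-3}. One small simplification: since $\widetilde{W}\geq W$ on the range of $u$, one has $\widetilde{\xi}_\ep\leq\xi_\ep$ pointwise, hence $\widetilde{\xi}_\ep^+\leq\xi_\ep^+$ with no additive constant, so the $2\Lambda_0 C_0$ term and the appeal to~\eqref{E:half-ball-comparison} for it are unnecessary.
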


\begin{proof}
We again just provide the proof for the case $x \in \partial \Omega$. Using Proposition \ref{P:density-upper-bound} and \eqref{E:mono-boundary}, for $\ep < \min\{\ep_7,\ep_9\}$, we have for $\rho \in [c_6 \ep^{4/5},r_3]$ that
\[ \frac{d}{d\rho}\left( e^{c_1 \rho} \widetilde{I}_{\ep,u}(\rho,x) \right) \geq  -C(c_1,c_7,c_9,E_0) \rho^{-7/8}. \]
Integrating $\rho$ over $[s,r]$ gives 
\[ \widetilde{I}_{\ep,u}(r,x) \geq e^{-c_1 (r-s)} \widetilde{I}_{\ep,u}(s,x) -   C(c_1,c_7,c_9,E_0) r^{1/8},\]
from which \eqref{E:concentric-mono-1} easily follows by \eqref{E:I-tilde-estimate} (since $r \leq r^{1/8}$). 
\end{proof}

\section{Density ratio bounds and rectifiability of the limit varifold}
\label{S:rectifiable}

In this section, we assume $u_i$, $\ep_i$ and $\lambda_i$ are sequences given in Section \ref{SS:assumption}. Recall that the associated Radon measures $\mu_i$ on $U$ and the associated varifolds $V_i \in \bV_{n-1}(U)$ are defined in \eqref{E:mu0} and \eqref{E:asso-varifold} respectively. By the compactness of Radon measures, after possibly passing to a subsequence, there exists a Radon measure $\mu$ on $U$ and a varifold $V \in \bV_{n-1}(U)$ such that 
\[ \mu_i \to \mu \qquad \text{ and } \qquad V_i \to V \]
in the sense of measures. Note that $\|V_i\|=\mu_i \mres {\{| \nabla u_i| \neq 0\}}$ from the definition and observe that the support (as measures on $U$) of both $\mu$ and $\|V\|$ is contained in $\overline{\Omega} \cap U$. The main result of this section is to show that $V$ is rectifiable in $U$ and that $\|V\|=\mu$. Note that the rectifiability of $V$ in the interior of $\Omega$ was established in \cite{Hutchinson2000,Guaraco2018}. Our main point here is to prove that the same continues to hold up to the boundary $\partial \Omega$. 

\subsection{Density ratio bound}
\label{SS:density-bound}

We first begin with a lemma which says that any point in $\spt \mu$ can be approximated by a sequence of points $x_i$ in the transition region $\{|u| \leq \alpha \}$.

\begin{lemma}
\label{L:Hausdorff}
For each $x \in \spt \mu$, there exists (after possibly passing to a subsequence) a sequence $x_i \in (\overline{\Omega} \cap U) \cap \{|u_i|\leq \alpha\}$ such that $x_i \to x$.
\end{lemma}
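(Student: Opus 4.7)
The plan is to argue by contradiction. Suppose $x \in \spt \mu$ but no such subsequence exists. Then there is some $r > 0$, which I shrink so that $\overline{B_r(x)} \subset U$ and $B_r(x) \cap \overline{\Omega}$ is connected (possible since $\partial\Omega$ is smooth), with the property that $\{|u_i| \leq \alpha\} \cap B_r(x) \cap \overline{\Omega} = \emptyset$ for all sufficiently large $i$. By continuity on the connected set $B_r(x) \cap \overline{\Omega}$, each $u_i$ has constant sign of absolute value exceeding $\alpha$; after extracting a subsequence I may assume $u_i > \alpha$ throughout.

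Fix $\eta \in (1/2, 1)$ and apply Lemma \ref{L:refine-estimate}(i) at $x$ with radius parameter $r/4$: since $u_i \geq \alpha$ on $B_{3r/4}(x) \cap \Omega \subset B_r(x) \cap \Omega$, for $i$ large enough that $\ep_i^{1-\eta} \leq r/4$ and $\ep_i < \ep_4$, I obtain
\[
1 - c_4 \ep_i^\eta \leq u_i \leq 1 + c_4 \ep_i \quad \text{on } B_{r/4}(x) \cap \overline{\Omega}.
\]
Since $W(1)=W'(1)=0$ and $W$ is smooth, Taylor's theorem gives $W(u_i) \leq C(W)(u_i - 1)^2 \leq C\ep_i^{2\eta}$, hence
\[
\int_{B_{r/4}(x) \cap \Omega} \frac{W(u_i)}{\ep_i} \, dv_g \leq C \ep_i^{2\eta - 1} \longrightarrow 0.
\]

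For the Dirichlet contribution, I would test \eqref{A:1a} against $\psi^2 (u_i - 1)$, where $\psi \in C_c^\infty(B_{r/4}(x))$ satisfies $\psi \equiv 1$ on $B_{r/8}(x)$. Integration by parts over $\Omega \cap U$ produces no contribution from $\partial\Omega \cap U$ thanks to the Neumann condition \eqref{A:1b}. After applying Cauchy-Schwarz to the cross term,
\[
\frac{\ep_i}{2} \int_\Omega \psi^2 |\nabla u_i|^2 + \int_\Omega \frac{W'(u_i)(u_i-1)}{\ep_i}\psi^2 \leq 2\ep_i \int_\Omega (u_i-1)^2 |\nabla \psi|^2 + |\lambda_i| \int_\Omega |u_i - 1|\psi^2 .
\]
On $\spt \psi$ we have $u_i \geq \alpha$, and the mean value theorem combined with $W'(1)=0$ and $W''(t) \geq \kappa$ for $|t|\geq \alpha$ gives $W'(u_i)(u_i - 1) \geq \kappa(u_i - 1)^2 \geq 0$, so the second left-hand-side term can be dropped. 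The right-hand side is $O(\ep_i^{1+2\eta}) + O(\ep_i^\eta) \to 0$, so $\int_{B_{r/8}(x) \cap \Omega} \ep_i|\nabla u_i|^2 \to 0$ and thus $\mu_i(B_{r/8}(x)) \to 0$. By weak lower semicontinuity on open sets, $\mu(B_{r/8}(x)) \leq \liminf_i \mu_i(B_{r/8}(x)) = 0$, contradicting $x \in \spt \mu$.

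The main delicate point is the decay of the Dirichlet energy in the pure-phase region. The pointwise discrepancy bound from Proposition \ref{P:xi-bound} is of order $\ep_i^{-4/5}$ and far too weak on its own, so one must combine the sharp pointwise estimate of Lemma \ref{L:refine-estimate} with an integration-by-parts that directly exploits the Neumann condition \eqref{A:1b} to eliminate the boundary term. This is precisely why the argument is insensitive to whether $x$ lies in the interior of $\Omega$ or on $\partial\Omega$, and requires no convexity hypothesis on $\partial\Omega$.
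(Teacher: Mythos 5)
Your proof is correct and takes a genuinely different route from the paper's. The paper applies Bochner's formula to obtain the differential inequality $\Delta\bigl(\tfrac{\ep_i|\nabla u_i|^2}{2}\bigr) \geq \tfrac{\kappa}{2\ep_i}|\nabla u_i|^2$ on the set where $u_i > \alpha$, then multiplies by a nonnegative $\phi \in C^2_c(B_{\overline{r}}(x)\cap\overline{\Omega})$ and integrates by parts twice; this produces boundary terms on $\partial\Omega$ involving the second fundamental form $B(\nabla u_i, \nabla u_i)$ (via \cite[Lemma 3.1]{Tonegawa2003}) and $\partial\phi/\partial\nu$, which are then absorbed by invoking Lemma \ref{L:local-boundary-energy} together with the refined $C^0$ estimates. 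You instead test the PDE \eqref{A:1a} directly against $\psi^2(u_i-1)$: the Neumann condition annihilates the single boundary integral that arises, the convexity $W''\geq\kappa$ beyond $\alpha$ makes the zeroth-order term a good sign via the mean value theorem, and Cauchy--Schwarz gives a standard Caccioppoli-type decay. Your argument is more self-contained (it avoids Lemma \ref{L:local-boundary-energy} and the boundary second-fundamental-form term entirely) and is arguably more elementary; the paper's Bochner route is closer in spirit to the machinery already used in Section \ref{SS:discrepancy}, but for this particular lemma your energy method is cleaner. One small housekeeping point you have already implicitly handled but worth making explicit: Lemma \ref{L:refine-estimate} is stated for $x$ in a fixed $\tilde{U}\Subset U$, so one should choose such a $\tilde{U}$ containing $B_r(x)$ before invoking it.
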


\begin{proof}
We argue by contradiction as in the proof of Proposition 4.1 in \cite{Hutchinson2000}. Let $x \in \spt \mu$. We first assume $x \in \partial \Omega$. Suppose, on the contrary, that there exists some $\overline{r}>0$ such that $B_{\overline{r}}(x) \Subset U$ and $|u_i| >  \alpha $ on $B_{\overline{r}}(x) \cap \overline{\Omega}$ for all large $i$. By shrinking $\overline{r}$ such that $B_{\overline{r}}(x) \cap \overline{\Omega}$ is connected and passing to a subsequence, we can suppose $u_i > \alpha$. By Lemma \ref{L:refine-estimate} (shrinking $\overline{r}$ further if necessary) with a fixed $\eta \in (\frac{1}{2},1)$, we have for all large $i$ that $|u_i-1| \leq c_4 \ep_i^\eta$ and hence $\ep_i^{-1}W(u_i) = \mathcal{O}(\ep_i^{2\eta-1})$ on $B_{\overline{r}/3}(x) \cap \Omega$. We have from Bochner formula \eqref{E:Bochner} that for all large $i$ (such that $(n-1)k_0 \leq \frac{\kappa}{2\ep_i}$)
\[ \Delta \left( \frac{\ep_i |\nabla u_i|^2}{2} \right) \geq  \frac{\kappa}{2\ep_i} |\nabla u_i|^2 . \]
Multiplying the above by any nonnegative $\phi \in C^2_c(B_{\overline{r}}(x) \cap \overline{\Omega})$ and integrating by part, we obtain 
\begin{equation}
\begin{aligned}
& \int_{B_{\overline{r}}(x) \cap \Omega} \phi \frac{\ep_i |\nabla u_i|^2}{2} \\
\leq & \frac{\ep_i^2}{\kappa} \int_{B_{\overline{r}}(x) \cap \Omega} (\Delta \phi)  \frac{\ep_i |\nabla u_i|^2}{2} + \frac{\ep_i^2}{\kappa} \int_{B_{\overline{r}}(x) \cap \partial \Omega}  \left( \ep_i \phi B(\nabla u_i,\nabla u_i) - \frac{\partial \phi}{\partial \nu}   \frac{\ep_i |\nabla u_i|^2}{2} \right) d\mathcal{H}^{n-1}\\
\leq & \frac{E_0}{\kappa} \ep_i^2 \|\phi\|_{C^2}  + \frac{1+2 \kappa_0}{\kappa} \ep_i^2 \|\phi\|_{C^1} \int _{B_{\overline{r}}(x) \cap \partial \Omega} e_{\ep_i}(u_i) \; d\mathcal{H}^{n-1}
\end{aligned}
\end{equation}
where $B$ is the second fundamental form of $\partial \Omega$. By the proof of Lemma \ref{L:local-boundary-energy} together with the estimates $\ep_i^{-1}W(u_i) = \mathcal{O}(\ep^{2\eta-1})$ with $\eta> 1/2$ and letting $i \to \infty$, we have $\mu (B_{\overline{r}/3}(x)) =0$, contradicting that $x \in \spt \mu$. The case $x \notin \partial \Omega$ is indeed simpler. We further shrink $\overline{r}$ such that $B_{2\overline{r}}(x) \Subset \Omega \cap U$ and there are no boundary terms when we integrate by part.
\end{proof}

As a key step towards proving the rectifiability of the limit varifold $V$, we now establish an up-to-the-boundary locally uniform density ratio upper bound for the limit measure $\mu$. Note that compared to \cite[Proposition 4.1]{Hutchinson2000}), we do not claim at this stage that there is a uniform lower bound for the density ratio centered at a boundary point. This is due to the possible complication that the $x_i$'s in Lemma \ref{L:Hausdorff} could be a sequence of interior points approaching a boundary point $x \in \spt \mu \cap \partial \Omega$. This poses new difficulties when we want to apply monotonicity formulas which are either centered at a boundary point (Lemma \ref{L:boundary-mono-1}) or stays away from the boundary (Lemma \ref{L:interior-mono-1}).

\begin{proposition}[Density ratio bounds for the limit measure]
\label{P:mu-density-bound}
There exist constants
\[ \theta_1=\theta_1(\Lambda_0, C_0, W, n, i_0, k_0, \kappa_0, \dist_g(\tilde{U},\partial U)) >0, \]
\[ \theta_2=\theta_2(E_0,\Lambda_0, C_0, W, n, i_0, k_0, \kappa_0, \dist_g(\tilde{U},\partial U)) >0 \]
such that for any $x \in (\spt \mu \cap \tilde{U}) \cap \partial \Omega$, we have for all $0 < r < \frac{r_3}{2}$,
\begin{equation}
\label{E:boundary-mu-density-bound}
\frac{\mu(B_r(x))}{\omega_{n-1} r^{n-1}} \leq \theta_2.
\end{equation}
Moreover, for any $x \in (\spt \mu \cap \tilde{U}) \cap \Omega$, we have for all $0<r<\frac{1}{2} \min\{r_3,\dist_g(x,\partial \Omega)\}$, 
\begin{equation}
\label{E:mu-density-bound}
\theta_1 \leq \frac{\mu(B_r(x))}{\omega_{n-1} r^{n-1}} \leq \theta_2.
\end{equation}
Here, $r_3$ is the constant appearing in Proposition \ref{P:density-lower-bound-3} with $b=\frac{1-\alpha}{2}$.
\end{proposition}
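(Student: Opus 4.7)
The starting observation is the identity
\[ \mu_i(B_r(x)) = \frac{\omega_{n-1} r^{n-1}}{h_0}\, I_{\ep_i,u_i}(r,x), \]
valid whenever $B_r(x) \subset U$. The plan is to pass both the $\ep$-level upper bound of Proposition \ref{P:density-upper-bound} and the $\ep$-level lower bound of Proposition \ref{P:density-lower-bound-3} to the varifold limit $\mu_i \to \mu$, fixing once and for all $b := (1-\alpha)/2$ (so the constants $c_8, c_9, r_3$ are determined).

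For the upper bounds in both \eqref{E:boundary-mu-density-bound} and \eqref{E:mu-density-bound} I apply Proposition \ref{P:density-upper-bound} at $x$ itself. For a fixed admissible $r$ (namely $r<r_3/2$, and additionally $r < \tfrac{1}{2}\dist_g(x,\partial\Omega)$ when $x \in \Omega$), one has $\ep_i < r$ for all large $i$, so $I_{\ep_i,u_i}(r,x) \leq c_9(E_0+1)$, and hence $\mu_i(B_r(x)) \leq (c_9(E_0+1)/h_0)\,\omega_{n-1} r^{n-1}$. Since $B_r(x)$ is open, lower semicontinuity $\mu(B_r(x)) \leq \liminf_i \mu_i(B_r(x))$ with respect to the weak-$*$ topology yields the asserted upper bound with $\theta_2 := c_9(E_0+1)/h_0$.

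For the interior lower bound in \eqref{E:mu-density-bound}, the obstacle is that Proposition \ref{P:density-lower-bound-3} requires the center to lie in the transition region $\{|u_i| \leq 1-b\}$, a condition $x$ itself need not satisfy. The idea is to replace $x$ by approximating points. Applying Lemma \ref{L:Hausdorff} I extract (along a further subsequence) points $x_i \in \overline{\Omega} \cap U \cap \{|u_i| \leq \alpha\}$ with $x_i \to x$, and since $\dist_g(x,\partial\Omega)>0$ one automatically has $x_i \in \Omega$ and $\dist_g(x_i,\partial\Omega) > \tfrac{1}{2}\dist_g(x,\partial\Omega)$ for all large $i$. Hence for any $\delta \in (0,1/2)$, any $r < \tfrac{1}{2}\min\{r_3,\dist_g(x,\partial\Omega)\}$, and all $i$ sufficiently large, Proposition \ref{P:density-lower-bound-3} applied at $x_i$ with radius $(1-2\delta)r$ gives $I_{\ep_i,u_i}((1-2\delta)r,x_i) \geq c_8$, equivalently
\[ \mu_i(B_{(1-2\delta)r}(x_i)) \geq \frac{c_8}{h_0}\,\omega_{n-1}\bigl((1-2\delta)r\bigr)^{n-1}. \]
Choosing $\phi \in C_c(B_r(x))$ with $0 \leq \phi \leq 1$ and $\phi \equiv 1$ on $\overline{B_{(1-\delta)r}(x)}$, the inclusion $B_{(1-2\delta)r}(x_i) \subset \{\phi = 1\}$ (valid for $i$ large since $x_i \to x$) together with weak-$*$ convergence $\mu_i(\phi) \to \mu(\phi)$ yields
\[ \mu(B_r(x)) \geq \mu(\phi) = \lim_i \mu_i(\phi) \geq \frac{c_8}{h_0}\,\omega_{n-1}\bigl((1-2\delta)r\bigr)^{n-1}. \]
Letting $\delta \to 0$ completes the lower bound with $\theta_1 := c_8/h_0$.

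The main delicacy is ensuring that the approximating sequence from Lemma \ref{L:Hausdorff}, which a priori is only known to lie in $\overline{\Omega}$, actually sits in the interior of $\Omega$ uniformly in $i$ so that the interior version of Proposition \ref{P:density-lower-bound-3} applies; this is automatic once $x$ has positive distance from $\partial\Omega$. The absence of a matching lower bound at boundary points reflects the genuine possibility, flagged in the introduction, of boundary clustering, where the approximating centers $x_i$ could approach a boundary point $x$ from the interior in such a way that the scale at which Proposition \ref{P:density-lower-bound-3} applies at $x_i$ shrinks to zero, breaking the transfer argument above.
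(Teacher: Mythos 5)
Your proposal is correct and follows essentially the same route as the paper: pass the $\ep$-level density-ratio bounds (Propositions \ref{P:density-upper-bound} and \ref{P:density-lower-bound-3}) to the limit $\mu_i \to \mu$, using Lemma \ref{L:Hausdorff} to replace an interior center $x$ by nearby points $x_i$ in the transition region so that the lower bound applies. The only meaningful difference is in the limit passage for the interior lower bound: the paper asserts $\mu(B_r(x)) \geq \tfrac{1}{2}\mu_i(B_r(x))$ for large $i$ and absorbs the slack by evaluating at radius $r/2$ around $x_i$, whereas you use a $C_c$ cutoff supported in $B_r(x)$ equal to $1$ on $\overline{B_{(1-\delta)r}(x)}$ and send $\delta \to 0$; your version sidesteps the implicit need for $\mu(\partial B_r(x))=0$ and yields the slightly cleaner constants $\theta_1 = c_8/h_0$, $\theta_2 = c_9(E_0+1)/h_0$, but this is a cosmetic refinement rather than a different argument.
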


\begin{proof}
Let $x \in (\spt \mu \cap \tilde{U}) \cap \partial \Omega$. Fix any $0 < r < \frac{r_3}{2}$. Since $\mu_i \to \mu$ as measures, we will have for all large $i$
\[ \frac{\mu(B_r(x))}{\omega_{n-1} r^{n-1}} \leq \frac{2 \mu_i(B_r(x))}{\omega_{n-1} r^{n-1}} = \frac{2}{h_0} I_{\ep_i,u_i}(r, x).\]
Hence, the desired assertion follows from Proposition \ref{P:density-upper-bound} with $\theta_2:=\frac{2}{h_0} c_9 (E_0+1)$.

For $x \in (\spt \mu \cap \tilde{U}) \cap \Omega$. By Lemma \ref{L:Hausdorff}, after passing to a subsequence, we have a sequence of interior points $x_i \in \Omega \cap \tilde{U}$ such that $|u_i(x_i)| \leq \alpha$ for all $i$ and $x_i \to x$. Fix any $0 < r < \frac{1}{2} \min\{r_3,\dist_g(x,\partial \Omega)\}$. The upper bound $\theta_2$ is obtained exactly as before. For the lower bound, we observe for all large $i$
\[ \frac{\mu(B_r(x))}{\omega_{n-1} r^{n-1}} \geq \frac {\mu_i(B_r(x))}{2 \omega_{n-1} r^{n-1}} \geq \frac{ \mu_i(B_{r/2}(x_i))}{2 \omega_{n-1} r^{n-1}} = \frac{1}{2^n h_0} I_{\ep_i,u_i}(r/2, x_i).\]
Hence, the desired assertion follows from Proposition \ref{P:density-lower-bound-3} with $\theta_1:=\frac{c_8}{2^n h_0} $.
\end{proof}

Exactly the same arguments as in the proof of Lemma \ref{L:Hausdorff} and Proposition \ref{P:mu-density-bound} yields the following result.

\begin{proposition}[Uniform convergence away from limit interface]
\label{P:Hausdorff}
Either $u_i \to 1$ or $u_i \to -1$ locally uniformly on $(\Omega \cap U) \setminus \spt \|V\|$. In particular, we have $M:=\partial^* \{u=1\} \subset \spt \|V\| \cap (\Omega \cap U)$. Moreover, $\frac{\ep_i |\nabla u_i|^2}{2}$ and $\frac{W(u_i)}{\ep_i}$ both converges locally uniformly to $0$ on $(\Omega \cap U) \setminus \spt \|V\|$.
\end{proposition}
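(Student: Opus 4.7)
The plan is to re-run the arguments of Lemma \ref{L:Hausdorff} and Proposition \ref{P:mu-density-bound}, now at an interior point $x_0 \in (\Omega \cap U) \setminus \spt \|V\|$, chained with the refined $C^0$-estimate Lemma \ref{L:refine-estimate} and the Bochner inequality.

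Fix $x_0 \in (\Omega \cap U) \setminus \spt \|V\|$ and choose $\bar{r}>0$ so that $B_{2\bar{r}}(x_0) \Subset (\Omega \cap U) \setminus \spt \|V\|$. I first claim that $|u_i|>\alpha$ on $B_{\bar{r}}(x_0)$ for all sufficiently large $i$. Arguing by contradiction, suppose along a subsequence there exist $y_i \in B_{\bar{r}}(x_0)$ with $|u_i(y_i)| \le \alpha$. Since $y_i$ sits at a definite distance from $\partial\Omega$, Proposition \ref{P:density-lower-bound-3} applied at $y_i$ with a fixed radius $r \in (0, \min\{r_3,\bar{r}\})$ yields $I_{\ep_i,u_i}(r,y_i) \geq c_8$. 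Passing to a further subsequence with $y_i \to y_\infty \in \overline{B_{\bar{r}}(x_0)}$ and using $\mu_i \to \mu$ weakly, we get $\mu(B_{2r}(y_\infty))>0$, hence $y_\infty \in \spt \mu$. The most subtle point is then to promote this to $y_\infty \in \spt \|V\|$, which contradicts $B_{2\bar{r}}(x_0) \cap \spt \|V\| = \emptyset$; for this one uses that whenever $|u_i(y_i)| \le \alpha$, the gradient $|\nabla u_i|$ cannot vanish throughout $B_{c\ep_i}(y_i)$ in a way compatible with the Allen-Cahn equation \eqref{A:1a}, so a definite portion of the energy-density lower bound is carried by $\{|\nabla u_i|\ne 0\}$ and hence contributes to $\|V_i\|$.

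Once $|u_i|>\alpha$ on $B_{\bar{r}}(x_0)$ is established, continuity and connectedness of the ball force a subsequence with consistent sign, say $u_i>\alpha$. Lemma \ref{L:refine-estimate} with any fixed $\eta \in (\tfrac{1}{2},1)$ then gives $|u_i-1| \le c_4 \ep_i^\eta$ on $B_{\bar{r}/3}(x_0)$, yielding (i) the uniform convergence $u_i \to 1$ and (ii) the pointwise bound $W(u_i)/\ep_i \le C \ep_i^{2\eta-1} \to 0$ via Taylor expansion at $W(1)=W'(1)=0$. For the uniform vanishing of the gradient energy I repeat the Bochner computation from the proof of Lemma \ref{L:Hausdorff}, now \emph{without} any boundary contribution since $B_{\bar{r}}(x_0) \Subset \Omega$. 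Since $|u_i|>\alpha$ forces $W''(u_i) \ge \kappa$, Bochner's formula \eqref{E:Bochner} combined with \eqref{A:1a} gives, for $i$ large,
\[ \Delta\!\left( \tfrac{\ep_i|\nabla u_i|^2}{2} \right) \geq \tfrac{\kappa}{2\ep_i} |\nabla u_i|^2 . \]
Testing against a cutoff $\phi \in C_c^\infty(B_{\bar{r}}(x_0))$ with $\phi \equiv 1$ on $B_{\bar{r}/2}(x_0)$, integrating by parts (no boundary term on $\partial\Omega$ because $\spt\phi \Subset \Omega$), and using $E_{\ep_i}(u_i) \le E_0$, yields the $L^1$ decay $\int_{B_{\bar{r}/2}(x_0)} \ep_i |\nabla u_i|^2 \leq C\ep_i^2 \to 0$. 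Since $\ep_i|\nabla u_i|^2/2$ is subharmonic modulo a small error, a standard mean-value argument promotes this $L^1$-decay to uniform decay on $B_{\bar{r}/3}(x_0)$.

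The inclusion $M \subset \spt \|V\| \cap (\Omega \cap U)$ is then immediate: on each connected component of $(\Omega \cap U) \setminus \spt \|V\|$ the uniform convergence forces $u \equiv 1$ or $u \equiv -1$ a.e., so $M=\partial^*\{u=1\}$ cannot enter any such component. I expect the identification $\spt \mu \cap (\Omega\cap U) = \spt \|V\| \cap (\Omega\cap U)$ used in the first paragraph to be the main technical hurdle; the remaining steps are a direct localization of the arguments already developed in Lemma \ref{L:Hausdorff}.
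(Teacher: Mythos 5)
Your overall structure mirrors what the paper intends, and most of the steps are right: the contradiction argument via Proposition \ref{P:density-lower-bound-3}, the sign choice via connectedness, the passage to $|u_i-1|\leq c_4\ep_i^\eta$ via Lemma \ref{L:refine-estimate} (with $\eta>\frac{1}{2}$ so that $W(u_i)/\ep_i = O(\ep_i^{2\eta-1})\to 0$), and the Bochner/subharmonicity/mean-value chain for the uniform decay of $\ep_i|\nabla u_i|^2$ on an interior ball. You are also right to single out the passage from $y_\infty \in \spt\mu$ to $y_\infty\in\spt\|V\|$ as the crux. The issue is that the reason you offer for it does not work.

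The claim that ``whenever $|u_i(y_i)|\leq\alpha$, the gradient $|\nabla u_i|$ cannot vanish throughout $B_{c\ep_i}(y_i)$ in a way compatible with the Allen--Cahn equation'' is false as a local statement: $u\equiv a$ with $W'(a)=\ep\lambda$, and $a$ near the local maximum $\gamma$ of $W$, is a perfectly good local solution of \eqref{A:1a} with $\nabla u\equiv 0$, $|a|\leq\alpha$, and $W(a)/\ep$ of exactly the size that makes $I_{\ep,u}(\ep,y)\gtrsim 1$. What rules this out is the \emph{global} energy bound (via unique continuation, a constant on a ball forces a constant on the whole connected component and hence infinite $\ep^{-1}\int W$), not the PDE alone; and even granting the global obstruction, it is purely qualitative and does not yield the quantitative lower bound $\|V_i\|(B_r(y_i))\geq c\,r^{n-1}$ that you need in order to conclude $y_\infty\in\spt\|V\|$. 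In other words, Proposition \ref{P:density-lower-bound-3} gives a lower bound on $\mu_i$, coming from the potential term $W(u_i)/\ep_i$, and nothing in your argument controls how much of that mass sits on $\{|\nabla u_i|=0\}$, which is exactly the part that $\|V_i\|=\mu_i\mres\{|\nabla u_i|\neq 0\}$ discards.

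The clean resolution is the one the paper makes available only a page later: since $h_0\,\mu_i\mres\{|\nabla u_i|=0\}\leq |\xi_i|\,dv_g$, Theorem \ref{T:xi-vanish} and Theorem \ref{T:V-rectifiable} give $\|V\|=\mu$, hence $\spt\|V\|=\spt\mu$, and your first paragraph then closes without any extra work. (Note that Theorem \ref{T:xi-vanish} and Theorem \ref{T:V-rectifiable} do not rely on Proposition \ref{P:Hausdorff}, so there is no circularity in invoking them first; the paper itself is somewhat casual about this ordering in writing ``exactly the same arguments as in Lemma \ref{L:Hausdorff},'' which only yield the statement with $\spt\mu$.) Alternatively one could produce a direct lower bound on $\int_{B_r(y_i)}\ep_i|\nabla u_i|^2$ by an $\ep_i$-rescaling and a compactness/contradiction argument (if the rescaled gradient energy were small, the blow-up limit would be the constant $\gamma$, whose potential energy over $B_L$ grows like $L^n$ and hence violates the density upper bound from Proposition \ref{P:density-upper-bound} for $L$ large), but that is a genuinely different argument from the one you sketch, and is not ``a direct localization of Lemma \ref{L:Hausdorff}.''
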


\subsection{Vanishing of discrepancy measure}
\label{SS:discrepancy-measure}

Let $e_i:=e_{\ep_i}(u_i)$ and $\xi_i:=\xi_{\ep_i}(u_i)$ denote the energy density and discrepancy function as defined in \eqref{E:energy-density} and \eqref{E:discrepancy}. By compactness of Radon measures (on $U$), after possibly passing to a subsequence, we can assume that there exists a Radon measure $|\xi|$ on $U$ such that
\begin{equation}
\label{E:discrepancy-measure}
|\xi_i| dv_g \mres {\Omega \cap U} \to |\xi| 
\end{equation}
in the sense of measures. We call $|\xi|$ the discrepancy measure. Since $|\xi_i| \leq e_i$, we clearly have $\spt |\xi| \subset \spt \mu \subset \overline{\Omega} \cap U$. 

\begin{theorem}[Vanishing of discrepancy measure]
\label{T:xi-vanish}
The discrepancy measure $|\xi|$ vanishes on $U$, i.e. $|\xi|=0$. In other words, the discrepancy function $\xi_i$ converges to $0$ in $L^1_{loc}(\overline{\Omega} \cap U)$. 
\end{theorem}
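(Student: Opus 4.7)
The plan is to split $|\xi_i| = \xi_i^+ + \xi_i^-$ and show that both $\xi_i^\pm dv_g$ converge weakly to the zero Radon measure on $U$. Proposition \ref{P:Hausdorff} already provides uniform decay of $\xi_i$ away from $\spt \mu$, so everything localizes to arbitrarily small neighborhoods of $\spt \mu$.

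For the positive part, fix a nonnegative $\phi \in C_c(U)$ with $K := \spt\phi$, and for small $r > 0$ cover $\spt\mu \cap K$ by balls $\{B_r(x_k)\}_{k=1}^{N(r)}$ with centers in $\spt \mu$, obtained via a Vitali-type argument from a maximal $r$-separated family. The key count $N(r) \leq C r^{-(n-1)}$ follows by separating \emph{interior} centers (those with $\dist_g(x_k, \partial\Omega) > r$, controlled via the interior lower density $\theta_1$ of Proposition \ref{P:mu-density-bound} applied to disjoint balls $B_{r/5}(x_k)$) from centers in an $r$-tube of $\partial\Omega \cap K'$ (counted via $\mathcal{H}^{n-1}(\partial\Omega \cap K') < \infty$ and disjointness). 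Inside each ball, Proposition \ref{P:xi-integral-bound} together with the upper density bound yields $\int_{B_r(x_k) \cap \Omega}\xi_i^+ \, dv_g \leq C r^{n-7/8}$ uniformly in $i$, so summing gives $\sum_k \int_{B_r(x_k)}\xi_i^+ \leq C r^{1/8}$. Combined with Proposition \ref{P:Hausdorff} outside $\bigcup_k B_r(x_k)$, letting $i \to \infty$ and then $r \to 0$ produces $\lim_i \int \phi \xi_i^+ \, dv_g = 0$.

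For the negative part, integrate the appropriate almost monotonicity formula (Lemma \ref{L:interior-mono-1} or \ref{L:boundary-mono-1}) between $\rho_i := c_6 \ep_i^{4/5}$ and a fixed $r < r_3/2$, centered at $x \in \overline{\Omega} \cap \widetilde{U}$; after rearrangement, the RHS is bounded uniformly in $i$ by Proposition \ref{P:density-upper-bound} (for the difference of $\widetilde{I}_{\ep_i,u_i}$ terms) and by $Cr^{1/8}$ (for the $\xi_i^+$ contribution, via Proposition \ref{P:xi-integral-bound} and the previous step). Passing $i \to \infty$, using Fatou's lemma on the LHS at radii $t$ where $\xi^-(\partial B_t(x)) = 0$, yields
\[
\int_0^r \frac{\xi^-(B_t(x))}{t^n}\, dt \leq C \qquad \text{for every } x \in \overline{\Omega} \cap \widetilde{U}.
\]

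Since $\xi_i^- \leq e_i$, in the limit $\xi^- \leq h_0 \mu$, so $\xi^- \ll \mu$ and has no atoms. The integral bound forces $\liminf_{t \to 0} t^{-(n-1)}\xi^-(B_t(x)) = 0$ for every $x$. The same integrated monotonicity in the limit (after dropping the nonnegative $\xi^-$ contribution) shows that $r \mapsto e^{c_1 r} \omega_{n-1}^{-1} r^{-(n-1)} \mu(B_r(x))$ is non-decreasing, so the density $\Theta^{n-1}(\mu, x) := \lim_{r \to 0} \omega_{n-1}^{-1} r^{-(n-1)} \mu(B_r(x))$ exists at every $x$. On $E_0 := \{\Theta^{n-1}(\mu, \cdot) = 0\}$, the inequality $\xi^- \leq h_0 \mu$ gives $\Theta^{n-1}(\xi^-, x) = 0$ for $x \in E_0$; combined with $\mathcal{H}^{n-1}(E_0 \cap \spt \mu) < \infty$ (from the upper density bound on $\mu$), Federer's density comparison lemma yields $\xi^-(E_0) = 0$. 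On the complement $E_+ := \{\Theta^{n-1}(\mu, \cdot) > 0\}$, the positive $\mu$-density at $x \in E_+$ combined with $\liminf = 0$ and Besicovitch differentiation for $\xi^- \ll \mu$ produces $d\xi^-/d\mu(x) = 0$ at $\mu$-a.e. $x \in E_+$, hence $\xi^-|_{E_+} = 0$. The main obstacle is precisely this final measure-theoretic splitting at boundary-clustering points where $\mu$ may have zero lower density: it relies crucially on the up-to-boundary almost monotonicity (Proposition \ref{P:almost-monotonicity}) to guarantee the existence of $\Theta^{n-1}(\mu, x)$ everywhere and thus enable the $E_0$/$E_+$ dichotomy, which the standard interior Hutchinson-Tonegawa argument (based on uniform positive lower density throughout $\spt\mu$) cannot access directly.
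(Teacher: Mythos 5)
Your argument is correct, but it takes a genuinely different path than the paper's. The paper argues by contradiction: assuming $\liminf_{r\to 0}\omega_{n-1}^{-1}r^{-(n-1)}|\xi|(B_r(x_0))\geq\delta_0>0$ at some $x_0\in\spt|\xi|\cap\partial\Omega$, it integrates the boundary almost-monotonicity formula over $\rho\in[t^2,t]$, controls the $\xi^+$-error via Proposition \ref{P:xi-integral-bound}, and produces a logarithmically divergent lower bound $\tfrac{\delta_0}{8}\log(1/t)$ against the bounded difference of density ratios, which is impossible as $t\to 0$. The resulting vanishing $\liminf$ estimate for $|\xi|$ is then fed into the differentiation theory with reference measure $\mathcal{H}^{n-1}\mres\partial\Omega$ (and with reference $\mu$ in the interior, where the lower density bound is available). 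You instead split $|\xi|=\xi^++\xi^-$ and treat the two by separate mechanisms: $\xi^+=0$ by a direct covering count $N(r)\lesssim r^{-(n-1)}$ combined with Proposition \ref{P:xi-integral-bound}, and $\xi^-=0$ via the integrated bound $\int_0^r t^{-n}\xi^-(B_t(x))\,dt<\infty$ plus the $E_0/E_+$ dichotomy built on the existence of $\Theta^{n-1}(\mu,\cdot)$. Both reach the same conclusion. The paper's route is shorter and never needs the existence of $\Theta^{n-1}(\mu,\cdot)$ — differentiating $|\xi|$ against $\mathcal{H}^{n-1}\mres\partial\Omega$ only uses the $\liminf$ estimate — whereas your version cleanly isolates the vanishing of $\xi^+$ as a standalone fact, which is a nice structural observation. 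Two minor corrections: (i) the finiteness $\mathcal{H}^{n-1}(E_0\cap\spt\mu)<\infty$ does not follow from the \emph{upper} density bound but from the \emph{interior lower} density bound of Proposition \ref{P:mu-density-bound}, which forces $E_0\cap\spt\mu\subset\partial\Omega$ (an $\mathcal{H}^{n-1}$-finite smooth hypersurface); (ii) projecting $r$-separated boundary-near centers onto $\partial\Omega$ does not by itself give separation, so the count $N_{\rm bdy}(r)\lesssim r^{-(n-1)}$ should instead be obtained by a disjoint-ball volume comparison inside the $r$-tube around $\partial\Omega$. Neither issue affects the validity of the overall argument.
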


\begin{proof}
We first claim that for all $x \in \spt |\xi| \cap \partial \Omega$, we have
\begin{equation}
\label{E:Radon-Nikodym1}
\liminf_{r \to 0} \frac{|\xi|(B_r(x))}{\omega_{n-1}r^{n-1}}=0.
\end{equation}
We argue by contradiction as in \cite[Proposition 4.3]{Hutchinson2000}. Suppose on the contrary that \eqref{E:Radon-Nikodym1} does not hold, i.e. there exist $\delta_0>0$, $R_0>0$ and $x_0 \in \spt |\xi| \cap \partial \Omega$ such that
\begin{equation}
\label{E:vanish-1}
|\xi|(B_r(x_0)) \geq \delta_0 \omega_{n-1} r^{n-1} \quad \text{ for all $0<r\leq R_0$.} 
\end{equation}
By the density ratio upper bound \eqref{E:boundary-mu-density-bound}, there exists $\theta_2>0$ such that
\begin{equation}
\label{E:vanish-2}
\frac{\mu(B_r(x_0))}{\omega_{n-1} r^{n-1}} \leq \theta_2 \quad \text{   for all $0<r\leq \frac{r_3}{2}$.} 
\end{equation}
Fix $0<t < \min\{R_0,\frac{r_3}{2},r_1,r_2,\frac{1}{\kappa_0}\}$ which will be chosen later. Since $|\xi_i| dv_g \mres {\Omega \cap U} \to |\xi|$ and $\mu_i \to \mu$, by \eqref{E:vanish-1} and \eqref{E:vanish-2}, for any fixed $N \in \mathbb{N}$, there exists some $i >N$ such that 
\begin{equation}
\label{E:vanish-4}
\frac{1}{\omega_{n-1} r^{n-1}} \int_{B_r (x_0) \cap \Omega} |\xi_i| \geq \frac{\delta_0}{2} \quad \text{ for all $r \in [t^2,t]$} 
\end{equation}
and 
\begin{equation}
\label{E:vanish-3}
\frac{\mu_i(B_t(x_0))}{\omega_{n-1}t^{n-1}}  \leq 2 \theta_2.
\end{equation}
Note that $|\xi_i|=\xi_i^+ +\xi_i^-$. By \eqref{E:vanish-4} and Proposition \ref{P:xi-integral-bound}, for all $i$ large enough (so that $\ep_i < \ep_7$ and $c_6 \ep_i^{4/5} \leq t^2$) we have for all $\rho \in [t^2,t]$
\begin{equation}
\begin{aligned}
\frac{1}{(1+3 \kappa_0 \rho)\rho^n} \int_{B_\rho(x_0) \cap \Omega} -\xi_i & \geq \frac{1}{4 \rho^n} \int_{B_\rho (x_0) \cap \Omega} |\xi_i| - \frac{2}{\rho^n} \int_{B_\rho(x_0) \cap \Omega} \xi_i^+  \\
& \geq \frac{\delta_0}{8} \rho^{-1} - \frac{2c_7}{\rho^{7/8}} (I_{\ep_i,u_i}(\rho,x_0)+1) .
\end{aligned}
\end{equation}
Since $x_0 \in \spt \mu$, by Proposition \ref{P:density-upper-bound}, for all $i$ large enough (so that $\ep_i <\min\{\ep_9,t^2\}$), we have for all $\rho \in [t^2,t]$
\begin{equation}
\frac{1}{(1+3 \kappa_0 \rho)\rho^n} \int_{B_\rho(x_0) \cap \Omega} -\xi_i  \geq \frac{\delta_0}{8} \rho^{-1} -C(c_7,c_9,E_0) \rho^{-7/8}.
\end{equation}
By the almost monotonicity formula \eqref{E:boundary-mono-1} (note that $\widetilde{\xi}_i \leq \xi_i$), we have for all $\rho \in [t^2,t]$ 
\[ \frac{d}{d\rho}\left( \omega_{n-1} e^{c_1 \rho} \widetilde{I}_{\ep_i,u_i}(\rho,x_0) \right) \geq \frac{\delta_0}{8} e^{c_1 \rho} \rho^{-1} -C(c_7,c_9,E_0) e^{c_1 \rho} \rho^{-7/8}  \]
Integrating over $[t^2,t]$ and using \eqref{E:I-tilde-estimate}, \eqref{E:vanish-3}, we obtain
\[ C(n,c_1,\Lambda_0,C_0) \left( \frac{h_0}{\omega_{n-1}} 2\theta_2 +t \right) \geq \frac{\delta_0}{8} \log \frac{1}{t} - C(c_7,c_9,E_0,c_1) t^{1/8},\] 
which gives a contradiction if $t$ is chosen sufficiently small. Hence, we have proved \eqref{E:Radon-Nikodym1}. 

By the theory of differentiation for Radon measures (applied to $|\xi|$ and $d \mathcal{H}^{n-1} \mres {\partial \Omega}$), c.f. \cite[P.26]{Simon1983}, we conclude from \eqref{E:Radon-Nikodym1} that $|\xi|(\partial \Omega)=0$, i.e. $|\xi|$ vanishes on $\partial \Omega$. By a similar argument using the interior monotonicity formula \eqref{E:interior-mono-1}, we can show that \eqref{E:Radon-Nikodym1} also holds for interior points $x \in \spt |\xi| \cap \Omega$. Combining this with the density lower bound in \eqref{E:mu-density-bound}, we have
\[ \liminf_{r \to 0} \frac{|\xi|(B_r(x))}{\mu(B_r(x))}=0, \]
from which again implies $|\xi|=0$ inside $\Omega \cap U$ as well.
\end{proof}

\subsection{Rectifiability of limit varifold}
\label{SS:rectifiable}

Let $V_i$ be the varifold associated with $u_i$ as in \eqref{E:asso-varifold}. After passing to a subsequence, we can also assume that there exist $u \in BV(\Omega \cap U)$, $\lambda \in \mathbb{R}$ and $V \in \bV_{n-1}(U)$ such that 
\[ \lambda_i \to \lambda, \quad u_i \to u \; a.e., \quad V_i \to V \; \textrm{in the varifold sense.}\]
Our goal is to show that $V$ is in fact rectifiable up to the boundary $\partial \Omega$. Using the vanishing of the discrepancy measure $|\xi|$, we can establish the following result. Recall that $M:=\partial^*\{u=1\}$ is the reduced boundary of $\{u=1\}$ in $\Omega \cap U$ with outward normal $\nu_M$.

\begin{lemma}[First variation of limit varifold]
\label{L:V-1st-variation}
There exist a constant 
\[ c_{11}=c_{11}(E_0,\Lambda_0, C_0, W, n, i_0, k_0, \kappa_0,\dist_g(\tilde{U},\partial U),dv_g(\Omega \cap U), \mathcal{H}^{n-1}(\partial \Omega \cap U)) > 1\]
such that for any $X \in \mathfrak{X}_c(U)$ with $\spt X \subset \tilde{U}$, we have
\begin{equation}
\label{E:V-first-variation-1}
\left| h_0 \delta V(X) +\lambda \int_{\Omega \cap U} X \cdot Du \right| \leq c_{11} \sup_{\partial \Omega \cap U} |X \cdot \nu|.
\end{equation}
In particular, for any $X \in \mathfrak{X}^{tan}_c(U)$, we have
\begin{equation}
\label{E:V-first-variation-2}
\delta V(X)= -\frac{2\lambda}{h_0} \int_M X \cdot \nu_M \; d\mathcal{H}^{n-1}.
\end{equation}

\end{lemma}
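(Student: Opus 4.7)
The plan is to apply the formula \eqref{E:local-boundary-energy-1} (derived in the proof of Lemma \ref{L:local-boundary-energy}) to the given test field $X \in \mathfrak{X}_c(U)$ with $\spt X \subset \tilde{U}$. This yields the decomposition
\[
h_0 \delta V_i(X) = D_i(X) + \lambda_i \int_{\Omega \cap U} u_i \Div_g X \, dv_g + \int_{\partial \Omega \cap U} (e_{\ep_i}(u_i) - \lambda_i u_i)(X \cdot \nu) \, d\mathcal{H}^{n-1},
\]
where $D_i(X)$ gathers the two discrepancy integrals (on $\{|\nabla u_i| \neq 0\}$ and $\{|\nabla u_i| = 0\}$). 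Passing $i \to \infty$ term by term will produce \eqref{E:V-first-variation-1}, and \eqref{E:V-first-variation-2} then follows as a direct consequence for tangential fields.

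The three terms are handled as follows. Each summand of $D_i(X)$ is pointwise bounded by $(n + \|\nabla X\|_{C^0}) |\xi_{\ep_i}(u_i)|$; choosing a cutoff $\eta \in C_c(U)$ with $\eta \equiv 1$ on $\spt X$, the vanishing of the discrepancy measure (Theorem \ref{T:xi-vanish}) gives $\int |\xi_{\ep_i}(u_i)| \eta \, dv_g \to |\xi|(\eta) = 0$, hence $D_i(X) \to 0$. The $L^1$-convergence $u_i \to u$ on $\Omega \cap U$ together with $\|u_i\|_\infty \leq C_0$ and $\lambda_i \to \lambda$ yields
\[
\lambda_i \int_{\Omega \cap U} u_i \Div_g X \, dv_g \longrightarrow \lambda \int_{\Omega \cap U} u \Div_g X \, dv_g.
\]
Finally, the boundary integral is estimated by
\[
\Big| \int_{\partial \Omega \cap U} (e_{\ep_i}(u_i) - \lambda_i u_i)(X \cdot \nu) \, d\mathcal{H}^{n-1} \Big| \leq \Big( \int_{\partial \Omega \cap \tilde{U}} e_{\ep_i}(u_i) \, d\mathcal{H}^{n-1} + \Lambda_0 C_0 \mathcal{H}^{n-1}(\partial \Omega \cap \tilde{U}) \Big) \sup_{\partial \Omega \cap U} |X \cdot \nu|,
\]
and Lemma \ref{L:local-boundary-energy} bounds the first factor uniformly in $i$ by a constant with exactly the dependencies listed in the statement. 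Combining these three limits with the BV divergence theorem
\[
\int_{\Omega \cap U} u \Div_g X \, dv_g = -\int_{\Omega \cap U} X \cdot Du + \int_{\partial \Omega \cap U} u^{tr}(X \cdot \nu) \, d\mathcal{H}^{n-1},
\]
together with the trivial bound $|u^{tr}| \leq 1$ (recall $u = \pm 1$ a.e.), yields \eqref{E:V-first-variation-1}.

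For the second statement, a tangential $X \in \mathfrak{X}^{tan}_c(U)$ satisfies $X \cdot \nu = 0$ on $\partial \Omega \cap U$, so both the boundary integral above and the trace term from integration by parts vanish identically, leaving $h_0 \delta V(X) = -\lambda \int_{\Omega \cap U} X \cdot Du$. Since $u = 2\mathbf{1}_{\{u=1\}} - 1$ a.e., the distributional derivative $Du = 2 D\mathbf{1}_{\{u=1\}}$ is concentrated on $M = \partial^* \{u=1\}$ and proportional to $\nu_M d\mathcal{H}^{n-1} \mres M$, from which \eqref{E:V-first-variation-2} follows. The main technical obstacle is the passage $D_i(X) \to 0$: the weak convergence $|\xi_{\ep_i}(u_i)| dv_g \to |\xi|$ of Radon measures on $U$ does not immediately imply $\int_{\spt X} |\xi_{\ep_i}(u_i)| \to 0$ since $\spt X$ need not be open, so the cutoff argument exploiting $\spt X \Subset U$ is essential; this is the point where Theorem \ref{T:xi-vanish} feeds decisively into the first-variation identity.
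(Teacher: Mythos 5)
Your proof is correct and follows essentially the same route as the paper's: start from the first-variation identity of \cite[Lemma 4.1]{Kagaya2018}, kill the discrepancy terms via Theorem \ref{T:xi-vanish}, pass to the limit in the $\lambda_i$-term, and control the boundary contribution with Lemma \ref{L:local-boundary-energy}. The only cosmetic difference is that you bound the trace term by the trivial $|u^{tr}|\leq 1$ (valid since $u\in BV\cap L^\infty$ with $|u|\leq 1$), whereas the paper invokes the boundedness of the BV trace operator---both yield a constant with the same dependencies.
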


\begin{remark}
If $\lambda=0$, then \eqref{E:V-first-variation-2} implies that $V$ is a free boundary stationary varifold (c.f. \cite[Definition 3.1]{Guang2020}).
\end{remark}

\begin{proof}
Let $X \in \mathfrak{X}_c(U)$ with $\spt X \subset \tilde{U}$. Note that $\delta V_i(X) \to \delta V(X)$ as $i \to \infty$. On the other hand, by the calculations in \cite[Lemma 4.1]{Kagaya2018}, we have
\begin{equation}
\label{E:rectifiable-1}
\begin{aligned}
h_0 \delta V_i(X) = & \int_{\Omega \cap \{|\nabla u_i| \neq 0\}} \nabla X \cdot \left( \frac{\nabla u_i}{|\nabla u_i|} \otimes \frac{\nabla u_i}{|\nabla u_i|} \right) \xi_i  - \int_{\Omega \cap \{|\nabla u_i| =0\}} (\nabla X \cdot I) \frac{W(u_i)}{\ep_i}   \\
&+ \lambda_i \int_{\Omega} u_i \Div_g X + \int_{\partial \Omega} \left( \frac{\ep_i |\nabla u_i|^2}{2} + \frac{W(u_i)}{\ep_i} -\lambda_i u_i \right) (X \cdot \nu) d \mathcal{H}^{n-1}.  
\end{aligned}
\end{equation}
Note that the first and second term of the (R.H.S.) above are both bounded by  $C(n)\|X\|_{C^1} \int_{\tilde{U} \cap \Omega} |\xi_i|$, which converges to $0$ as $i \to \infty$ by Theorem \ref{T:xi-vanish}. For the third term, since $\lambda_i \to \lambda$ and $u_i \to u \in BV(\Omega \cap U)$ a.e., by Dominated Convergence theorem we have
\[ \lim_{i \to \infty} \lambda_i \int_{\Omega} u_i \Div_g X = \lambda \int_{\Omega} u \Div_g X =  - \lambda \int_{\Omega} X \cdot D u  + \lambda \int_{\partial \Omega} Tu (X \cdot \nu) d\mathcal{H}^{n-1}\]
where $Tu \in L^1(\partial \Omega)$ is the trace of $u$ on $\partial \Omega$ (c.f. \cite[Theorem 5.6]{Evans2015}). Moreover, by the boundednesss of the trace operator $T$, we have
\[ \|Tu\|_{L^1(\partial \Omega \cap U)} \leq C(n,i_0,k_0,\kappa_0) \|u\|_{BV(\Omega \cap U)} \leq C(n,i_0,k_0,\kappa_0, E_0,W, dv_g(\Omega \cap U)).\]
Combining all the above and applying Lemma \ref{L:local-boundary-energy} for the last term gives \eqref{E:V-first-variation-1}. 
\end{proof}

\begin{theorem}[Rectifiability of limit varifold]
\label{T:V-rectifiable}
The limit varifold $V \in \bV_{n-1}(U)$ satisfies $\|V\|=\mu$ and is rectifiable. 
\end{theorem}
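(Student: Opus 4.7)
The strategy is to apply Allard's rectifiability theorem for general varifolds, which reduces matters to verifying three ingredients: (a) $\|V\| = \mu$; (b) $V$ has locally bounded first variation in $U$; and (c) the upper $(n-1)$-density of $\|V\|$ is positive $\|V\|$-almost everywhere. With those three in hand, rectifiability is automatic.

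For (a), I would observe that $\|V_i\| = \mu_i \mres \{|\nabla u_i| \neq 0\}$ by construction, so $\mu_i - \|V_i\| = (h_0 \ep_i)^{-1} W(u_i) \, dv_g \mres (\Omega \cap \{|\nabla u_i| = 0\})$. On $\{|\nabla u_i| = 0\}$ the discrepancy reduces to $\xi_i = -W(u_i)/\ep_i$, giving $W(u_i)/\ep_i \leq |\xi_i|$ pointwise. Theorem \ref{T:xi-vanish} then yields $|\xi_i| \to 0$ in $L^1_{loc}$, hence $\mu_i - \|V_i\| \to 0$ weakly, and combining with $\|V_i\| \to \|V\|$ gives $\|V\| = \mu$. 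For (b), Lemma \ref{L:V-1st-variation} immediately provides $|\delta V(X)| \leq C \|X\|_{L^\infty}$ for any $X \in \mathfrak{X}_c(U)$ with support in a fixed compact subset of $\tilde{U}$, since $|\lambda \int X \cdot Du|$ is bounded by $\|X\|_{L^\infty} \|Du\|(\Omega \cap U)$, which is finite by \eqref{E:u-BV}, and $\sup_{\partial \Omega \cap U} |X \cdot \nu| \leq \|X\|_{L^\infty}$.

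The substantive work lies in (c). At interior points $x \in \spt\mu \cap \Omega$, Proposition \ref{P:mu-density-bound} directly gives $\Theta^{n-1}(\mu, x) \geq \theta_1 > 0$. For $x \in \spt\mu \cap \partial\Omega$, I would invoke Lemma \ref{L:Hausdorff} to extract a subsequence $x_i \to x$ with $|u_i(x_i)| \leq \alpha$, and split into cases (after passing to further subsequences if necessary). If $x_i \in \partial\Omega$ for infinitely many $i$, Proposition \ref{P:density-lower-bound-3} applied at $x_i$ with $b = (1-\alpha)/2$ yields $I_{\ep_i, u_i}(r, x_i) \geq c_8$ for all $r \in [\ep_i, r_3]$, and passing to the limit using weak convergence $\mu_i \to \mu$ together with $x_i \to x$ produces $\mu(\overline{B_r(x)}) \geq c r^{n-1}$ for all small $r$. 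Otherwise $x_i \in \Omega$ with $d_i := \dist_g(x_i, \partial\Omega) \to 0$; letting $y_i \in \partial\Omega$ be the nearest boundary point, I would split further according to the ratio $d_i/\ep_i$. When $d_i/\ep_i \to 0$, the gradient bound of Lemma \ref{L:apriori-u-bound} forces $|u_i(y_i) - u_i(x_i)| \leq c_3 d_i/\ep_i \to 0$, so $|u_i(y_i)| \leq 1 - b$ for large $i$ and the problem reduces to the boundary sequence case at $y_i$. When $d_i \gtrsim \ep_i$, the interior version of Proposition \ref{P:density-lower-bound-3} applied at $x_i$ at scale $d_i/2$ combined with the inclusion $B_{d_i/2}(x_i) \subset B_{3d_i/2}(y_i)$ yields $I_{\ep_i, u_i}(3d_i/2, y_i) \geq c_8/3^{n-1}$, and I would then propagate this lower bound up to any fixed small scale $r$ by applying the boundary almost-monotonicity formula (Proposition \ref{P:almost-monotonicity}) at $y_i \in \partial\Omega$; for $r$ small enough that the error term $c_{10} r^{1/8}$ is dominated by $e^{-c_{10} r}(c_8/3^{n-1})/2$, this again produces $\mu(\overline{B_r(x)}) \geq c r^{n-1}$ after passing to the limit along $y_i \to x$. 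In all cases $\Theta^{*, n-1}(\mu, x) > 0$, so Allard's rectifiability theorem applies and $V$ is rectifiable. The principal obstacle is precisely this last density lower bound at boundary points, because Proposition \ref{P:density-lower-bound-3} is formulated only for centers strictly in the interior (with ball contained in $\Omega$) or exactly on $\partial \Omega$; interior sequences approaching the boundary must therefore be handled by the delicate case analysis above, which transfers the lower bound to a nearby boundary transition point before invoking the boundary monotonicity.
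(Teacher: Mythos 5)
Your steps (a) and (b) line up exactly with the paper: $\|V\| = \mu$ follows from the vanishing of the discrepancy on $\{|\nabla u_i|=0\}$ (where $|\xi_i| = W(u_i)/\ep_i$), and Lemma~\ref{L:V-1st-variation} gives locally bounded first variation. The substantial divergence is in step (c), the density lower bound at boundary points, where the paper does something quite different from you: it first establishes that $V$ is a \emph{free boundary varifold} with $H^T\in L^\infty$ (using the interior density lower bound to show $\sigma = \Hcal^{n-1}\mres M \ll \mu$ via Radon--Nikodym, so that $H^T = \frac{2\lambda}{h_0}(D_\mu\sigma)\nu_M$ is bounded), and then invokes the Grüter--Jost upper semi-continuity of the reflected density for free boundary varifolds with bounded mean curvature to propagate the \emph{interior} density lower bound \eqref{E:mu-density-bound} up to $\partial\Omega$. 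You attempt instead a direct case analysis on the approximating transition points $x_i$.

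Your case analysis has a genuine gap. Your two sub-cases for $x_i\in\Omega$ are \lq\lq$d_i/\ep_i\to 0$'' and \lq\lq$d_i\gtrsim\ep_i$''. In the first, the gradient transfer to $y_i\in\partial\Omega$ works because $|u_i(y_i)|\le\alpha + c_3 d_i/\ep_i \to \alpha < 1-b$. In the second, you want to propagate the lower bound $I_{\ep_i,u_i}(3d_i/2,y_i)\ge c_8/3^{n-1}$ up to a fixed scale via Proposition~\ref{P:almost-monotonicity}, but that proposition only applies for radii $\ge c_6\ep_i^{4/5}$, whereas $3d_i/2\gtrsim\ep_i$ gives no control at the scale $\ep_i^{4/5}\gg\ep_i$. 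Concretely: after passing to a subsequence, you may have $d_i/\ep_i\to c$ with $(1-\alpha)/c_3 \le c < \infty$, or $d_i/\ep_i\to\infty$ but $d_i/\ep_i^{4/5}\to 0$ (e.g.\ $d_i = \ep_i^{9/10}$). In all such cases the gradient transfer fails (it needs $c_3 d_i/\ep_i < 1-\alpha$) \emph{and} the almost-monotonicity propagation fails (it needs $3d_i/2 \ge c_6\ep_i^{4/5}$). To close the gap one would have to bridge scales from roughly $\ep_i$ to $\ep_i^{4/5}$ at $y_i$ using only the raw Lemma~\ref{L:boundary-mono-1} with the pointwise bound $\xi_\ep\le\ep^{-4/5}$; the resulting error over this interval is of order $c_6$, which is not small compared to the transferred initial bound $c_8/3^{n-1}$, and the constants do not match without reworking the argument from scratch. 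The paper avoids this entirely: the free boundary structure with $H^T\in L^\infty$ gives a genuine monotonicity formula for the reflected varifold in a full neighborhood of $\partial\Omega$ (with no scale-matching restriction), and upper semi-continuity of the reflected density is what delivers the boundary lower bound. This is the key ingredient your argument is missing.
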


\begin{proof}
Recall that $\|V_i\|=\mu_i \mres {\{|\nabla u_i| \neq 0\}}$ and since $h_0 \mu_i \mres {\{ |\nabla u_i|=0\}} \leq |\xi_i| dv_g \mres {\Omega \cap U}$ which converges to $0$ by Theorem \ref{T:xi-vanish}, we have $\|V_i\| \to \mu$ and thus $\|V\|=\mu$. From \eqref{E:V-first-variation-1}, we know that $V$ has locally bounded first variation in $U$. By Allard's Rectifiability Theorem \cite[Theorem 42.4]{Simon1983}, it remains to show that $\|V\|=\mu$ has a locally uniform density lower bound on $\spt \|V\|$.

We claim that $V$ has free boundary in $\partial \Omega \cap U$ as described in Section \ref{SS:varifolds}. Recall $M=\partial^* \{u=1\} \subset \Omega \cap U$. By the structure theorem for sets of finite perimeter (c.f. \cite[Theorem 5.15]{Evans2015}, $\|\partial \{u=1\}\| = \mathcal{H}^{n-1} \mres M$ which is a Radon measure on $U$ by \eqref{E:u-BV}. Consider the Radon measures $\mu=\|V\|$ and $\sigma:=\mathcal{H}^{n-1} \mres M$ on $U$, the density ratio lower bound \eqref{E:mu-density-bound} implies (c.f. \cite[Section 1.6]{Evans2015}) that $\sigma$ is absolutely continuous with respect to $\mu$. Together with Lemma \ref{L:V-1st-variation}, the Radon-Nikodym Theorem implies that
\[ \delta V(X)= -\frac{2\lambda}{h_0} \int_{\Omega \cap U} X \cdot \nu_M d\sigma = -\frac{2 \lambda}{h_0} \int_{\Omega \cap U} (X \cdot \nu_M) D_\mu \sigma d\mu \]
for any $X \in \mathfrak{X}_c^{tan}(U)$. Therefore, $V$ has free boundary in $\partial \Omega \cap U$ with $\|V\|$-measurable vector field 
\[ H^T(x):=\left\{ \begin{array}{cl}
\frac{2\lambda}{h_0} (D_\mu \sigma) \nu_M & \text{ for $x \in M$,}\\
0 & \text{ otherwise.} 
\end{array} \right.  \]
Note the the density lower bound in Proposition \ref{P:mu-density-bound} implies that $H^T \in L^\infty$.

To finish the proof, note that $\|V\|$ has a locally uniform density lower bound on $\spt \|V\| \cap \Omega \cap U$ by \eqref{E:mu-density-bound}. As $V$ has free boundary in $\partial \Omega \cap U$ with $H^T \in L^\infty$, the ``reflected'' density of $V$ is upper semi-continuous (c.f. \cite[Corollary 3.2]{Grueter1986}) and hence $\|V\|$ has a locally uniform density lower bound up to the boundary $\partial \Omega$. This completes the proof.
\end{proof}

We now give the proof of our main result, Theorem \ref{T:main}, modulo the integrality of the limit varifold $V$ which will be presented in the next section.

\begin{proof}[Proof of Theorem \ref{T:main}]
(1) follows directly from Theorem \ref{T:xi-vanish} and \ref{T:V-rectifiable}. (2) follows from Lemma \ref{L:Hausdorff}, Proposition \ref{P:mu-density-bound} and \ref{P:Hausdorff}. The proof of (3) will be given in Section \ref{S:integrality}.Finally, (4) follows from the proof of Theorem \ref{T:V-rectifiable} and that $\Theta(x)=(D_\mu \sigma(x))^{-1}$. The rest then follows from \cite[Proposition 3.2]{Edelen2020}.
\end{proof}

\section{Integrality of the limit varifold}
\label{S:integrality}

We will establish in this section that the limit varifold $V$ is integral in $U$, i.e. $V \in \bIV_{n-1}(U)$. Since $V$ is rectifiable in $U$ by Theorem \ref{T:V-rectifiable}, $V$ has a unique approximate tangent space $T$ at $\mathcal{H}^{n-1}$-a.e. $x \in \spt \|V\|$. In other words, for any such $x \in \spt \|V\|$ and any sequence of positive real numbers $r_i \to 0$, we have 
\[ (\Phi_{r_i})_\sharp V \to \Theta(x) |T|\]
as varifolds \footnote{To be more precise, one can first locally isometrically embed $(\tilde{M},g)$ into some Euclidean space $\mathbb{R}^L$ and then do rescalings in $\mathbb{R}^L$ (c.f. \cite{Allard1972}). Alternatively, one can also do the rescalings intrinsically using the exponential map (c.f. \cite{Scharrer2022}). Both are equivalent and thus we will be using either formulation.} Here, $(\Phi_{r_i})_\sharp V$ is the pushforward of the varifold $V$ by the map $\Phi_{r_i}:(\widetilde{M},g) \to (\widetilde{M}, r_i^{-2} g)$, $\Theta(x)$ is the $(n-1)$-dimensional density of $V$ at $x$ and $|T|$ is the unit density varifold associated to the hyperplane $T \subset T_x \widetilde{M} \cong \mathbb{R}^n$.

We shall assume $x \in \partial \Omega \cap U$ as the arguments for interior points are similar (see also \cite{Hutchinson2000,Tonegawa2002,Tonegawa2005} in the Euclidean setting). Fix any $r_i \to 0$. We use $\tilde{g}_i:=r_i^{-2} g$ to denote the rescaled metric on $\widetilde{M}$. Note that $(\overline{\Omega} \cap U,x,\tilde{g}_i) \to (\mathbb{R}^n_+,0,g_E)$ smoothly in the sense of pointed Cheeger-Gromov convergence. Here $\mathbb{R}^n_+:=\{(x_1,\cdots,x_n) \in \mathbb{R}^n \; : \; x_n \geq 0 \}$ is the upper half-space equipped with the standard Euclidean metric $g_E$. Since $V_i \to V$, after passing to a subsequence, we can further assume as $i \to \infty$
\begin{equation}
\label{E:intgeral-1}
(\Phi_{r_i})_\sharp V_i \to \Theta(x) |T| \quad \text{ and } \quad \ep_i r_i^{-1} \to 0.
\end{equation}
Let $\tilde{V}_i=(\Phi_{r_i})_\sharp V_i$, $\tilde{\ep}_i= \ep_i r_i^{-1}$ and $\tilde{\lambda}_i=r_i \lambda_i$. Then $u_i$ satisfies, with respect to the rescaled metric $\tilde{g}_i$,
 \begin{equation}
\label{E:1a-II}
-\tilde{\ep}_i \Delta_{\tilde{g}_i} u_i + \frac{W'(u_i)}{\tilde{\ep}_i}= \tilde{\lambda}_i \quad  \text{in $\Omega \cap U$},
\end{equation}
\begin{equation}
\label{E:1b-II}
\frac{\partial u_i}{\partial \nu}=0 \quad \text{on $\partial \Omega \cap U$}.
\end{equation}
Note that we have $\|u_i\|_{L^\infty} \leq C_0$, $|\tilde{\lambda}_i| \leq r_i \Lambda_0 \to 0$. Moreover, by Proposition \ref{P:density-upper-bound}, for any fixed $R>0$, we have for all sufficiently large $i$ 
\[ (E_{\tilde{\ep}_i} \mres (B_R(x),\tilde{g}_i))(u_i) = \omega_{n-1} R^{n-1} I_{\ep_i,u_i}(r_i R,x) \leq \omega_{n-1} R^{n-1}c_9 (E_0+1). \]
Therefore, all the results obtained in previous sections can be applied (with slight modifications for a convergent sequence of varying metrics). Observe that $\tilde{V}_i$ is the associated varifold of $u_i$ with respect to the rescaled metric $\tilde{g}_i$ and $\tilde{V}_i \to \tilde{V}:=\Theta(x) |T|$. Since $\tilde{\lambda}_i \to 0$, by Lemma \ref{L:V-1st-variation}, $\tilde{V}_i \to \tilde{V}:=\Theta(x) |T|$ is a stationary varifold with free boundary in $\mathbb{R}^{n-1}:=\partial \mathbb{R}^n_+$. As $\tilde{V}$ is supported in $\mathbb{R}^n_+$ and $T$ is a hyperplane, we must have $T=\{x_n=0\}$.

For our convenience, by a suitable choice of coordinates, we can assume $\overline{\Omega} = \mathbb{R}^n_+$ is equipped with a metric $\tilde{g}_i$ converging locally smoothly to the Euclidean metric $g_E$ and $U=B_4(0)$ is the Euclidean ball of radius $4$ centered at the origin. All the balls and distances will be taken with respect to $g_E$ from now on. As in Section \ref{S:monotonicity}, we denote throughout this section $u_i$, $\tilde{\lambda}_i$, $\tilde{\ep}_i$, $\tilde{g}_i$ by $u$, $\tilde{\lambda}$, $\tilde{\ep}$, $\tilde{g}$. We use $T:\mathbb{R}^n \to \mathbb{R}^{n-1}$ to denote the orthogonal projection $T(x_1,\cdots,x_n)=(x_1,\cdots,x_{n-1})$ with respect to $g_E$ and $T^\perp:\mathbb{R}^n \to \mathbb{R}$ to denote the map $T^\perp (x_1,\cdots,x_n)=x_n$. We also define $\nu=(\nu_1,\cdots,\nu_n)$ to be the unit normal to the level sets of $u$ as
\[ \nu:=\left\{ \begin{array}{cl}
\frac{\nabla u}{|\nabla u|} & \text{ when $|\nabla u| \neq 0$,} \\
0 & \text{ when $|\nabla u|=0$.}
\end{array} \right. \]
We also denote
\[ e_{\tilde{\ep}}:=\frac{\tilde{\ep} |\nabla u|^2}{2} + \frac{W(u)}{\tilde{\ep}}, \quad \xi_{\tilde{\ep}}:=\frac{\tilde{\ep} |\nabla u|^2}{2} - \frac{W(u)}{\tilde{\ep}}.\]

We first establish a proposition which shows that the smallness of the discrepancy and tilt excess imply the solution $u$ is close in $\tilde{\ep}$-scale to the one-dimensional heteroclinic solution near the transition region which must be rather far away from the boundary.

\begin{proposition}[Sheets separation]
\label{P:sheet-separation}
For any given $s, b \in (0,1)$, there exist constants
\[ \eta_1=\eta_1(s,b,W) \in (0,1),\]
\[ L=L(s,b,W)>1\] 
such that the following holds: for any $0 < \eta < \eta_1$, suppose we have
\begin{itemize}
\item[(i)] $y \in \mathbb{R}^n_+$ with $T(y)=0$ and $|u(y)| \leq 1-b$;
\item[(ii)]  $u$ satisfies \eqref{E:1a-II} on $B_{4\tilde{\ep} L}(y) \cap \mathbb{R}^n_+$ and \eqref{E:1b-II} along $B_{4\tilde{\ep} L}(y) \cap \partial \mathbb{R}^n_+$ with $\tilde{\ep} \in (0,1)$, $|\tilde{\lambda}| \leq \eta$, $\|\tilde{g} - g_E\|_{C^2(B_{4\tilde{\ep}L}(y))} \leq \eta$ and $|\xi_{\tilde{\ep}}| \leq \eta \tilde{\ep}^{-1}$ on $B_{4\tilde{\ep} L}(y) \cap \mathbb{R}^n_+$;
\item[(iii)] 
\[ \int_{B_{4\tilde{\ep} L}(y) \cap \mathbb{R}^n_+} |\xi_{\tilde{\ep}}| + (1-(\nu_n)^2) \tilde{\ep} |\nabla u|^2 \leq \eta (4\tilde{\ep} L)^{n-1},\]
\end{itemize}
then $B_{3\tilde{\ep}L}(y) \cap \partial \mathbb{R}^n_+=\emptyset$, $T^{-1}(0) \cap \{ x \in B_{3 \tilde{\ep} L}(y) :  u(x)=u(y)\}=\{y\}$ and
\[ |I_{\tilde{\ep},u}(\tilde{\ep}L,y) -h_0 | \leq s. \]
\end{proposition}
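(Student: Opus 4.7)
The plan is to argue by contradiction via an $\tilde\ep$-scale blow-up. Suppose the proposition is false: there exist $s_0, b_0 \in (0,1)$ and sequences $\eta_k \to 0$, $L_k \to \infty$, $(u_k, \tilde\ep_k, \tilde g_k, \tilde\lambda_k, y_k)$ satisfying (i)--(iii) with $(\eta, L) = (\eta_k, L_k)$, for which at least one of the conclusions fails for every $k$. Writing $y_k = y_{k,n} e_n$ and $h_k := y_{k,n}/\tilde\ep_k \geq 0$, define the blow-up $v_k(x) := u_k(y_k + \tilde\ep_k x)$ on the rescaled half-space $\Omega_k := \{x_n \geq -h_k\}$; the pullback metric $\bar g_k(x) := \tilde g_k(y_k + \tilde\ep_k x)$ converges in $C^2_{\mathrm{loc}}$ to $g_E$, and a direct computation shows
\[
-\Delta_{\bar g_k} v_k + W'(v_k) = \tilde\ep_k \tilde\lambda_k \quad \text{on } B_{4L_k}(0) \cap \Omega_k,
\]
with Neumann condition $\partial v_k/\partial \nu_{\bar g_k} = 0$ on $\{x_n = -h_k\}$, together with uniform $C^0, C^1$ bounds inherited from Lemmas \ref{L:u-C0-bound}--\ref{L:apriori-u-bound}.

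\textbf{Blow-up limit and one-dimensional reduction.} Up to a subsequence either $h_k \to \infty$ (\emph{interior case}, $\Omega_\infty = \mathbb{R}^n$) or $h_k \to h_\infty \in [0,\infty)$ (\emph{boundary case}, $\Omega_\infty = \{x_n \geq -h_\infty\}$), and Schauder theory yields $v_k \to v_\infty$ in $C^2_{\mathrm{loc}}(\Omega_\infty)$, with $v_\infty$ solving $-\Delta v_\infty + W'(v_\infty) = 0$ on $\Omega_\infty$, the Neumann condition on $\partial \Omega_\infty$ in the boundary case, and $|v_\infty(0)| \leq 1 - b_0$. The change of variables recasts hypotheses (ii) and (iii) as
\[
|\tilde\xi(v_k)| \leq \eta_k \text{ pointwise}, \qquad \int_{B_{4L_k}(0) \cap \Omega_k} \Big( |\tilde\xi(v_k)| + \sum_{i<n} (\partial_i v_k)^2 \Big) dv_{\bar g_k} \leq C \eta_k L_k^{n-1},
\]
where $\tilde\xi(v) := \tfrac12 |\nabla v|^2 - W(v)$. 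The pointwise bound passes to the limit and enforces the Modica identity $|\nabla v_\infty|^2 = 2 W(v_\infty)$ on $\Omega_\infty$, while the integral bound applied on any fixed ball forces $\partial_i v_\infty \equiv 0$ for $i < n$. Hence $v_\infty(x) = f(x_n)$ with $(f')^2 = 2 W(f)$.

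\textbf{Excluding the boundary case and identifying the profile.} In the boundary case, Neumann at $x_n = -h_\infty$ forces $f'(-h_\infty) = 0$, so $W(f(-h_\infty)) = 0$ and $f(-h_\infty) = \pm 1$; ODE uniqueness for $f'' = W'(f)$ with Cauchy data $(\pm 1, 0)$ then yields $f \equiv \pm 1$, contradicting $|f(0)| \leq 1 - b_0 < 1$. Thus only the interior case can occur, proving conclusion (a) for large $k$. In the interior case $f$ is a translate of the heteroclinic profile $H$ connecting $\pm 1$, i.e.\ $f(t) = H(t - c_0)$ with $|c_0|$ bounded in terms of $b_0$. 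Since $H$ is strictly monotone on its transition zone and $H(\pm\infty) = \pm 1$, the $C^1_{\mathrm{loc}}$ convergence of $v_k(\cdot \, e_n)$ together with $|v_k(0)| \leq 1-b_0$ shows that $v_k(t e_n) = v_k(0)$ holds only at $t = 0$ on the interval $[-3L_k, 3L_k] \cap [-h_k, \infty)$ for $k$ large, yielding conclusion (b). For (c), the change of variables gives
\[
I_{\tilde\ep_k, u_k}(\tilde\ep_k L_k, y_k) = \frac{1}{\omega_{n-1} L_k^{n-1}} \int_{B_{L_k}(0) \cap \Omega_k} \Big( \tfrac12 |\nabla v_k|^2_{\bar g_k} + W(v_k) \Big) dv_{\bar g_k},
\]
and Fubini together with the exponential decay of $|H'|^2 = 2 W(H)$ at $\pm \infty$ shows this converges to $\int_{\mathbb{R}} |H'(t)|^2 dt = h_0$ as $k \to \infty$, contradicting the failure of (c).

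\textbf{Main obstacle.} The crux of the argument is producing a blow-up limit that is simultaneously one-dimensional \emph{and} satisfies the Modica identity $|\nabla v_\infty|^2 = 2 W(v_\infty)$: the integral tilt excess in (iii) forces the 1D structure (after integrating over arbitrary compact subsets), while the \emph{pointwise} discrepancy bound in (ii) upgrades the identity to hold everywhere on the limit rather than only in integrated form. Once this reduction is available, the Neumann rigidity for the first-order ODE $(f')^2 = 2W(f)$ excludes the boundary case automatically, yielding the $3\tilde\ep L$-separation from $\partial \mathbb{R}^n_+$ as a free byproduct.
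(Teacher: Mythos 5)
Your approach is a genuine alternative to the paper's: you argue by a qualitative blow-up/compactness contradiction, whereas the paper invokes the quantitative estimate of Hutchinson--Tonegawa (Prop.\ 5.6 there, with the weakened discrepancy hypothesis from Tonegawa's 2002 paper) together with up-to-boundary Agmon--Douglis--Nirenberg $C^2$ estimates, and then reads off the boundary avoidance from $\partial u/\partial x_n\neq 0$ combined with the Neumann condition. Your core mechanism is essentially the same as the paper's, just run in the limit rather than at finite $\tilde{\ep}$: the pointwise discrepancy bound upgrades to the Modica identity, the integral tilt excess gives the one-dimensional reduction, and Neumann-plus-monotonicity of the heteroclinic forbids the boundary from entering $B_{3\tilde{\ep}L}(y)$.

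There is, however, a genuine gap in the quantifier structure. You negate the proposition by taking sequences with $\eta_k\to 0$ \emph{and} $L_k\to\infty$ without relating their rates, and this breaks two steps. First, the rescaled integral hypothesis becomes
\[
\int_{B_{4L_k}(0)\cap\Omega_k}\Bigl(|\tilde\xi(v_k)| + \textstyle\sum_{i<n}(\partial_i v_k)^2\Bigr)\,dv_{\bar g_k}\;\leq\;C\,\eta_k L_k^{n-1},
\]
and to force $\partial_i v_\infty\equiv 0$ by restricting this to a fixed ball, you need $\eta_k L_k^{n-1}\to 0$; with the natural instantiation $\eta_k=1/k$, $L_k=k$ this is $k^{n-2}\to\infty$ for $n\geq 3$. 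Second, and more seriously, the conclusion $T^{-1}(0)\cap\{x\in B_{3\tilde{\ep}L}(y):u(x)=u(y)\}=\{y\}$ rescales to a statement on the \emph{growing} ball $B_{3L_k}(0)$, and $C^1_{\mathrm{loc}}$ convergence to the heteroclinic only controls $v_k$ on compacta; you have no mechanism ruling out spurious intersections with the level set far from the origin, where $|H'|$ is already exponentially small. Similarly, the energy-ratio conclusion requires the heteroclinic's energy on $B_{L_k}$ to be close to $h_0$ uniformly along the sequence, which is not delivered by the compactness argument alone.

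The correct structure fixes $L=L(s,b,W)$ \emph{first}: choose $L$ so large that the one-dimensional heteroclinic $H$ satisfies $|(\omega_{n-1}L^{n-1})^{-1}\int_{B_L(0)}|H'(x_n-c)|^2\,dx - h_0|\leq s/2$ for all $|c|\leq M(b)$, and $H'(t-c)\neq 0$ for $|t|\leq 4L$. Then run the contradiction with this fixed $L$ and $\eta_k\to 0$; the blow-up $v_k$ now lives on the fixed domain $B_{4L}(0)\cap\Omega_k$, the tilt excess vanishes outright, the Neumann rigidity forces $h_\infty\geq 4L$ (so $h_k>3L$ eventually, which is conclusion (a)), $C^1$ convergence on $B_{4L}(0)$ gives $\partial_n v_k>0$ there (conclusion (b)), and the energy ratio converges with margin (conclusion (c)). With that repair, your argument is a clean and self-contained alternative to the paper's appeal to Hutchinson--Tonegawa and Agmon--Douglis--Nirenberg.
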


\begin{proof}
Exactly the same proof (the weaker hypothesis $|\xi_{\tilde{\ep}}| \leq \eta \tilde{\ep}^{-1}$ is sufficient as pointed out in \cite[Proposition 4.8]{Tonegawa2002}) of \cite[Proposition 5.6]{Hutchinson2000} goes through using the up-to-the-boundary $C^2$-estimates in \cite{Agmon1959} and shows that $u$ is in $\tilde{\ep}$-scale $C^1$-close to the unique one-dimensional finite-energy heteroclinic solution $q(x_n)$ satisfying $q(|y|)=u(y)$. In particular, this implies $\frac{\partial u}{\partial x_n} \neq 0$ on $B_{3 \tilde{\ep}L}(y) \cap \mathbb{R}^n_+$ from which the assertions follow by \eqref{E:1b-II}.
\end{proof}

The second proposition deals with decomposing the limit varifold $V$ into stacked single-layered interfaces. Note that by Proposition \ref{P:sheet-separation}, we only need to consider the case where the transition regions stay sufficiently away from the boundary and hence the interior arguments in \cite{Hutchinson2000,Tonegawa2002,Tonegawa2005} go through with trivial modifications.

\begin{proposition}[Stacking argument]
\label{P:stacking}
For any given $R,\tilde{E}_0>0$, $s \in (0,1)$, $N \in \mathbb{N}$, there exists a constant
\[ \eta_2=\eta_2(R, C_0, \tilde{E}_0,s,N,n,W)>0\]
such that the following holds: for any $0 < \eta < \eta_2$, suppose we have
\begin{itemize}
\item[(i)] $Y \subset \mathbb{R}^n_+$ has no more than $N+1$ elements, $T(y)=0$ for all $y \in Y$, diam $Y \leq \eta R$ and there exists $a>0$ such that $|y-z| >3a$ for all $y,z \in Y$; moreover, $U_{3a}(Y) \cap \partial \mathbb{R}^n_+ = \emptyset$;
\item[(ii)] $u$ satisfies \eqref{E:1a-II} on $U_R(Y) \cap \mathbb{R}^n_+$ and \eqref{E:1b-II} along $U_R(Y) \cap \partial \mathbb{R}^n_+$ with $\|u\|_{L^\infty} \leq C_0$, $\tilde{\ep} <\eta$, $\|\tilde{g} - g_E\|_{C^2(U_R(Y))} \leq \eta$ and $|\tilde{\lambda}| \leq \eta$; moreover, for each $y \in Y$, we have
\[ \int_a^R \frac{d\tau}{\tau^n} \int_{B_\tau(y) \cap \mathbb{R}^n_+} \xi^+_{\tilde{\ep}} \leq \eta R;\]
\item[(iii)] for each $y \in Y$ and $r \in [a,R]$, we have 
\[ \int_{B_r(y) \cap \mathbb{R}^n_+} |\xi_{\tilde{\ep}}| + (1-(\nu_n)^2) \tilde{\ep} |\nabla u|^2 \leq \eta r^{n-1}, \]
\[ \int_{B_r(y) \cap \mathbb{R}^n_+} \tilde{\ep} |\nabla u|^2 \leq \tilde{E}_0 r^{n-1};\]
\end{itemize}
then we must have
\[ \sum_{y \in Y} \frac{1}{\omega_{n-1}a^{n-1}} \int_{B_a(y)} e_{\tilde{\ep}} \leq s+ \frac{1+s}{\omega_{n-1}R^{n-1}} \int_{U_R(Y)} e_{\tilde{\ep}}.\]
\end{proposition}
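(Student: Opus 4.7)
The proof adapts the interior stacking argument of \cite[Proposition 5.7]{Hutchinson2000} (see also \cite[Proposition 4.9]{Tonegawa2002}, \cite[Proposition 5.1]{Tonegawa2005}). Hypothesis (i) ensures that the balls $\{B_a(y)\}_{y \in Y}$ are pairwise disjoint and contained strictly inside $\mathbb{R}^n_+$. For balls of larger radius that may intersect $\partial \mathbb{R}^n_+$, I would Schwarz-reflect $u$ across $\partial \mathbb{R}^n_+$ using the Neumann condition \eqref{E:1b-II}: the reflected function solves an Allen-Cahn equation (with the same $W$) on a neighborhood of $\bar Y := Y \cup Y^{\mathrm{refl}}$ in $\mathbb{R}^n$ with respect to a metric $C^2$-close to the Euclidean one (controlled by $\eta$). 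By the symmetry of the reflected configuration, the desired estimate for $Y$ is equivalent to the corresponding interior stacking estimate for $\bar Y$ in $\mathbb{R}^n$, which is what I now outline.

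The key is to show that under hypotheses (ii) and (iii), the measure $e_{\tilde\ep}\,dx$ on $U_R(Y)$ is quantitatively close to a stack of $N+1$ nearly-horizontal Radon measures of the form $h_0 \mathcal{H}^{n-1}|_{\{x_n = y_n^{(i)}\}}$. The tilt-excess bound $\int(1-\nu_n^2)\tilde\ep|\nabla u|^2 \leq \eta r^{n-1}$ forces $\nabla u$ to be $L^2$-close to a vertical direction, and combined with the smallness of $|\xi_{\tilde\ep}|$ (saturation of the pointwise inequality $\tfrac{\tilde\ep|\nabla u|^2}{2}+\tfrac{W(u)}{\tilde\ep} \geq |\nabla u|\sqrt{2W(u)}$) this makes $u$ close to a stacked one-dimensional heteroclinic solution $q$ with transitions near the heights $y_n^{(1)},\dots,y_n^{(N+1)}$. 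Because $T(y) = 0$ for every $y \in Y$, these heights are aligned along a common vertical axis, and a coarea/slicing argument in the $e_n$-direction is expected to yield
\[
\frac{1}{\omega_{n-1} a^{n-1}}\int_{B_a(y)} e_{\tilde\ep} \leq h_0 + C\eta^\kappa \quad \text{for each } y \in Y,
\]
\[
\frac{1}{\omega_{n-1} R^{n-1}}\int_{U_R(Y)} e_{\tilde\ep} \geq (N+1)h_0 - C\eta^\kappa,
\]
for some exponent $\kappa = \kappa(n) > 0$ and constant $C = C(R,N,\tilde{E}_0,n,C_0,W)$. The first estimate combines the sheet-separation result (Proposition \ref{P:sheet-separation}) applied at scale $\tilde\ep L$ with the interior almost monotonicity formula (Lemma \ref{L:interior-mono-1}) and the discrepancy bound in (ii) to interpolate up to scale $a$, using that each $B_a(y)$ meets only the single interface through $y$. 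The second uses coarea in $e_n$: for a.e.\ height $t$ close to some $y_n^{(i)}$, the slice $u(\cdot,t)$ transits between $+1$ and $-1$ on a set of $\mathcal{H}^{n-1}$-measure close to $\omega_{n-1} R^{n-1}$ (with the near-graphicality from the tilt-excess controlling the $(n{-}1)$-area of the slice inside $U_R(Y)$), and each of the $N+1$ heights contributes one such sheet.

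Summing the first estimate over $Y$ and combining it with the second then yields
\[
\sum_{y\in Y}\frac{1}{\omega_{n-1} a^{n-1}}\int_{B_a(y)} e_{\tilde\ep} \;\leq\; (N+1)h_0 + C(N+1)\eta^\kappa \;\leq\; \frac{1}{\omega_{n-1} R^{n-1}}\int_{U_R(Y)} e_{\tilde\ep} + 2C(N+1)\eta^\kappa,
\]
and taking $\eta$ sufficiently small (depending on $s,N,R,\tilde{E}_0, C_0, n, W$) absorbs the error into $s$, giving the desired conclusion since $F(R) + s \leq s + (1+s)F(R)$. The main obstacle is making the two displayed estimates quantitative at the required level of sharpness: the naive application of monotonicity at each $y$ overcounts $\int_{U_R(Y)}e_{\tilde\ep}$ by a factor comparable to $N+1$ (since $\mathrm{diam}(Y) \leq \eta R$ forces each $B_R(y)$ to essentially coincide with $U_R(Y)$), and only the stacked-sheet approximation based on tilt-excess, small discrepancy, and vertical alignment $T(y)=0$ removes this overcounting and gives the tight multiplicative factor $1+s$ independent of $N$.
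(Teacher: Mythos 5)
Your proposal takes a genuinely different route from the paper's, and the central step of that route has a real gap.

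The paper's proof (as indicated in the text surrounding the statement) adapts Hutchinson--Tonegawa Lemma 5.4: one plugs a radial vector field, cut off both radially and in the $x_n$-direction, into the Pohozaev identity and obtains a single monotonicity-type inequality directly comparing $\int_{B_a(y)} e_{\tilde\ep}$ with the energy of $u$ on the slab $\{|x_n - y_n| < \tau\}\cap U_R(Y)$; summing over the pairwise disjoint slabs then yields the stated conclusion in one stroke. The Dini-type hypothesis (ii), $\int_a^R \tau^{-n}\int_{B_\tau(y)}\xi^+_{\tilde\ep}\,d\tau \leq \eta R$, is precisely the quantity that controls the error term in this truncated monotonicity formula, and the boundary contributions are controlled via the boundary monotonicity arguments of Lemma \ref{L:density-lower-bound-1} and Proposition \ref{P:density-lower-bound-3}, not by reflection. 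Your proposal, by contrast, tries to make both sides individually sharp, namely $\int_{B_a(y)} e_{\tilde\ep} \lesssim h_0\omega_{n-1}a^{n-1}$ per sheet and $\int_{U_R(Y)} e_{\tilde\ep} \gtrsim |Y| h_0 \omega_{n-1}R^{n-1}$ in total, and then compare.

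The gaps in your route are as follows. (a) The per-sheet upper bound is supplied by Proposition \ref{P:sheet-separation} only at the scale $\tilde\ep L$, not at the scale $a$, and the proposition allows $a$ to be larger. Almost-monotonicity gives a \emph{lower} bound on $I_{\tilde\ep,u}(\cdot,y)$ as the radius increases, so it cannot be used to ``interpolate up to scale $a$'' in the direction you need. (b) The claimed lower bound $\int_{U_R(Y)} e_{\tilde\ep} \geq |Y|h_0\omega_{n-1}R^{n-1} - C\eta^\kappa R^{n-1}$ requires the level sets of $u$ to extend as near-graphs over the full horizontal disc of radius $R$ with the correct multiplicity; the $L^2$-smallness of the tilt excess and the $BV$ lower bound $e_{\tilde\ep} \geq |\nabla(\Phi\circ u)|$ (with $\Phi(s)=\int_0^s\sqrt{2W}$) do not give this without a further non-degeneracy argument. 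This is the heart of your proposal, and it is exactly what is \emph{not} needed if one follows the direct slab-monotonicity route: the conclusion of the proposition is just an upper bound on the left-hand side in terms of the right-hand side, and it holds even when neither quantity is close to $|Y|h_0$. (c) Your coarea description conflates horizontal slices $\{x_n=t\}$ (on which $u$ is roughly constant near the transition) with level sets $\{u=t\}$; as written, the statement ``the slice $u(\cdot,t)$ transits between $+1$ and $-1$ on a set of $\mathcal{H}^{n-1}$-measure close to $\omega_{n-1}R^{n-1}$'' does not parse. The reflection idea to handle $\partial\mathbb{R}^n_+$ is a reasonable alternative to the paper's approach, but note that the reflected function solves the equation only for the reflected metric (which need not be smooth across the boundary) and is a priori only $C^{1,1}$, so it is not a drop-in substitute.
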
 

\begin{proof}
The proof of Proposition \ref{P:stacking} requires a technical lemma similar to \cite[Lemma 5.4]{Hutchinson2000} except that we have a weaker hypothesis on the upper bound of the discrepancy function (c.f. \cite[Proposition 4.7]{Tonegawa2002}). We then argue exactly the same way by plugging in the radial vector field centered at a point in $Y$ which is cutoff radially and along the $x_n$-direction (c.f. the proof of \cite[Lemma 5.4]{Hutchinson2000}) and control the error terms in the monotonicity formula as in Lemma \ref{L:density-lower-bound-1} and Proposition \ref{P:density-lower-bound-3}. 
\end{proof}

The third proposition says that the energy is uniformly small in $\tilde{\ep}$ outside the transition region (c.f. \cite[Proposition 5.1]{Hutchinson2000} and \cite[Proposition 4.5]{Tonegawa2005}).

\begin{proposition}[Uniform smallness of energy outside transition region]
\label{P:small-energy-estimate}
Suppose that $\|\tilde{g} - g_E\|_{C^2(B_{4}(0) \cap \mathbb{R}^n_+)} \leq 1$ and $|\tilde{\lambda}|\leq 1$. Given $s>0$, there exist positive constants 
\[ \ep_{12}=\ep_{12}(s, C_0, n, W)>0,\]
\[ b=b(s, C_0, n, W, E_0) \in (0,1)\]
such that for any $0 <\tilde{\ep} < \epsilon_{12}$, we have 
\[ \int_{B_3(0) \cap \mathbb{R}^n_+ \cap \{|u| \geq 1-b\}} \frac{W(u)}{\tilde{\ep}} \leq s.\]
\end{proposition}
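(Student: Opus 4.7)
The plan is, by the $u \mapsto -u$ symmetry inherent in the double-well potential $W$, to focus on bounding $\int_{B_3 \cap \mathbb{R}^n_+ \cap \{u \geq 1-b\}} W(u)/\tilde\ep$; the region $\{u \leq -(1-b)\}$ is handled identically. The approach adapts the interior arguments of \cite{Hutchinson2000} Proposition 5.1 and \cite{Tonegawa2005} Proposition 4.5 to the half-space setting, combining the refined $C^0$ estimate of Lemma \ref{L:refine-estimate} away from the transition region $T := \{|u| < \alpha\}$ with a PDE energy estimate near $T$. The Neumann condition \eqref{E:1b-II} will play a key role in suppressing boundary terms during integration by parts.

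First, I fix $\eta \in (1/2, 1)$ and decompose $\{u \geq 1-b\} \cap B_3 \cap \mathbb{R}^n_+$ into a good set $G$ (points $x$ for which $u \geq \alpha$ throughout $B_{3\tilde\ep^{1-\eta}}(x) \cap \mathbb{R}^n_+$) and the bad set $B$ (its complement). On $G$, Lemma \ref{L:refine-estimate} yields $|u - 1| \leq c_4 \tilde\ep^\eta$, hence $W(u)/\tilde\ep \leq C \tilde\ep^{2\eta - 1}$ pointwise; since $2\eta - 1 > 0$ and $\mathrm{Vol}(B_3\cap\mathbb{R}^n_+)$ is bounded, the integral over $G$ is $O(\tilde\ep^{2\eta - 1}) \to 0$ as $\tilde\ep \to 0$, and can be made $\leq s/2$ by choosing $\ep_{12}$ small. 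On $B$, I set $v := 1 - u$ and exploit the hypotheses on $W$: integrating $W'' \geq \kappa$ from $u \in [\alpha, 1]$ up to $1$ and using $W'(1) = 0$ gives $-W'(u) \geq \kappa v$ on $\{u \in [\alpha,1]\}$, and symmetrically $W'(u) \geq -\kappa v$ for $u \in [1, 1+c_2\tilde\ep]$. Combined with \eqref{E:1a-II}, this yields the one-sided elliptic inequality
\[
-\tilde\ep^2 \Delta v + \kappa v \leq \tilde\ep |\tilde\lambda| \leq \tilde\ep \qquad \text{on } \{u \geq \alpha\}.
\]

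To convert this inequality into smallness of $\int_B W(u)/\tilde\ep$, I multiply by a nonnegative test function of the form $\psi(v) \phi^2$, where $\phi \in C_c^\infty(B_4)$ is a spatial cutoff with $\phi \equiv 1$ on $B_3$ and $\psi$ is a smooth function supported in $[0, b]$ (a smooth approximation of $v \mapsto v \cdot \mathbf{1}_{[0,b]}(v)$), and integrate. Because $\psi(v)$ is supported on $\{0 \leq v \leq b\} \subset \{u \geq \alpha\}$ (provided $b \leq 1-\alpha$), the inequality is valid throughout the support of the multiplier. Boundary terms on $\partial B_4$ vanish since $\phi = 0$ there, and those on $\partial \mathbb{R}^n_+$ vanish via the Neumann condition $\partial v/\partial \nu = -\partial u/\partial \nu = 0$. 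Integration by parts, absorption of cross-gradient terms by Cauchy-Schwarz, and use of the uniform energy bound $\int_{B_4 \cap \mathbb{R}^n_+} e_{\tilde\ep} \leq C(E_0 + 1)$ (inherited from Proposition \ref{P:density-upper-bound} at the original scale) produce an $L^2$-estimate on $v$ over $\{0 \leq v \leq b\}$. The Taylor expansion $W(u) = \tfrac{1}{2}W''(1) v^2 + O(v^3)$ near $u = 1$, together with the pointwise bound $|v| \leq b + c_2 \tilde\ep$ on $B$, transfers this to a bound on $\int_B W(u)/\tilde\ep$.

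The main technical obstacle is extracting the precise $b^2$-dependence: a naive multiplier like $\phi^2$ or $v\phi^2$ without the localization to $\{v \leq b\}$ yields only an $O(1)$ estimate, since the energy of $v$ over the full transition layer is comparable to the $(n-1)$-dimensional interface area. The correct scaling — which matches the heuristic that the $1$D heteroclinic profile satisfies $\int_{\{v \leq b\}} v^2\, dx \sim b^2\, \tilde\ep/(2\sqrt\kappa)$ per unit interface area by the exponential decay $v \sim e^{-d\sqrt\kappa/\tilde\ep}$ — requires the sharp localization provided by $\psi(v)$ supported on $[0, b]$. Choosing $b = b(s, C_0, n, W, E_0)$ so that the resulting constant times $b^2$ is $\leq s/2$, and then $\ep_{12} = \ep_{12}(s, C_0, n, W)$ small enough that the good-set contribution is $\leq s/2$, completes the proof.
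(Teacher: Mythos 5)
Your multiplier-based approach is genuinely different from the paper's route, which instead deduces the proposition from Lemma~\ref{L:small-energy-estimate-1} (a comparison/Harnack argument with a mixed Neumann--Dirichlet auxiliary function $\psi_R$ showing that $\{1-b\leq|u|\leq 1-\tilde\ep^\beta\}$ lies in a thin neighborhood of $Z_\alpha$) and Lemma~\ref{L:small-energy-estimate-2} (a covering bound on $\mathcal{L}^n(\{\dist(\cdot,Z_\alpha)<r\})$). Your PDE multiplier idea could sidestep both lemmas and is in principle cleaner. However, there is a genuine gap: taking $\psi$ to be a smooth approximation of $v\mapsto v\,\mathbf{1}_{[0,b]}(v)$ \emph{supported in} $[0,b]$ is fatal. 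After integrating by parts one picks up $\tilde\ep^2\int (\psi')^-|\nabla v|^2\phi^2$, and if $\psi$ drops from roughly $b$ to $0$ on $[b(1-\delta),b]$ then $\sup(\psi')^-\sim b/\delta$; using $\tilde\ep\int|\nabla v|^2\lesssim E_0'$ this error is of size $\sim(b/\delta)\,\tilde\ep\,E_0'$, which for any natural choice $\delta\sim b$ becomes $O(\tilde\ep E_0')$ --- \emph{not small in $b$} --- and so cannot give the required $\leq s$ after dividing by $\tilde\ep$ and choosing $b$ small. The fix is to take $\psi(v)=\min(v,b)$ on $[0,(1-\alpha)/2]$, extended smoothly to vanish at $v=1-\alpha$, so that $\|\psi'\|_\infty\lesssim 1$ and $(\psi')^-$ is supported on $\{v\geq(1-\alpha)/2\}$ with $|\psi'|\lesssim b$ there; this gives $\kappa\int v\psi(v)\phi^2\lesssim\tilde\ep b(1+E_0')$, hence $\int_{\{v\leq b\}\cap B_3}W(u)/\tilde\ep\lesssim b(1+E_0')$, which is $O(b)$ --- weaker than your stated $b^2$ heuristic, but still sufficient for the proposition. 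Two further remarks: (i) your good/bad set decomposition via Lemma~\ref{L:refine-estimate} is redundant once the multiplier is set up correctly, since the multiplier already controls the entire set $\{0\leq v\leq b\}$ regardless of whether a point is "good" or "bad"; and (ii) your claim that $-\tilde\ep^2\Delta v+\kappa v\leq\tilde\ep$ holds on all of $\{u\geq\alpha\}$ is not quite right on $\{u>1\}$ (there one only gets the reverse one-sided bound), but this is harmless because $\psi(v)=0$ for $v<0$, and the residual contribution of $\{u>1\}$ is directly $O(\tilde\ep)$ since $u\leq 1+c_2\tilde\ep$ by Lemma~\ref{L:u-C0-bound}.
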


The proof of Proposition \ref{P:small-energy-estimate} requires two lemma as in \cite{Hutchinson2000}. Define
\begin{equation}
\label{E:Z_alpha}
Z_\alpha := \left\{ x \in B_4(0) \cap \mathbb{R}^n_+ \; : \; |u(x)| \leq  \frac{1+\alpha}{2}  \right\}.
\end{equation}

\begin{lemma}
\label{L:small-energy-estimate-1}
Suppose that $\|\tilde{g} - g_E\|_{C^2(B_{4}(0) \cap \mathbb{R}^n_+)} \leq 1$ and $|\tilde{\lambda}|\leq 1$. There exist positive constants
\[ \ep_{13}=\ep_{13}(W)>0,\]
\[ c_{13}=c_{13}(n,W)>0,\]
such that for any $0<\tilde{\ep}<\ep_{13}$, $x \in B_3(0) \cap \mathbb{R}^n_+$ with $|u(x)| <1-\tilde{\ep}^\beta$ for some $\beta$ satisfying
\[ \frac{1}{c_{13} |\log \tilde{\ep}|} < \beta < \min \left\{ \frac{2}{3}, \frac{1}{c_{13} \tilde{\ep} |\log \tilde{\ep}|} \right\}, \]
then we have 
\[ \dist (x,Z_\alpha) \leq c_{13} \beta \tilde{\ep} |\log \tilde{\ep}|.\]
\end{lemma}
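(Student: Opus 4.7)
The plan is to argue by contradiction via a maximum principle with an exponential radial barrier, adapting the interior argument of \cite[Lemma 5.3]{Hutchinson2000} to the half-space and Neumann setting. Suppose $x \in B_3(0) \cap \mathbb{R}^n_+$ satisfies $|u(x)| < 1 - \tilde{\ep}^\beta$ but $\dist(x, Z_\alpha) > r := c_{13}\beta\tilde{\ep}|\log\tilde{\ep}|$. The upper constraint on $\beta$ ensures $r < 1$, so $B_r(x) \cap \mathbb{R}^n_+ \subset B_4(0) \cap \mathbb{R}^n_+$. Without loss of generality $u(x) > 0$, and by continuity $u > (1+\alpha)/2$ throughout $B_r(x) \cap \mathbb{R}^n_+$. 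Setting $v := 1 - u$ and Taylor-expanding $W'$ about $u = 1$, the equation \eqref{E:1a-II} yields
\[ \tilde{\ep}^2 \Delta_{\tilde{g}} v - a(y)\, v = \tilde{\ep}\tilde{\lambda}, \qquad a(y) := \int_0^1 W''(1 - tv(y))\, dt \geq \kappa, \]
where the pointwise lower bound uses $1 - tv(y) \geq (1+\alpha)/2 > \alpha$ together with the non-degeneracy assumption on $W$ in Section \ref{SS:double-well}. The Neumann condition \eqref{E:1b-II} transfers to $\partial v/\partial y_n = 0$ on $B_r(x) \cap \partial \mathbb{R}^n_+$.

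The key tool is a radial supersolution: for any $y_0 \in \overline{\mathbb{R}^n_+}$, $\rho > 0$, and $M \geq 0$, define
\[ w(y) := (M + c_2\tilde{\ep})\, \frac{\cosh(\mu|y - y_0|/\tilde{\ep})}{\cosh(\mu\rho/\tilde{\ep})} + \frac{2\tilde{\ep}}{\kappa}, \qquad \mu := \sqrt{\kappa/(2n)}. \]
The choice $n\mu^2 = \kappa/2$ absorbs the multi-dimensional correction term $\tilde{\ep}(n-1)\mu\tanh(\mu s/\tilde{\ep})/s \leq (n-1)\mu^2$ in the Euclidean radial Laplacian; combined with $\|\tilde{g}-g_E\|_{C^2} \leq 1$ and $|\tilde{\lambda}|\leq 1$, a direct computation gives $\tilde{\ep}^2 \Delta_{\tilde{g}} w - \kappa w \leq \tilde{\ep}\tilde{\lambda}$. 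If $v \leq M$ on $\partial B_\rho(y_0) \cap \overline{\mathbb{R}^n_+}$, then $v - w$ satisfies $(\tilde{\ep}^2\Delta_{\tilde{g}} - a)(v-w) \geq 0$ on $B_\rho(y_0) \cap \mathbb{R}^n_+$ and is nonpositive on the outer sphere. On the Neumann portion $B_\rho(y_0) \cap \partial \mathbb{R}^n_+$, a direct computation gives $\partial w/\partial y_n(y) = w'(|y-y_0|)(-(y_0)_n)/|y-y_0| \leq 0$ whenever $(y_0)_n \geq 0$, so combined with $\partial v/\partial y_n = 0$ we have $\partial(v-w)/\partial\nu = \partial w/\partial y_n \leq 0$ there, and Hopf's lemma rules out an interior maximum of $v - w$ on that portion. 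The strong maximum principle then forces $v \leq w$ throughout, whence
\[ v(y_0) \leq 2(M + c_2\tilde{\ep})\, e^{-\mu\rho/\tilde{\ep}} + \frac{2\tilde{\ep}}{\kappa}. \]

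Iterate this estimate along a chain of concentric balls centered at $x$, with step size $\rho_0 = K\tilde{\ep}/\mu$ for $K = K(n,W)$ large enough that a single step produces a factor $1/2$ decrease in the upper bound (modulo an additive $O(\tilde{\ep})$). Starting from $M_0 := (1-\alpha)/2$ on $B_r(x) \cap \mathbb{R}^n_+$, after $k := \lceil \beta|\log\tilde{\ep}|/\log 2\rceil$ iterations one reaches
\[ v \leq 2^{-k}M_0 + C_*\tilde{\ep} \leq M_0\tilde{\ep}^\beta + C_*\tilde{\ep} < \tilde{\ep}^\beta \]
on $B_{r - k\rho_0}(x) \cap \mathbb{R}^n_+$, where the last inequality uses $M_0 < 1/2$ and $\beta < 2/3$ (so $\tilde{\ep}^{1-\beta} \to 0$ dominates the additive term for $\tilde{\ep}$ small). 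This contradicts $v(x) > \tilde{\ep}^\beta$, provided $k\rho_0 \leq r$; the lower bound $\beta > 1/(c_{13}|\log\tilde{\ep}|)$ absorbs the ceiling contribution, and the overall constraint reduces to $c_{13} \geq C(n, W)$ for a computable constant. The main obstacle beyond the interior Hutchinson-Tonegawa argument is the Neumann boundary condition, which is handled seamlessly via the Hopf computation above, working precisely because the barrier centers $y_0$ lie in $\overline{\mathbb{R}^n_+}$, so that $\partial w/\partial y_n$ has the correct sign on $\{y_n = 0\}$.
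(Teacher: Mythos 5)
Your proof takes a genuinely different route from the paper's. The paper, following \cite[Lemma 5.2]{Hutchinson2000}, first constructs a barrier $\psi_R$ solving a mixed boundary value problem (Neumann on the flat part, Dirichlet on the sphere) by Fredholm alternative, normalizes it at the center, and then invokes Harnack inequality and Schauder estimates to prove that $\psi_R$ grows exponentially; the maximum principle is applied once with this non-explicit barrier. You instead use an explicit $\cosh$-supersolution and observe the key structural fact that when the barrier center $y_0$ has $(y_0)_n\geq 0$, the radial barrier automatically has $\partial w/\partial y_n\leq 0$ on $\{y_n=0\}$, so Hopf's lemma excludes boundary maxima; you then iterate over a chain of concentric balls. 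This buys you a more elementary and self-contained argument that avoids solving the auxiliary PDE, the unique-continuation step, and the Harnack/Schauder machinery; what it costs is a discrete iteration with bookkeeping, versus the paper's single barrier comparison.

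Two points in your proposal need tightening. First, the choice $\mu=\sqrt{\kappa/(2n)}$ is tuned to absorb only the Euclidean correction $(n-1)w'/r$, but under the hypothesis $\|\tilde g-g_E\|_{C^2}\leq 1$ the perturbed Laplacian satisfies only $\tilde\ep^2\Delta_{\tilde g}w\leq C(n)\mu^2 w+O(\tilde\ep\mu)w$ with a dimensional constant $C(n)>n$ coming from the spread in $\tilde g^{ij}$ and the Christoffel symbols; you must shrink $\mu$ to $\sqrt{\kappa/(2C(n))}$ so the barrier remains a supersolution for the full admissible class of metrics, not just metrics close to Euclidean. Second, your claim that ``the lower bound $\beta>1/(c_{13}|\log\tilde\ep|)$ absorbs the ceiling contribution'' is not correct as stated: the hypothesis only gives $\beta|\log\tilde\ep|>1/c_{13}$, which for large $c_{13}$ does not prevent $\beta|\log\tilde\ep|$ from being small, and in that regime $k\rho_0$ can exceed $r$ regardless of how $c_{13}$ is chosen. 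This is a genuine gap, though a fixable one: note that if $\beta|\log\tilde\ep|\leq -\log\left(\frac{1-\alpha}{2}\right)$, then $\tilde\ep^\beta\geq(1-\alpha)/2$, so $|u(x)|<1-\tilde\ep^\beta\leq\frac{1+\alpha}{2}$ forces $x\in Z_\alpha$ and the conclusion is trivial; and in the complementary regime $\beta|\log\tilde\ep|$ is bounded below by a constant depending only on $W$, which makes the ceiling a bounded multiplicative loss and reduces the constraint to $c_{13}\geq C(n,W)$, as you intended.
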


\begin{proof}
We argue exactly the same way as in \cite[Lemma 5.2]{Hutchinson2000} except that we have to modify the function $\psi$ suitably for our problem. By a translation, we assume $x=0$ instead and denote $\tilde{\Omega}=\tilde{\ep}^{-1} (\Omega-x)$. By Fredholm alternative, for any $0 <R < \tilde{\ep}^{-1}$, there exists a unique solution $\varphi=\varphi_R$ to the following mixed boundary value problem:
\begin{equation}
\left\{ \begin{array}{cl}
\Delta \varphi = \frac{\kappa}{4} \varphi & \text{on $B_R(0) \cap \tilde{\Omega}$}, \\
\frac{\partial \varphi}{\partial \nu} = 0 & \text{along $B_R(0) \cap \partial \tilde{\Omega}$},\\
\varphi=1 & \text{along $\partial B_R(0) \cap  \tilde{\Omega}$}.
\end{array} \right. 
\end{equation}
Note that $\varphi_R$ is continuous on $\overline{B_R(0) \cap \tilde{\Omega}}$ up to the corner $\partial B_R(0) \cap \partial \tilde{\Omega}$. Moreover, $\varphi_R>0$ by the strong maximum principle and unique continuation property (see e.g. \cite[Section 1.8]{Lieberman2013}). We can then define $\psi_R=(\varphi_R(0))^{-1} \varphi_R$ which satisfies
\begin{equation}
\left\{ \begin{array}{cl}
\Delta \psi_R = \frac{\kappa}{4} \psi_R & \text{on $B_R(0) \cap\tilde{\Omega}$}, \\
\frac{\partial \psi_R}{\partial \nu} = 0 & \text{along $B_R(0) \cap \partial \tilde{\Omega}$},\\
\psi_R(0)=1. & 
\end{array} \right. 
\end{equation}
Using Harnack inequality and Schauder estimates \cite{Agmon1959}, there exists a constant $c_{13}>0$ depending only on $n$ and $\kappa$ such that $\psi_R(x)>e^{|x|/c_{13}}$ for all $|x| \geq 1$. Using $\psi_R$ in place of $\psi$, the rest of the proof follows from the arguments in \cite[Lemma 5.2]{Hutchinson2000}.
\end{proof}

\begin{lemma}
\label{L:small-energy-estimate-2}
Suppose that $\|\tilde{g} - g_E\|_{C^2(B_{4}(0) \cap \mathbb{R}^n_+)} \leq 1$ and $|\tilde{\lambda}|\leq 1$. There exist positive constants 
\[ \ep_{14}=\ep_{14}(C_0,W,n)>0,\]
\[ c_{14}=c_{14}(C_0,W,n,E_0)>0,\]
such that for any $0 < \tilde{\ep} < \ep_{14}$ and $\tilde{\ep} < r < r_3$, we have 
\[ \mathcal{L}^n\left(\{x \in B_3(0) \cap \mathbb{R}^n_+ \; : \; \dist (x,Z_\alpha) < r\}   \right) \leq c_{14} r.\]
\end{lemma}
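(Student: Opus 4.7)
The plan is to adapt the interior Vitali-covering argument of Hutchinson--Tonegawa \cite[Lemma 5.3]{Hutchinson2000}, splitting off a trivially controlled slab near $\partial \mathbb{R}^n_+$ to bypass the main technical obstacle: the density-ratio lower bound of Proposition \ref{P:density-lower-bound-3} is unavailable at interior points whose distance to $\partial \mathbb{R}^n_+$ is smaller than $r$ (the interior clause requires $r < \dist_g(x, \partial \mathbb{R}^n_+)$, and such points are not on $\partial \mathbb{R}^n_+$ either). The first step I would take is to decompose
\[
A_r := \{x \in B_3(0) \cap \mathbb{R}^n_+ \; : \; \dist(x, Z_\alpha) < r\} = A_r^{\mathrm{near}} \cup A_r^{\mathrm{far}},
\]
where $A_r^{\mathrm{near}} := A_r \cap U_{2r}(\partial \mathbb{R}^n_+)$ and $A_r^{\mathrm{far}} := A_r \setminus A_r^{\mathrm{near}}$. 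Since $\tilde{g}$ is close to $g_E$, the set $A_r^{\mathrm{near}}$ sits in a slab of thickness $O(r)$ over a base of bounded $(n-1)$-measure, giving $\mathcal{L}^n(A_r^{\mathrm{near}}) \leq C r$ with no PDE input.

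The key structural observation for the remainder is that any $x \in A_r^{\mathrm{far}}$ admits some $z \in Z_\alpha$ with $|z-x| < r$, and then $\dist(z, \partial \mathbb{R}^n_+) \geq \dist(x, \partial \mathbb{R}^n_+) - r \geq r$. So $A_r^{\mathrm{far}} \subset U_r(Z_\alpha^{\mathrm{far}})$, where
\[
Z_\alpha^{\mathrm{far}} := Z_\alpha \cap \{y : \dist(y, \partial \mathbb{R}^n_+) \geq r\}.
\]
This is the whole point of the split: on $Z_\alpha^{\mathrm{far}}$ the interior density lower bound is now available at scale $r$.

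Next, setting $b := (1-\alpha)/2 \in (0, 1-\alpha)$ and $\rho := \max\{\tilde{\ep}, r/5\}$, I would apply Vitali to extract a finite disjoint family $\{B_\rho(z_j)\}_{j=1}^N$ with centers $z_j \in Z_\alpha^{\mathrm{far}} \cap \overline{B_{3+r}(0)}$, such that $Z_\alpha^{\mathrm{far}} \cap \overline{B_{3+r}(0)} \subset \bigcup_{j} B_{5\rho}(z_j)$. Each $z_j$ satisfies $|u(z_j)| \leq (1+\alpha)/2 = 1-b$ and $\dist(z_j, \partial \mathbb{R}^n_+) \geq r \geq \rho$, so the interior case of Proposition \ref{P:density-lower-bound-3} applies at scale $\rho \in [\tilde{\ep}, r_3]$ and yields $\int_{B_\rho(z_j) \cap \mathbb{R}^n_+} e_{\tilde{\ep}}(u) \geq c_8 \omega_{n-1} \rho^{n-1}$. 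Summing over $j$ using disjointness, and using that $\int_{B_4(0) \cap \mathbb{R}^n_+} e_{\tilde{\ep}}(u) \leq C(E_0)$ (which, in the blow-up setting described at the start of Section \ref{S:integrality}, follows from Proposition \ref{P:density-upper-bound} applied at the origin at radius $\sim 4$), one obtains $N \leq C(E_0)\, \rho^{-(n-1)}$.

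Since $r \leq 5\rho$, we have $A_r^{\mathrm{far}} \subset \bigcup_j B_{10\rho}(z_j)$, so the volume estimate
\[
\mathcal{L}^n(A_r^{\mathrm{far}}) \leq N\, \omega_n\, (10\rho)^n \leq C(E_0, n)\, \rho \leq C(E_0, n)\, r
\]
follows from $\rho = \max\{\tilde{\ep}, r/5\} \leq r$. Combined with the slab bound for $A_r^{\mathrm{near}}$, this gives the claimed $\mathcal{L}^n(A_r) \leq c_{14}\, r$. The only conceptually nontrivial step is the first: recognizing that the points of $Z_\alpha$ close to $\partial \mathbb{R}^n_+$ need not be handled by a density bound at all, because the corresponding portion of $A_r$ is already contained in a slab of measure linear in $r$. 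Once this is in place, the rest is a routine Vitali / energy-budget computation mirroring the interior case.
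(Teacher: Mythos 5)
Your proof is correct and takes essentially the same route the paper intends: the terse proof in the paper refers to the covering argument of Hutchinson--Tonegawa together with ``the arguments in the proof of Proposition~\ref{P:xi-integral-bound},'' and those arguments are exactly your decomposition---a boundary slab $U_{2r}(\partial\mathbb{R}^n_+)$ disposed of by a crude volume bound, followed by a Vitali covering of the remaining transition points (which now sit at distance $\geq r$ from $\partial\mathbb{R}^n_+$) at scale $\rho=\max\{\tilde\ep,r/5\}$ using the interior clause of Proposition~\ref{P:density-lower-bound-3} with $b=(1-\alpha)/2$. The only cosmetic caveat is that you should either assume $r_3<1/2$ (harmless) or run the energy budget on a slightly larger ball so that the disjoint Vitali balls $B_\rho(z_j)$ stay inside the region where the rescaled energy bound from Proposition~\ref{P:density-upper-bound} is available.
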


\begin{proof}
The proof follows the same covering arguments from \cite[Lemma 5.3]{Hutchinson2000} using Proposition \ref{P:density-lower-bound-3} (with $b=\frac{1-\alpha}{2}$) and the arguments in the proof of Proposition \ref{P:xi-integral-bound}.
\end{proof}

\begin{proof}[Proof of Proposition \ref{P:small-energy-estimate}]
It follows exactly the same proof in \cite[Proposition 5.1]{Hutchinson2000} using Lemma \ref{L:small-energy-estimate-1} and \ref{L:small-energy-estimate-2}. 
\end{proof}

\begin{proof}[Proof of Theorem \ref{T:main} (continued)]
We now finish the proof of Theorem \ref{T:main} by showing that $V \in \bIV_{n-1}(U)$. We use the same notations as in the beginning of this section with $x=0 \in \partial \Omega \cap U$. Recall that 
\begin{equation}
\label{E:intgeral-2}
\tilde{V}_i:=(\Phi_{r_i})_\sharp V_i \to \Theta_0 |T| \quad \text{ and } \quad \tilde{\ep_i}:=\ep_i r_i^{-1} \to 0.
\end{equation}
Our goal here is to prove that $\Theta_0 \in \mathbb{N}$.

First of all, by Theorem \ref{T:xi-vanish} and \eqref{E:intgeral-2}, we have
\begin{equation}
\label{E:tilt-excess-estimate}
\lim_{i \to \infty} \int_{B_4(0) \cap \mathbb{R}^n_+}  |\xi_{\tilde{\ep}_i}| = \lim_{i \to \infty} \int_{B_4(0) \cap \mathbb{R}^n_+} (1-\nu_n^2) \tilde{\ep_i} |\nabla u_i|^2 =0.
\end{equation}
Let $N \in \mathbb{N}$ be the smallest positive integer such that $N>\Theta_0$. Note that $\|\tilde{g}_i - g_E\|_{C^2(B_R(0) \cap \mathbb{R}^n_+)} \to 0$ and $|\tilde{\lambda}_i| \to 0$ as $i \to \infty$. Fix an arbitrary small $s>0$, by Proposition \ref{P:small-energy-estimate} and \eqref{E:tilt-excess-estimate}, we can choose some $b>0$ such that for all sufficiently large $i$, we have
\begin{equation}
\label{E:energy-estimate} 
\int_{B_3(0) \cap \mathbb{R}^n_+ \cap \{|u_i| \geq 1-b\}} \left( \frac{\tilde{\ep}_i |\nabla u_i|^2}{2} + \frac{W(u_i)}{\tilde{\ep}_i} \right) \leq s.
\end{equation}
With these choices of $s$ and $b$, we choose $0<\eta < \min\{\eta_1,\eta_2\}$ and $L>1$ as in Proposition \ref{P:sheet-separation} and \ref{P:stacking} (with $R=1$ and $\tilde{E}_0=c_9 (E_0+1)$). We then define as in \cite{Hutchinson2000} the good set $G_i$ consisting of all $x \in B_2(0) \cap \mathbb{R}^n_+ \cap \{|u_i| \leq 1-b\}$ such that
\[ \int_{B_r(x) \cap \mathbb{R}^n_+} |\xi_{\tilde{\ep}_i}| + (1-\nu_n^2) \tilde{\ep_i} |\nabla u_i|^2 \leq \eta r^{n-1} \quad \textrm{for all $r \in [4 \tilde{\ep}_i L, 1]$.}\]
By Besicovitch covering theorem and \eqref{E:tilt-excess-estimate}, we have
\[ \lim_{i \to \infty} \|\tilde{V}_i\|(B_2(0) \cap \mathbb{R}^n_+ \cap \{|u_i| \leq 1-b\} \setminus G_i) =0 \]
and 
\[ \lim_{i \to \infty} \mathcal{H}^{n-1}(T(B_2(0) \cap \mathbb{R}^n_+ \cap \{|u_i| \leq 1-b\} \setminus G_i))=0.\]
On the other hand, by \eqref{E:intgeral-2} and Proposition \ref{P:density-lower-bound-3}, we obtain
\begin{equation}
\label{E:Hausdorff-convergence}
\lim_{i \to \infty} \dist(G_i,T)=0.
\end{equation}

Now, for any $x \in B_1 (0) \cap \partial \mathbb{R}^n_+$ and $|t| \leq 1-b$, we let 
\[Y:=T^{-1}(x) \cap G_i \cap \{u_i=t\} \qquad \textrm{ and } \qquad a:=L\tilde{\ep}_i.\]
We would like to apply Proposition \ref{P:stacking} to this $Y$ for all sufficiently large $i$. Note that $\|\tilde{g}_i - g_E\| \to 0$ and $\tilde{\lambda}_i \to 0$ as $i \to \infty$. By Proposition \ref{P:sheet-separation} and \eqref{E:Hausdorff-convergence}, all assumptions in (i) of Proposition \ref{P:stacking} are satisfied (except perhaps the cardinality of $Y$) for large $i$. Noting how the constants in Section \ref{S:monotonicity} scale with respect to the blow-up radii $r_i \to 0$ (especially Proposition \ref{P:xi-bound} and \ref{P:xi-integral-bound}), (ii) is also satisfied for large $i$. Condition (iii) is essentially the definition of $G_i$, together with Proposition \ref{P:density-upper-bound}. Note that $Y$ cannot contain more than $N-1$ elements since otherwise we will obtain by Proposition \ref{P:sheet-separation} and \ref{P:stacking} that
\[ N \leq (N+1)s + (1+s)(\Theta_0+s) \]
which is a contradiction when $s$ is sufficiently small (depending only on $N$). The rest of the argument is the same as \cite[Theorem 1]{Hutchinson2000} and we must have $\Theta_0=N-1 \in \mathbb{N}$.
\end{proof}

\bibliographystyle{amsplain}
\bibliography{references}

\end{document}